\documentclass[preprint,12pt]{elsarticle}
\usepackage{graphicx} % Required for inserting images

\usepackage[utf8]{inputenc}
\usepackage[english]{babel}
\usepackage{graphicx}
\usepackage{amssymb}
\usepackage{amsmath}
\usepackage{tikz}
\usetikzlibrary{decorations.pathreplacing} %for the braces
\usepackage{commath}
\usepackage{upgreek}
\usepackage[hidelinks]{hyperref}
\usepackage{color}
\usepackage{wrapfig}
\usepackage{graphicx,wrapfig,lipsum}
\graphicspath{ {./images/} }

\usepackage{amsthm}
\newtheorem{theorem}{Theorem}[section]

\newtheorem{lemma}[theorem]{Lemma}
\newtheorem{definition}{Definition}[section]

\usepackage[a4paper, total={6.5 in, 9.25 in}]{geometry}

\date{\vspace{-5ex}}

\journal{Nonlinear Analysis}

\begin{document}

\begin{frontmatter}

\title{Weak global solvability of a doubly degenerate \\parabolic-elliptic nutrient taxis system}

\author[ucm,upb]{\corref{cor1}Federico Herrero-Hervás}
\ead{fedher01@ucm.es}
\cortext[cor1]{Corresponding author}

\affiliation[ucm]{organization={Instituto de Matemática Interdisciplinar, Departamento de Análisis Matemático y Matemática Aplicada, Universidad Complutense de Madrid},
            city={Madrid},
            postcode={28040},
            country={Spain}}

\affiliation[upb]{organization={Institut für Mathematik, Universität Paderborn},
            city={Paderborn},
            postcode={33098},
            country={Germany}}

%\affiliation[icai]{organization={Departamento de Matemática Aplicada, ICAI, Universidad Pontificia de Comillas},
 %           city={Madrid},
  %          postcode={28015},
   %         country={Spain}}

\begin{abstract}
This work studies the following doubly degenerate parabolic-elliptic nutrient taxis system
$$
\begin{cases}
    u_t = (uvu_x)_x -(u^2 vv_x)_x + uv, \\[1.5 ex]
    \hspace{0.2 cm}0 = v_{xx} - uv + f(x,t),
\end{cases}
$$
in a bounded interval $\Omega \subset \mathbb{R}$, under no-flux boundary conditions and nonnegative initial value $u(x,0) = u_0(x) \geq 0$, where $f(x,t) \geq 0$ is known external supply of the nutrient.

It is shown that for any nonnegative $u_0 \in W^{1,\infty}(\Omega)$ and $f \in C^1\big(\bar{\Omega} \times [0,\infty) \big)$, $f \not \equiv 0$, a global weak solution of the problem can be constructed by means of a regularization approach. 

The core of the analysis lies on a Harnack-type inequality for the second that allows us to overcome the lack of uniform coercivity. Together with time regularity properties, we obtain relative compactness through a combination of the Arzelà-Ascoli theorem and the Aubin-Lions lemma.
\end{abstract}

\begin{keyword}
chemotaxis, degenerate diffusion, weak solutions
\end{keyword}

\end{frontmatter}

\section{Introduction}
Aside from random diffusion, bacterial motion can also be influenced by the presence of chemical stimuli. The effect of chemotaxis, this directed migration process in response to chemical cues, has been observed across a wide range of living systems. In bacterial populations, chemotactic responses can, under suitable conditions, give rise to intricate forms of aggregation \cite{fractal2, fractal1, BudreneBerg}. 
\\\\
From a mathematical perspective, the Keller-Segel system \cite{KS1, KS2}, which in its minimal formulation is given by
\begin{equation}\label{0.1}
      \begin{cases}
         \displaystyle \frac{\partial u}{\partial t} = \Delta u - \chi \nabla \cdot (u\nabla v), \\[1.75ex]
       \tau \displaystyle \frac{\partial v}{\partial t} = \Delta v +u - v,
    \end{cases}
\end{equation}
models such interactions between a biological species, whose population density is denoted by $u$, and a chemical agent inducing chemotaxis, with a concentration denoted by $v$. Parameter $\chi >0$ measures the chemotaxis sensitivity and $\tau\geq 0$ the diffusive timescale of the chemical. Extensive effort has been devoted to the study of system \eqref{0.1} and its closely related variants (see for instance the reviews \cite{Bellomo, Hillen, Horstmann}), both for $\tau=0$, corresponding to the assumption that the chemical diffuses significantly faster than the individuals of the species, as well as for $\tau>0$, when both diffusion processes occur on comparable timescales.
\\\\
In the case of system \eqref{0.1}, it is assumed that the substance is self-produced by the species, as a means of self-aggregation, which can lead to blow-up of solutions \cite{mah}. However, for other scenarios where the signal is consumed instead of produced ---for instance in the case of a nutrient-- comparatively less information is currently available. The corresponding mathematical formulation of such nutrient taxis process reads
\begin{equation}\label{0.2}
    \begin{cases}
         \displaystyle \frac{\partial u}{\partial t} = \Delta u - \chi\nabla \cdot (u \nabla v), \\[1.75ex]
       \tau \displaystyle \frac{\partial v}{\partial t} = \Delta v - uv.
    \end{cases}
\end{equation}
Global solutions to system \eqref{0.2} and similar variants are known to exist in two- and three-dimensional domains in fully parabolic \cite{Tao11, TaoWinkler12, Wu16, ZhangLi} and parabolic-elliptic cases \cite{TaoWinkler2019}. For this class of systems, diffusion typically dominates the long-time dynamics, resulting in stabilization toward constant steady states, even upon incorporating a growth term of the form $+uv$ to the first equation. This contrasts with experimental observations, for instance of \textit{Bacillus subtilis} colonies grown on agar plates, where complex branching patterns emerge in the presence of peptone, an attracting nutrient \cite{Fujikawa89, Fujikawa92, Fujikawa90}. Experimental information suggested that nutrient taxis plays a central role in pattern formation of such colonies, which however could not be captured by system \eqref{0.2}.
\\\\
Tied to these findings, substantial differences are found on the mathematical level when the diffusion and taxis mechanisms are considered to degenerate at small population and chemical levels. In particular, the system introduced in \cite{plaza13}, given by
\begin{equation}\label{0.3}
    \begin{cases}
         \displaystyle \frac{\partial u}{\partial t} = \nabla \cdot(u v \nabla u) - \chi \nabla \cdot (u^2 v \nabla v) + uv, \\[1.75ex]
       \displaystyle \frac{\partial v}{\partial t} = \Delta v - uv,
    \end{cases}
\end{equation}
has recently attracted significant interest. Numerical simulations carried out in \cite{plaza13} already show the emergence of branching patterns, similar to those observed in experimental studies. Particularly, the structural resemblance of these aggregates is enhanced by the taxis term in system \eqref{0.3} in contrast to the purely diffusive scenario (when $\chi =0$), which had already been studied from a numerical point of view in \cite{Kawasaki}.
\\\\
An analytical description of this process was first characterized in \cite{winkler1d}, where system \eqref{0.3} was studied on a one-dimensional domain, obtaining global weak solutions as the limit of a sequence of regularized problems. Importantly, these solutions are such that if $||v_0||_{L^1(\Omega)}$ is small enough, then 
\begin{equation}\label{0.4}
u \to u_\infty \not \equiv \text{constant} ~\text{ in } L^\infty(\Omega), ~\text{ and } ~ v\to 0 ~\text{ in } W^{1,\infty}(\Omega).   
\end{equation}
Hence, at least in nutrient-poor environments, system \eqref{0.3} is capable of generating spatially heterogeneous limits in the $u$ component, providing a paradigm for pattern formation in such scenarios. The key role played by the size of $v_0$ is directly linked to the degenerate nature of the diffusive and taxis terms. If the initial nutrient concentration is small enough, it is then rapidly consumed by the species, reaching negligible levels before diffusion homogenizes the population over the domain. 
\\\\
Since this first study of system \eqref{0.3}, many other results have become available. Still in dimension one, in \cite{li-wink} the assumptions made on the initial data made in \cite{winkler1d} were weakened. Particularly, if allowed for compactly supported $u_0$, a relevant case for applications. Aside, in \cite{HH-Cauchy} the corresponding one-dimensional Cauchy problem was proved to admit globally defined weak solutions upon sufficient regularity for $u_0$, $v_0$ and $v_{0x}$. On two-dimensional domains, weak global solvability was also proved in \cite{winkler2d}, again under appropriate smallness hypothesis for $v_0$. Global $L^\infty$ bounds were later obtained under such hypotheses in \cite{winkler-l-inf-2d}. This smallness requirement was, however, recently been removed in \cite{ZL1}, provided that $(u_0,v_0) \in W^{1, \infty}(\Omega)$ are such that $\int_\Omega \ln u_0 > - \infty$ and $v_0>0$, proving a convergence result similar to \eqref{0.4}, although with weak$^*$ convergence for $u$. Analogous convergence results have also been recently obtained in \cite{Duan} for a slight variation of system \eqref{0.3}.
\\\\
Further extensions include logistic dynamics for the bacterial species, which were first investigated for superquadratic degradation terms of the form $\lambda u - \mu u^\alpha$ for some $\alpha>2$ in \cite{logistic}, and more recently have been extended to the classical logistic case with $\alpha = 2$ in \cite{winkler-li-logist}. Moreover, in \cite{yuting} system \eqref{0.3} is extended to two-species, with corresponding logistic and competitive dynamics.
\\\\
These results provide a comprehensive description of the dynamics of system \eqref{0.3} and related variants when the diffusion timescales of the species and the nutrient are of comparable magnitude. In this work, however, we investigate the effects of a fast diffusive nutrient by approximating the second equation by its elliptic counterpart. We also include an external supply of the nutrient by means of a known source function $f(x,t) \geq 0$, considering the system posed over a bounded one-dimensional domain $\Omega \subset \mathbb{R}$. In this way, the problem reads
\begin{equation}\label{1.1}
    \begin{cases}
        \displaystyle \frac{\partial u}{\partial t} = \frac{\partial}{\partial x}\left(u v \frac{\partial u}{\partial x}\right) - \frac{\partial}{\partial x}\left(u^2 v \frac{\partial v}{\partial x}\right) + u v, & x\in \Omega, ~t>0, \\[2ex]
       \displaystyle \hspace{0.2 cm}0 \hspace{0.15 cm}=  \frac{\partial^2 v}{\partial x^2} - u v + f(x,t), & x\in \Omega, ~t>0, \\[1.75ex]
       \displaystyle uv\frac{\partial u}{\partial x} - u^2 v\frac{\partial v}{\partial x} = \frac{\partial v}{\partial x} = 0, & x\in \partial\Omega, ~t>0 \\[1.75ex]
       u(x,0) = u_0(x) \geq 0,   & x\in \Omega.
    \end{cases}
\end{equation}
In contrast to the fully parabolic version, several differences arise in the analysis of system \eqref{1.1}. In particular, the fact that $u$ may not be strictly positive over $\Omega$ results in the loss of coercivity in the elliptic equation. Moreover, in the parabolic version, an $L^\infty$ bound for $v$ could be readily obtained through a maximum principle, that no longer becomes applicable in this scenario.
\\\\
As in other works comprising system \eqref{0.3}, we consider regularized variants of system \eqref{1.1} by taking a parameter $\varepsilon \in (0,1)$ and solving the corresponding system with $u_\varepsilon(x,0) = u_0(x) + \varepsilon >0$. Our approach for the study of the second equation lies on a Harnack-type inequality that can be obtained thanks to the one-dimensional setting. Moreover, exploiting the fact that $\int_\Omega f = \int_\Omega u_\varepsilon v_\varepsilon$, time-dependent $L^\infty$ bounds can be obtained for $v_\varepsilon$.
\\\\
\textbf{Main Results:} \quad For our case, system \eqref{1.1} is considered under the following hypotheses
\begin{align}
    &u_0(x)\in W^{1,\infty}(\Omega)  ~\text{ is nonnegative,}\label{h1}\\[1.5 ex]
    &f(x,t) \in  C^1\big(\bar{\Omega} \times [0,\infty)\big), ~\text{is nonnegative, with} ~f \not \equiv 0. \label{h2}
\end{align} 
This allows us to prove the following main result. 
\begin{theorem}\label{t1}
    Assume \eqref{h1} and \eqref{h2} hold. Then, there exists a global weak solution $(u,v)$ to system \eqref{1.1} in the sense of Definition \ref{weak-sol} below, which moreover satisfies
    $$
      \begin{cases}
        u \in L^\infty_{\text{loc}}\big((0,\infty); L^p(\Omega)\big) ~\text{ for all } p \geq 1, \\[1.5 ex]
        v \in C^0_\text{loc}(\bar{\Omega} \times [0,\infty)\big) \cap L^\infty_\text{loc}\big((0,\infty); C^{1,\alpha}(\Omega)\big),
    \end{cases}
    $$
\end{theorem}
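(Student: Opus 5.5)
We construct the solution as a limit of solutions to regularized problems. For $\varepsilon\in(0,1)$ we consider \eqref{1.1} with initial datum $u_{0\varepsilon}:=u_0+\varepsilon$. To avoid full degeneracy we may also add $\varepsilon u_{\varepsilon xx}$ to the first equation, or replace $uv$ in the diffusion coefficient by $u(v+\varepsilon)$.

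\textbf{Step 1: local existence and positivity.} A standard fixed point argument gives a local classical solution of the regularized problem, together with an extensibility criterion. Given $u_\varepsilon>0$, the elliptic problem $-v_{xx}+u_\varepsilon v=f$ with Neumann data is uniquely solvable. By the strong maximum principle, since $f\not\equiv0$, we get $v_\varepsilon>0$. A comparison argument for the $u$-equation, whose reaction term $uv\ge 0$, then keeps $u_\varepsilon$ positive.

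\textbf{Step 2: $\varepsilon$-independent estimates.}
\begin{itemize}
\item Integrating the elliptic equation gives $\int_\Omega u_\varepsilon v_\varepsilon=\int_\Omega f$.
\item Consequently $\frac{d}{dt}\int_\Omega u_\varepsilon=\int_\Omega f$, so $\int_\Omega u_\varepsilon$ is locally bounded in time.
\item Since $\Omega$ is one-dimensional, $|v_{\varepsilon x}(x)|\le\int_\Omega(u_\varepsilon v_\varepsilon+f)\le 2\int_\Omega f$.
\item This yields a Harnack-type inequality $\max v_\varepsilon\le\min v_\varepsilon+C(T)$. Combined with $\min v_\varepsilon\int u_\varepsilon\le\int u_\varepsilon v_\varepsilon=\int f$, it gives an $L^\infty$ bound on $v_\varepsilon$ (in fact a $W^{1,\infty}$ bound) on $(0,T)$.
\end{itemize}

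\textbf{Step 3: $L^p$ bounds on $u_\varepsilon$ (main obstacle).} We test the first equation with $u_\varepsilon^{p-1}$. This produces the dissipation $\int u^{p-1}v u_x^2$, and the cross-diffusive term is handled by Young's inequality:
\[
\int u^{p}v u_xv_x\le \tfrac12\int u^{p-1}vu_x^2+C\int u^{p+1}v v_x^2 .
\]
The difficulty is that the remaining term has one extra power of $u$, and the dissipation degenerates where $v$ is small.

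The first attempt is to use the identity $-v_{xx}=f-uv$. The terms containing $v_x^2$ then become
\[
\int u^{p+1}v v_x^2 ,
\]
and integration by parts against $v_{xx}$, following the ideas of \cite{winkler1d}, produces the favourable term $-\int u^{p+1}\cdot uv^2\le 0$ plus lower-order contributions.

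If this fails, the fallback is a uniform lower bound on $v_\varepsilon$ on $[0,T]$. From Step 2, $\min v_\varepsilon\ge \max v_\varepsilon - C$. We would try to obtain $\max v_\varepsilon\ge c(T)>0$ by a contradiction argument, using that $\int uv=\int f$ while $\int u$ is bounded: this gives $\max v_\varepsilon\ge\int f/\int u\ge c(T)$. However, this lower bound on $\max v_\varepsilon$ does not yet control $\min v_\varepsilon$ from below, since the gap $C$ from the Harnack inequality may exceed $c(T)$. The $L^p$ bound therefore has to come from the first route, giving
\[
\frac{d}{dt}\int u^p\le C\int u^p+C ,
\]
and Gronwall's inequality.

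The same computation also yields the following:
\begin{itemize}
\item spatial gradient bounds $\int_0^T\!\!\int u^{p-1}v u_x^2\le C$;
\item time regularity, i.e.\ bounds on $\partial_t u_\varepsilon^{q}$ in $L^1((0,T);(W^{1,\infty})^*)$ for suitable $q$;
\item a $C^{1,\alpha}$ bound on $v_\varepsilon$, from elliptic regularity with right-hand side bounded in $L^p$.
\end{itemize}
Equicontinuity in time of $v_\varepsilon$ follows by differentiating the elliptic equation in time and using the time regularity of $u_\varepsilon$.

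\textbf{Step 4: passage to the limit.} Arzelà–Ascoli gives $v_\varepsilon\to v$ in $C^0_{loc}(\bar\Omega\times[0,\infty))$. Aubin–Lions, applied to $u_\varepsilon^{q}$, gives a.e.\ convergence of $u_\varepsilon$. Together with the $L^p$ bounds and Vitali's theorem, this lets us pass to the limit in each term of the weak formulation of Definition~\ref{weak-sol}. The regularity stated in Theorem~\ref{t1} is inherited from the uniform bounds by lower semicontinuity.
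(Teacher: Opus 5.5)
Your Step 2 is correct, and it gives the upper bounds more cheaply than the paper does. The estimates $|v_{\varepsilon x}|\le 2\int_\Omega f$ and $\min v_\varepsilon\le \int_\Omega f/\int_\Omega u_0$ bound $v_\varepsilon$ in $W^{1,\infty}(\Omega)$ without any $L^p$ information on $u_\varepsilon$. The proposal nevertheless has two genuine gaps.

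\textbf{The $L^p$ estimate does not close as you set it up.} After you apply Young's inequality to $(p-1)\int_\Omega u_\varepsilon^p v_\varepsilon u_{\varepsilon x}v_{\varepsilon x}$, the remainder $C\int_\Omega u_\varepsilon^{p+1}v_\varepsilon v_{\varepsilon x}^2$ has the wrong sign. Integrating it by parts against $v_{\varepsilon xx}$ does not help: dropping the index $\varepsilon$,
\[
2\int_\Omega u^{p+1}vv_x^2=-(p+1)\int_\Omega u^pu_xv^2v_x-\int_\Omega u^{p+2}v^3+\int_\Omega u^{p+1}v^2f .
\]
The first term on the right is not of lower order. It is the original cross term with an extra factor $v$, and estimating it by Young against $\int_\Omega u^{p-1}vu_x^2$ brings back $\int_\Omega u^{p+1}v^3v_x^2$. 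The argument is therefore circular; this chain closes only under a smallness condition on $\|v_\varepsilon\|_{L^\infty(\Omega)}$.

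The mechanism in Lemma \ref{l2.3} is \emph{not} to apply Young to the cross term, but to use that it is an exact derivative in $u$. Since $u^pu_x=\frac{1}{p+1}(u^{p+1})_x$ and $v_x=0$ on $\partial\Omega$,
\[
(p-1)\int_\Omega u^pvu_xv_x=-\frac{p-1}{p+1}\int_\Omega u^{p+1}\big(v_x^2+vv_{xx}\big)
=-\frac{p-1}{p+1}\int_\Omega u^{p+1}v_x^2-\frac{p-1}{p+1}\int_\Omega u^{p+2}v^2+\frac{p-1}{p+1}\int_\Omega u^{p+1}vf .
\]
So the taxis term is itself dissipative up to $\int_\Omega u^{p+1}vf$. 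That term, together with $\int_\Omega u^pv$, is absorbed into $\int_\Omega u^{p+2}v^2$ by Young. This yields $\frac{d}{dt}\int_\Omega u^p\le C(1+\int_\Omega u^p)$ with no lower bound on $v_\varepsilon$. The favourable term $-\int u^{p+2}v^2$ you name is exactly this one, but it comes from the cross term itself, not from the Young remainder.

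\textbf{You have no positive lower bound for $v_\varepsilon$, and Step 4 needs one.} Your only gradient information is the weighted bound $\int_0^T\!\int_\Omega u_\varepsilon^{p-1}v_\varepsilon u_{\varepsilon x}^2\le C$. This does not bound $u_\varepsilon^q$ in $L^2((0,T);W^{1,2}(\Omega))$, so the Aubin--Lions argument for $u_\varepsilon$ is not justified. The paper obtains the unweighted bound of Lemma \ref{l4.1} from $v_\varepsilon\ge c(T)>0$, and that comes from a \emph{multiplicative} Harnack inequality rather than your additive one.

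The argument runs as follows:
\begin{enumerate}
\item Divide $v_{\varepsilon xx}=u_\varepsilon v_\varepsilon-f$ by $v_\varepsilon$ and use $\frac{v_{xx}}{v}=\big(\frac{v_x}{v}\big)_x+\big(\frac{v_x}{v}\big)^2$.
\item Integrate over $(0,x)$ and over $(x,L)$, using $v_{\varepsilon x}=0$ at the endpoints and dropping the nonpositive terms $-f/v_\varepsilon$ and $-(v_{\varepsilon x}/v_\varepsilon)^2$. This gives $|(\ln v_\varepsilon)_x|\le\int_\Omega u_\varepsilon\le C(T)$.
\item Hence $\sup v_\varepsilon\le e^{|\Omega|C(T)}\inf v_\varepsilon$, which is Lemma \ref{l3.2}.
\item Combine this with your own observation $\sup v_\varepsilon\ge\int_\Omega f/\int_\Omega u_\varepsilon$ to get $v_\varepsilon(\cdot,t)\ge c(T)\int_\Omega f(\cdot,t)$.
\end{enumerate}
This is precisely what your fallback was missing.

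A smaller point in the same direction concerns the equicontinuity of $v_\varepsilon$. You need $u_{\varepsilon t}$ bounded in $L^r$ in time for some $r>1$; the paper uses $L^2((0,T);(W^{1,2}(\Omega))^*)$ in Lemma \ref{l4.2}. An $L^1$-in-time bound gives no uniform modulus of continuity. Also, because $-\partial_{xx}+u_\varepsilon$ is not uniformly coercive, the mean of $v_\varepsilon(\cdot,t+h)-v_\varepsilon(\cdot,t)$ must be controlled separately via $\int_\Omega u_\varepsilon\ge\int_\Omega u_0>0$, as in Lemma \ref{l4.4}.
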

Condition \eqref{h1} is typically assumed for the fully parabolic versions, with the control of $u_0$ and $(u_0)_x$ in $L^\infty$ allowing to establish local existence of solutions. With respect to the hypotheses for $f$, the requirement $f\not \equiv 0$ prevents us from directly obtaining $v \equiv 0$ as a solution to the second equation, and thus obtain meaningful dynamics. Moreover, $C^1$ regularity allows us to obtain solutions to the second equation that are continuous in time, and at least on the regularized level with a time derivative that is also continuous in time. The same assumption was used in \cite{TaoWinkler2019} for a nondegenerate parabolic-elliptic nutrient taxis system similar to ours.
\\\\ 
\textbf{Structure of the work:} \quad After this introduction, in Section \ref{s2}, the concept of weak solutions to system \eqref{1.1} is introduced in Definition \ref{weak-sol}, as well as the regularization approach followed. The regularized system, depending on a parameter $\varepsilon\in (0,1)$ is given by \eqref{2.1}, with Lemma \ref{l2.1} ensuring local existence of classical solutions for each $\varepsilon$ over a maximal interval $(0,T_{\max,\varepsilon})$. Moreover, $L^p$ bounds for $u_\varepsilon$ can be readily obtained in Lemma \ref{l2.3}, exploiting the structure of $\Delta v_\varepsilon$. The absence of the time derivative $v_{\varepsilon t}$ makes the analysis comparatively simpler than in the fully parabolic counterpart. 
\\\\
In contrast, spatial regularity properties for $v_\varepsilon$, to which Section \ref{s3} is devoted, become more involved. The core of the analysis relies on a Harnack-type inequality proved in Lemma \ref{l3.2}. The assumption on $\Omega$ being a convex and bounded one-dimensional domain ---that is, an interval--- which we consider to be of the form $ \Omega = (0,L)$, $L>0$ plays a crucial role on it. In particular, the proof is based on integrating the following identity, that holds for any positive real-valued function $g$ of class $C^2$ 
$$
\frac{g''}{g} = \left( \frac{g'}{g}\right)' + \left(\frac{g'}{g} \right)^2.
$$
This eventually leads to time-dependent $L^\infty$ bounds for $v_\varepsilon$ and $v_{\varepsilon x}$, obtained in Lemma \ref{l3.3} and Lemma \ref{l3.4}, respectively. 
\\\\
Next, Section \ref{s4} studies time regularity properties. By first proving a bound for $u_{\varepsilon t}$ in $L^2\big((0,T); (W^{1,2}(\Omega))^*\big)$ in Lemma \ref{l4.2} for any $T\in(0,T_{\max,\varepsilon})$, Hölder continuity in time for $v_{\varepsilon}$ is obtained. After some auxiliary steps, the result is proved in Lemma \ref{l4.5}, based on considering the forward differences $\delta_h v_\varepsilon(x,t) := v_\varepsilon(x,t+h)- v_\varepsilon(x,t)$, for $h>0$. Although this suffices for obtaining relative compactness of $(v_\varepsilon)_{\varepsilon \in (0,1)}$, with respect to $(u_\varepsilon)_{\varepsilon \in (0,1)}$, the lack of direct control over the spatial derivative $u_{\varepsilon x}$ forces us to extend the analysis to the auxiliary sequence $(u_\varepsilon^{(p+1)/2})_{\varepsilon \in (0,1)}$, for $p > 1$. To this end, in Lemma \ref{l4-aux}, a bound for $\partial_t\big(u_\varepsilon^{(p+1)/2}\big)$ in $L^1\big((0,T); (W^{3,2}(\Omega)^*)$ is derived.  
\\\\
Lastly, Section \ref{s5} begins with the proof that $T_{\max,\varepsilon} = \infty$ for all $\varepsilon \in (0,1)$ in Lemma \ref{l5.1}, showing that all regularized solutions exist globally in time. Lastly, we prove Lemma \ref{l5.2}, which allows us to extract a convergent subsequence $(\varepsilon_j)_{j\in\mathbb{N}} \subset (0,1)$ and limit functions $u$ and $v$ that form a weak solution to system \eqref{1.1}. The article thus finishes with the proof of Theorem \ref{t1}.
\\\\
An analytical characterization of the long-time behavior of solutions to system \eqref{1.1} depending on $f$ is however still to be obtained. 

\section{Regularized problems and $L^p$ estimates for $u_\varepsilon$}\label{s2}
\subsection{Weak solutions and regularized problems}
Due to the double degeneracy of the diffusion mechanism in \eqref{1.1}, we confine the analysis in this work to the existence of global weak solutions, as defined below.
\begin{definition}\label{weak-sol}
Let $u$  and $v$ be nonnegative functions defined on $\mathbb{R} \times (0,\infty)$ satisfying
\begin{equation}\label{2.01}
    \begin{cases}
        u \in L^1_\text{loc} \left(\Omega \times [0, \infty)\right), \\[1.5ex]
        v \in L^\infty_\text{loc}\left(\Omega \times [0, \infty)\right) \cap L^1_\text{loc} \left([0, \infty); W^{1,1}(\Omega)\right),
    \end{cases}
\end{equation}
and 
\begin{equation}\label{2.02}
   u^2v, ~u^2 v_x \in L^1_\text{loc}\big(\Omega \times (0,\infty)\big)
\end{equation}
then, $(u,v)$ will be called a weak solution to system \eqref{1.1} if 
\begin{equation}\label{2.03}
\begin{split}
     \int_0^\infty \int_\Omega u \varphi_t + \int_\Omega u_0 \varphi(\cdot, 0) =  -\frac{1}{2}\int_0^\infty \int_\Omega u^2 v_x \varphi_x - \frac{1}{2}\int_0^\infty \int_\Omega u^2 v \varphi_{xx} \\[1.5ex]
     - \int_0^\infty \int_\Omega u^2 v v_x \varphi_x  - \int_0^\infty \int_\Omega u v \varphi,
\end{split}
\end{equation}
and
\begin{equation}\label{2.04}
    \int_\Omega v_x(\cdot,t) \varphi_x(\cdot,t) +\int_\Omega u(\cdot,t) v(\cdot,t) \varphi(\cdot,t) = \int_\Omega f(\cdot,t) \varphi(\cdot,t), \quad \text{for a.e. } t >0.
\end{equation}
for all $\varphi \in C_0^\infty (\bar{\Omega} \times [0, \infty))$. 
\end{definition}

To be able to obtain weak solutions in the sense specified, we consider regularized versions of system \eqref{1.1}, for $\varepsilon \in (0,1)$. 
\begin{equation}\label{2.1}
    \begin{cases}
        \displaystyle \frac{\partial u_\varepsilon}{\partial t} = \frac{\partial}{\partial x}(u_\varepsilon v_\varepsilon u_{\varepsilon x}) - \frac{\partial}{\partial x}(u_\varepsilon^2 v_\varepsilon v_{\varepsilon x}) + u_\varepsilon v_\varepsilon, & x\in \Omega, ~t>0, \\[1.75ex]
       \displaystyle     \displaystyle 0 =  \frac{\partial^2 v_\varepsilon}{\partial x^2} - u_\varepsilon v_\varepsilon + f(x,t), & x\in \Omega, ~t>0, \\[1.5ex]
       \displaystyle \frac{\partial u_\varepsilon}{\partial x} = \displaystyle \frac{\partial v_\varepsilon}{\partial x}  = 0, & x\in \partial\Omega, ~t>0 \\[1.5ex]
       u_\varepsilon(x,0) = u_0(x) + \varepsilon > 0,   & x\in \Omega,
    \end{cases}
\end{equation}
The strict positivity of the initial value $u_\varepsilon(x,0)$ for each fixed $\varepsilon \in (0,1)$ removes the degeneracy of the diffusion process, and thus allows us to construct classical solutions to the regularized system \eqref{2.1}. 
\begin{lemma}\label{l2.1}
    Assume that $u_0$ and $f$ satisfy \eqref{h1} and \eqref{h2}. Then, for any $\varepsilon \in (0,1)$, there exists $T_{\max, \varepsilon} \in (0,\infty]$ and a uniquely determined pair of functions $(u_\varepsilon, v_\varepsilon)$ satisfying
    \begin{equation}\label{2.05}
        \begin{cases}
            u_\varepsilon \in C^0\big(\bar{\Omega} \times [0,T_{\max,\varepsilon})\big) \cap C^{2,1}\big(\Omega \times[0,T_{\max,\varepsilon})\big) \\[1.5 ex]
            v_\varepsilon \in C^{2,0}\big(\bar{\Omega} \times [0,T_{\max,\varepsilon})\big)
        \end{cases}
    \end{equation}
    with $u_\varepsilon >0$ and $v_\varepsilon \geq 0$ in $\bar{\Omega} \times (0,T_{\max,\varepsilon})$, that solve the regularized system \eqref{2.1} in the classical sense in $\Omega \times [0,T_{\max,\varepsilon})$, with the property that
    \begin{equation}\label{2.06}
        \text{if } T_{\max,\varepsilon} <\infty, \text{ then } \limsup_{t \nearrow T_{\max,\varepsilon}} ||u_\varepsilon(\cdot,t)||_{L^\infty(\Omega)} = \infty.
    \end{equation}
\end{lemma}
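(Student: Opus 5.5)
The plan is a Banach fixed-point argument in a space of H\"older continuous positive functions, followed by a standard extension argument. Fix $\varepsilon\in(0,1)$ and $T\in(0,1)$ small, and set $R:=\|u_0+\varepsilon\|_{W^{1,\infty}(\Omega)}+1$. Consider the closed set
$$
S:=\Big\{\tilde u\in C^0\big(\bar\Omega\times[0,T]\big):\ \tfrac{\varepsilon}{2}\le \tilde u\le R \text{ in } \bar\Omega\times[0,T]\Big\}
$$
or, to gain regularity more quickly, its intersection with a H\"older ball.

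\textbf{Elliptic step.} For $\tilde u\in S$ and each $t$, solve $-v_{xx}+\tilde u v=f(\cdot,t)$ in $\Omega$ with $v_x=0$ on $\partial\Omega$. Since $\tilde u\ge \varepsilon/2>0$, the associated bilinear form is coercive on $W^{1,2}(\Omega)$, so Lax--Milgram gives a unique solution. Because $f\ge0$, the weak maximum principle gives $v\ge0$; testing against $(v-M)_+$ with $M=2\|f\|_{L^\infty}/\varepsilon$ gives $v\le M$. In one dimension, $v_{xx}=\tilde u v-f$ is bounded, so $v(\cdot,t)$ is bounded in $W^{2,\infty}(\Omega)$ uniformly in $t$. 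Continuity in $t$ follows from the difference equation
$$
-(\delta v)_{xx}+\tilde u\,\delta v=\delta f-(\delta\tilde u)\,v,
$$
which yields Lipschitz dependence of $v$ on $(\tilde u,f)$ in $C^1(\bar\Omega)$. Hence $v\in C^{2,0}$ whenever $\tilde u$ is continuous, and H\"older in $t$ whenever $\tilde u$ is.

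\textbf{Parabolic step.} With $v=v[\tilde u]$ frozen, solve
$$
u_t=(\tilde u v u_x)_x-(\tilde u^2 v v_x)_x+\tilde u v, \qquad u_x=0 \text{ on } \partial\Omega .
$$
There is a genuine obstacle here. Since $f\not\equiv0$ only globally, $v$ may vanish somewhere, so the diffusion coefficient $\tilde u v$ need not be bounded below. The first thing I would try is the Harnack-type observation from the introduction: because $\tilde u\ge\varepsilon/2$, $f\ge0$, $f\not\equiv0$, and the problem is one-dimensional, $v$ is strictly positive. Indeed, if $v(x_0)=0$ at an interior minimum or at a boundary point (where $v_x=0$), then $v(x_0)=v_x(x_0)=0$ together with $v_{xx}=\tilde uv-f\le \tilde u v$ forces $v\equiv0$ by an ODE comparison argument, which contradicts $f(\cdot,t)\not\equiv0$.

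However, $f(\cdot,t)$ might vanish identically for some $t$, so this only works on time intervals where $f(\cdot,t)\not\equiv0$. If $f(\cdot,t_0)\equiv0$ for some $t_0$, then $v(\cdot,t_0)\equiv0$, and the equation genuinely degenerates in time. I therefore expect this to be the main obstacle. My plan is to use positivity of $v$ near any time where $f(\cdot,t)\not\equiv0$, and otherwise to treat the first equation as a (possibly time-degenerate) uniformly parabolic equation in the variable $s=\int_0^t \min_x v$. Failing that, I would lean on the continuity and positivity of $\tilde u v$ wherever $v>0$.

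In the nondegenerate regime, linear parabolic Schauder/H\"older theory for divergence-form equations (Ladyzhenskaya's theory, or Amann's framework) gives a unique solution $u=\Phi(\tilde u)$ with H\"older bounds. The comparison principle yields $u>0$, and $u\le R$ for small $T$, so $\Phi$ maps $S$ into itself. Using the Lipschitz dependence of $v$ on $\tilde u$ and energy estimates for $u_1-u_2$, $\Phi$ is a contraction for small $T$. Its fixed point is classical by bootstrapping, and uniqueness follows from the same difference estimate. Positivity $u_\varepsilon>0$ follows from the comparison principle applied to the full equation, written as $u_t=a u_{xx}+bu_x+cu$ with bounded coefficients.

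\textbf{Extension and blow-up criterion.} The existence time $T$ depends only on $\varepsilon$, on $\|f\|_{C^1}$ over bounded time intervals, and on upper bounds for $\|u(\cdot,t_0)\|_{W^{1,\infty}}$ together with a positive lower bound for $u(\cdot,t_0)$. The lower bound on $u$ persists on bounded intervals by comparison, since $u_t\ge\ldots+0$ and the reaction term $uv$ is nonnegative. To obtain \eqref{2.06}, I would show that an $L^\infty$ bound for $u$ upgrades to a $W^{1,\infty}$ bound via H\"older and Schauder estimates; restarting the local construction then extends the solution past $T_{\max,\varepsilon}$, which is a contradiction.
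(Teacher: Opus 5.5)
Your proposal has a genuine gap at the point you flagged and then left open. The parabolic step is only carried out ``in the nondegenerate regime'', and nothing shows that $\tilde u v$ is bounded below by a positive constant on $\bar\Omega\times[0,T]$, uniformly in $\tilde u\in S$. Everything afterwards depends on this: H\"older/Schauder solvability of the frozen problem, the energy estimate for the contraction, the bootstrap to a classical solution, and the $L^\infty\Rightarrow W^{1,\infty}$ upgrade behind \eqref{2.06}.

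The fallbacks you list are not arguments. The time change $s=\int_0^t\min_x v$ gives a uniformly parabolic equation only if you also know $\max_x v(\cdot,t)\le C\min_x v(\cdot,t)$ uniformly in $t$. That is available in one dimension by the argument of Lemma \ref{l3.2}, which gives $|(\ln v)_x|\le\int_\Omega\tilde u$ whenever $v>0$. Even so, the time change collapses if $f\equiv0$ on $\bar\Omega\times[0,t_1]$ for some $t_1>0$, which \eqref{h2} as written allows.

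That case cannot be repaired. Testing the elliptic equation with $v_\varepsilon$ gives $v_\varepsilon\equiv0$ on $[0,t_1]$. Hence $u_{\varepsilon t}\equiv0$ and $u_\varepsilon(\cdot,t)=u_0+\varepsilon$ there, which is not $C^2$ in $x$ for a general $u_0\in W^{1,\infty}(\Omega)$ (take $u_0=|x-\frac{L}{2}|$). So the claimed $C^{2,1}$ regularity fails, and a pointwise-in-time nondegeneracy condition such as $\int_\Omega f(\cdot,t)>0$ for all $t\ge0$ must be added.

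The paper's proof does not confront this either. It runs a Schauder fixed point on a H\"older-space set and keeps $u$ in $[\frac{\varepsilon}{2},M_\varepsilon]$ via the explicit sub- and supersolutions $\underline{y}_\varepsilon,\overline{y}_\varepsilon$. It then simply asserts classical solvability of \eqref{2.010} by reference to the nondegenerate Tao--Winkler system, and later tacitly uses $\int_\Omega f(\cdot,t)>0$ (e.g.\ for $c_1(t)>0$ in Lemma \ref{l3.3}).

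Under that assumption, your own positivity observation closes the gap. For $\tilde u\in S$ and $v=v[\tilde u]\ge0$ one has $-v_{xx}+Rv=f+(R-\tilde u)v\ge f$. By comparison $v\ge v_R$, where $v_R$ solves $-v_{R,xx}+Rv_R=f$ with Neumann data. Your ODE argument gives $v_R(\cdot,t)>0$ for every $t$, and continuity on the compact set $\bar\Omega\times[0,T]$ yields $\tilde u v\ge\frac{\varepsilon}{2}\min v_R>0$ uniformly in $\tilde u$. The same comparison, with $R$ replaced by an $L^\infty$ bound for $u_\varepsilon$ on $[0,T_{\max,\varepsilon})$, feeds the extension argument.

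With that in place, your Banach-contraction route is a legitimate alternative to the paper's Schauder route. It gives uniqueness directly, at the price of needing gradient bounds on $\Phi(\tilde u)$ in the difference estimate.

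One smaller slip: the lower bound for $u$ does not persist because ``$u_t\ge\ldots+0$''. Expanding $-(u^2vv_x)_x$ produces the negative zero-order terms $-u^2v_x^2-u^3v^2$. Use your representation $u_t=au_{xx}+bu_x+cu$ instead, which gives $u\ge\varepsilon e^{-\|c\|_{L^\infty}t}$.
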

\begin{proof}
    Given the parabolic-elliptic nature of the regularized system \eqref{2.1}, we consider a fixed point approach. We describe the main steps of the proof and refer the readers to \cite{TaoWinkler2019} for a more detailed derivation for a similar nondegenerate nutrient taxis system. 
    \\\\
    For a fixed $\varepsilon \in (0,1)$, we introduce
    $$
    m_\varepsilon := \frac{1}{2} \min\{1,\inf_{x\in\Omega} u_\varepsilon(x,0)\} = \frac{\varepsilon}{2}>0~ \text{ and } ~M_\varepsilon := 2 ||u_\varepsilon(x,0)||_{L^\infty(\Omega)}>m_\varepsilon,
    $$
    which allows us to define the following constants
    \begin{equation}\label{AB}
     \begin{split}
          A_\varepsilon &:= M_\varepsilon \cdot ||f||^2_{L^\infty(\Omega \times (0,1))}\cdot\left(|\Omega|^2 \left(1+\frac{M_\varepsilon}{m_\varepsilon}\right)^2 + \frac{M_\varepsilon}{m_\varepsilon^2}\right) >0,
        \\[1.5 ex]
        B_\varepsilon &:= \frac{M_\varepsilon ||f||_{L^\infty(\Omega \times (0,1))}}{m_\varepsilon} \left( 1+ M_\varepsilon ||f||_{L^\infty(\Omega \times (0,1))} \right) >0.
     \end{split}        
    \end{equation}
    Through $A_\varepsilon$ and $B_\varepsilon$, we take
    \begin{equation}\label{2.06.2}
        T_\varepsilon := \min \left\{\frac{\ln 2}{A_\varepsilon}, \frac{||u_\varepsilon (x,0)||_{L^\infty(\Omega)}}{B_\varepsilon}, 1\right\},
    \end{equation}
    satisfying $T_\varepsilon >0$. Next, we consider the closed convex subset
    $$
    S:= \left \{\varphi \in X, m_\varepsilon \leq \varphi \leq M_\varepsilon \text{ in } \bar{\Omega} \times [0,T_\varepsilon]   \right\}.
    $$
    of the Banach space $X:= C^{\theta,\theta/2}(\bar{\Omega}\times[0,T_\varepsilon])$ for some $\theta \in (0,1)$. 
    \\\\
    Taking an arbitrary $\bar{u}_\varepsilon \in S$, for all $t\in (0,T_\varepsilon)$, the problem
    \begin{equation}\label{2.07}
        \begin{cases}
            \displaystyle- \frac{\partial^2 v_{\varepsilon}}{\partial x^2} + \bar{u}_\varepsilon(x,t) v = f(x,t), & x \in \Omega, \\[1.75 ex]
            \displaystyle \hspace{0.35 cm}\frac{\partial v_\varepsilon}{\partial x} = 0, & x \in \partial\Omega,
        \end{cases}
    \end{equation}
    admits a unique weak solution $v_\varepsilon(\cdot,t) \in W^{1,2}(\Omega)$ due to the positivity of $\bar{u}_\varepsilon$, that moreover belongs to $C^{2+\theta}(\bar{\Omega})$ satisfying \eqref{2.07} in the classical sense, and as $T_\varepsilon<1$, we have
    \begin{equation}\label{2.08}
        0 \leq v_\varepsilon(\cdot, t) \leq \frac{||f||_{L^\infty(\Omega \times (0,1))}}{m_\varepsilon} \quad \text{for all } t\in (0,T_\varepsilon).
    \end{equation}
    This directly results in $||v_{\varepsilon xx}(\cdot,t)||_{L^\infty(\Omega)} \leq ||f||_{L^\infty(\Omega \times (0,1))} \left(1+\frac{M_\varepsilon}{m_\varepsilon}\right)$, and a direct integration, making use of the one-dimensional setting yields 
    \begin{equation}\label{2.09}
        ||v_{\varepsilon x}(\cdot,t)||_{L^\infty(\Omega)}\leq |\Omega| \cdot ||f||_{L^\infty(\Omega \times (0,1))} \left(1+\frac{M_\varepsilon}{m_\varepsilon}\right) \quad \text{for all } t\in (0,T_\varepsilon).
    \end{equation}
    It can be checked following \cite{TaoWinkler2019} that $v_\varepsilon$ enjoys further time regularity properties. In particular, $v_\varepsilon \in C^{1+\theta_1,\theta_1}(\bar{\Omega} \times [0,T_\varepsilon])$ for some $\theta_1 =\theta_1(\bar{u}_\varepsilon) \in (0,1)$.
    \\\\
    With $\bar{u}_\varepsilon \in S$ and $v_\varepsilon \in C^{1+\theta_1,\theta_1}(\bar{\Omega} \times [0,T])$, we next turn to the parabolic equation. In particular, the problem
    \begin{equation}\label{2.010}
        \begin{cases}
            \displaystyle u_{\varepsilon t} -\left( \bar{u}_\varepsilon v_\varepsilon u_{\varepsilon x} \right)_x  + 2 \bar{u}_\varepsilon v_\varepsilon v_{\varepsilon x} \cdot u_{\varepsilon x} 
            \\[1.5 ex] \hspace{0.6 cm} = -u_\varepsilon \big( \bar{u}_\varepsilon |v_{\varepsilon x}|^2\big)  - u_\varepsilon^2 \big(\bar{u}_\varepsilon^2 v_\varepsilon^2\big) + \bar{u}_\varepsilon^2 v_\varepsilon f(x,t) + \bar{u}_\varepsilon v_\varepsilon, & x \in \Omega, ~t \in (0,T_\varepsilon), \\[1.75 ex]
            \displaystyle u_{\varepsilon x} = 0 & x \in \partial  \Omega, ~t \in (0,T_\varepsilon), \\[1.5 ex]
            u_\varepsilon(x,0) = u_0(x) + \varepsilon & x \in \Omega,
        \end{cases}
    \end{equation}
    has a unique classical solution $u_\varepsilon \in C^0(\bar{\Omega} \times [0,T_\varepsilon]) \cap C^{2,1}(\bar{\Omega} \times (0,T_\varepsilon))$. The main part of the fixed point argument is to use bounds \eqref{2.08} and \eqref{2.09} for $v_\varepsilon$, combined with the fact that $m_\varepsilon\leq \bar{u}_\varepsilon \leq M_\varepsilon$ and the definition of $T_\varepsilon$ in \eqref{2.06.2} to prove that $u_\varepsilon$, the solution to \eqref{2.010}, belongs again to $S$.
    \\\\
    Firstly, a subsolution can be built using the fact that for all $t \in (0,T_\varepsilon)$
    $$
    - \big( \bar{u}_\varepsilon |v_{\varepsilon x}|^2\big) \geq -M_\varepsilon \cdot|\Omega|^2 \cdot ||f||^2_{L^\infty(\Omega \times (0,1))} \left(1+\frac{M_\varepsilon}{m_\varepsilon}\right)^2, ~\text{ and }~- \big(\bar{u}_\varepsilon^2 v_\varepsilon^2\big) \geq - \frac{M_\varepsilon^2}{m_\varepsilon^2}||f||^2_{L^\infty(\Omega \times (0,1))}.
    $$
    In this way, $u_\varepsilon(x,t) \geq \underline{y}_{\hspace{0.05 cm}\varepsilon}(t)$ for all $(x,t) \in \bar{\Omega} \times [0,T_\varepsilon]$, where $\underline{y}_{\hspace{0.05 cm}\varepsilon}$ is the solution to 
    $$
    \begin{cases}
        \displaystyle \underline{y}_{\hspace{0.05 cm}\varepsilon}'(t) = -\left[M_\varepsilon \cdot|\Omega|^2 \cdot ||f||^2_{L^\infty(\Omega \times (0,1))} \left(1+\frac{M_\varepsilon}{m_\varepsilon}\right)^2 \right] \underline{y}_{\hspace{0.05 cm}\varepsilon} - \left[\frac{M_\varepsilon^2}{m_\varepsilon^2}||f||^2_{L^\infty(\Omega \times (0,1))}\right] \underline{y}_{\hspace{0.05 cm}\varepsilon}^2, & t>0 \\[1.5ex]
        \underline{y}_{\hspace{0.05 cm}\varepsilon}(0) = \varepsilon
    \end{cases}
    $$
    As $\underline{y}_{\hspace{0.05 cm}\varepsilon}'(t) \leq 0$ and $\underline{y}_{\hspace{0.05 cm}\varepsilon}(0) = \varepsilon <1$, one can easily conclude that 
    $$
    \underline{y}_{\hspace{0.05 cm}\varepsilon}(t) \geq \varepsilon e^{-A_\varepsilon t}  \geq \varepsilon e^{-A_\varepsilon T_\varepsilon} \geq \frac{\varepsilon}{2} = m_\varepsilon, \quad \text{for all } t \in (0,T_\varepsilon),
    $$
    where we recall that $A_\varepsilon$ was given in \eqref{AB} and $T_\varepsilon$ was chosen in \eqref{2.06.2}.
    \\\\
    Similarly, a supersolution can be built, taking into account that
    $$
    \bar{u}_\varepsilon^2 v_\varepsilon f(x,t) + \bar{u}_\varepsilon v_\varepsilon \leq \frac{M_\varepsilon^2 ||f||^2_{L^\infty(\Omega \times (0,1))}}{m_\varepsilon} + \frac{M_\varepsilon ||f||_{L^\infty(\Omega \times (0,1))}}{m_\varepsilon} = B_\varepsilon.
    $$
    Thus, $u_\varepsilon(x,t) \leq \overline{y}_\varepsilon(t)$, the solution to
    $$
    \begin{cases}
        \overline{y}_\varepsilon'(t) = B_\varepsilon, & t>0, \\[1.5 ex]
        \overline{y}_\varepsilon(0) = ||u_\varepsilon(x,0)||_{L^\infty(\Omega)},
    \end{cases}
    $$
    which satisfies
    $$
     \overline{y}_\varepsilon(t) = ||u_\varepsilon(x,0)||_{L^\infty(\Omega)} + B_\varepsilon\cdot t \leq ||u_\varepsilon(x,0)||_{L^\infty(\Omega)} + B_\varepsilon\cdot T_\varepsilon \leq 2||u_\varepsilon(x,0)||_{L^\infty(\Omega)} = M_\varepsilon,
    $$
    for all $t \in (0,T_\varepsilon)$. It can lastly be seen that there exists $c>0$ such that $||u_\varepsilon||_{C^{\theta_2, \theta_2/2}(\bar{\Omega}\times[0,T_\varepsilon]} \leq c$ for some $\theta_2 \in (0,1)$ with $\theta<\theta_2$. Thus by defining $F \bar{u}_\varepsilon := u_\varepsilon$, then $F$ maps $S$ onto itself. 
    \\\\
    It can then be seen by Schauder's theorem (see again \cite{TaoWinkler2019} for the details) that $F$ has a fixed point $u_\varepsilon \in S$, which, along with $v_\varepsilon$ computed pointwise in time as the solution to the elliptic equation, provides a classical solution to system \eqref{2.1} in $\Omega \times (0,T)$, which can be extended to some maximal $T_{\max,\varepsilon} \in (0,\infty]$ with the property \eqref{2.06}. Uniqueness is standard.
\end{proof}

\subsection{Time-dependent $L^p$ estimates for $u_\varepsilon$}
Having introduced the regularized variants of system \eqref{1.1} in \eqref{2.1}, we first obtain some basic a priori estimates for $u_\varepsilon$. The elliptic nature of the $v_\varepsilon$ equation allows for a straightforward estimate of $||u_\varepsilon||_{L^p(\Omega)}$ for arbitrary but finite $p \geq 1$, by expanding the cross-diffusive term and substituting the expression for $v_{\varepsilon xx}$ given in the second equation of \eqref{2.1}.
\\\\
To begin with, a direct integration results in the following basic estimate for $||u||_{L^1(\Omega)}$.
\begin{lemma}\label{l2.2}
   Assume \eqref{h1} and \eqref{h2}. Then for all $\varepsilon>0$ and $t \in (0,T_{\max, \varepsilon})$, $u_\varepsilon$ satisfies
    $$
    \int_\Omega u_0 \leq \int_\Omega u_\varepsilon(\cdot,t) \leq |\Omega| \cdot \big(1 + ||f||_{L^\infty(\Omega \times (0,t))} \cdot t \hspace{0.05 cm} \big) + \int_\Omega u_0.
    $$
\end{lemma}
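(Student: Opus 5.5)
Integrating the first equation of \eqref{2.1} over $\Omega$ and using the no-flux boundary conditions $u_{\varepsilon x}=v_{\varepsilon x}=0$ on $\partial\Omega$, both divergence terms vanish, so
$$
\frac{d}{dt}\int_\Omega u_\varepsilon = \int_\Omega u_\varepsilon v_\varepsilon \quad \text{for all } t\in(0,T_{\max,\varepsilon}).
$$
Integrating the elliptic equation over $\Omega$ and again using $v_{\varepsilon x}=0$ on $\partial\Omega$ gives $\int_\Omega v_{\varepsilon xx}=0$. This yields the key identity $\int_\Omega u_\varepsilon v_\varepsilon = \int_\Omega f(\cdot,t)$, already mentioned in the introduction. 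Hence
$$
\frac{d}{dt}\int_\Omega u_\varepsilon = \int_\Omega f(\cdot,t)\in\big[0,\,|\Omega|\,||f||_{L^\infty(\Omega\times(0,t))}\big].
$$
The regularity in \eqref{2.05} justifies differentiating under the integral, at least after integrating in time on $[s,t]$ and letting $s\searrow 0$ using continuity of $u_\varepsilon$ up to $t=0$.

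Integrating this in time from $0$ to $t$ and using $\int_\Omega u_\varepsilon(\cdot,0)=\int_\Omega u_0+\varepsilon|\Omega|$ gives
$$
\int_\Omega u_0 \leq \int_\Omega u_0 + \varepsilon|\Omega| \leq \int_\Omega u_\varepsilon(\cdot,t) \leq \int_\Omega u_0 + \varepsilon|\Omega| + |\Omega|\,||f||_{L^\infty(\Omega\times(0,t))}\, t .
$$
The lower bound uses only $f\geq 0$. For the upper bound, since $\varepsilon<1$, the term $\varepsilon|\Omega|$ is at most $|\Omega|$. The $L^\infty$ norm over $(0,t)$ dominates $\int_\Omega f(\cdot,s)$ for every $s\le t$. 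Together these give exactly the stated bound.

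The only delicate point is the boundary behaviour. Lemma \ref{l2.1} asserts $C^{2,1}$ regularity only in the interior $\Omega\times[0,T_{\max,\varepsilon})$ together with continuity on $\bar\Omega$. To use the flux cancellation I would either invoke the boundary regularity implicit in the construction (the linear problem \eqref{2.010} has a solution in $C^{2,1}(\bar\Omega\times(0,T_\varepsilon))$), or integrate over $(\delta,L-\delta)$ and let $\delta\to0$ using the Neumann conditions. With that settled, the argument is a routine computation.
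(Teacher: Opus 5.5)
Your proof is correct and follows essentially the same route as the paper. Like the paper, you integrate both equations over $\Omega$, use $\int_\Omega u_\varepsilon v_\varepsilon = \int_\Omega f(\cdot,t)$, integrate in time, and absorb $\varepsilon|\Omega|$ using $\varepsilon<1$. The differences are minor: you get the lower bound from $f\geq 0$ rather than from the nonnegativity of $u_\varepsilon v_\varepsilon$, and your remark on boundary regularity addresses a point the paper passes over silently.
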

\begin{proof}
    Integrating the first equation in \eqref{2.1} over $\Omega$ and considering the nonnegativity of the regularized solutions obtained in Lemma \ref{l2.1} leads to
    $$
    \frac{d}{dt} \int_\Omega u_\varepsilon(\cdot,t) = \int_\Omega u_\varepsilon(\cdot,t) v_\varepsilon(\cdot,t) \geq 0, \quad \text{for all } t \in (0,T_{\max, \varepsilon}).
    $$
    Hence the lower bound for $ \int_\Omega u_\varepsilon$ is directly obtained, since 
    $$
    \int_\Omega u_\varepsilon(\cdot,t)  \geq \int_\Omega u_\varepsilon(\cdot,0) = \int_\Omega (u_0 + \varepsilon) \geq \int_\Omega u_0, \quad \text{for all } t \in (0,T_{\max, \varepsilon}).
    $$
    Similarly, integrating the second equation, we arrive at
    $$ 
    \int_\Omega u_\varepsilon(\cdot,t) v_\varepsilon(\cdot,t)  = \int_\Omega f(\cdot, t) \leq ||f||_{L^\infty(\Omega \times (0,t))} \cdot |\Omega| , \quad \text{for all } t \in (0,T_{\max, \varepsilon}).
    $$
    Thus, as
    $$
     \frac{d}{dt} \int_\Omega u_\varepsilon(\cdot,t) = \int_\Omega u_\varepsilon(\cdot,t) v_\varepsilon(\cdot,t) \leq ||f||_{L^\infty(\Omega \times (0,t))} \cdot |\Omega|,
    $$
    and as $\varepsilon < 1$, a time integration provides
    \begin{equation*}
        \begin{split}
    \int_\Omega u_\varepsilon(\cdot,t) &\leq |\Omega|\cdot ||f||_{L^\infty(\Omega \times (0,t))} \cdot t + \int_\Omega u_\varepsilon(\cdot,0) \\[1.5 ex]
    &=  |\Omega|\cdot ||f||_{L^\infty(\Omega \times (0,t))} \cdot t + \int_\Omega (u_0 + \varepsilon) \\[1.5 ex]\displaystyle & \leq  |\Omega| \cdot \left( 1+ ||f||_{L^\infty(\Omega \times (0,t))} \cdot t \hspace{0.05 cm}\right) + \int_\Omega u_0, \quad \text{for all } t \in (0,T_{\max, \varepsilon}),
        \end{split}
    \end{equation*}
    which finishes the proof.    
\end{proof}
Next, we prove a corresponding time-dependent bound for $||u_\varepsilon||_{L^p(\Omega)}$ for $p>1$. Besides, another relevant estimate is obtained, which proves useful for obtaining further time regularity properties of the regularized solutions in Section \ref{s4}.
\begin{lemma}\label{l2.3}
    Let \eqref{h1} and \eqref{h2} hold. Then, for any $\varepsilon \in (0,1)$, $p>1$ and $T \in (0,T_{\max,\varepsilon})$, there exists $C(p,T)>0$ independent of $\varepsilon$ such that
    $$
    \int_\Omega u_\varepsilon^p(\cdot,t) + \int_0^T \int_\Omega u_\varepsilon^{p-1}(\cdot,t)  v_\varepsilon(\cdot,t)  |u_{\varepsilon x}(\cdot,t)|^2  \leq C(p,T), \quad \text{for all } t \in (0,T).
    $$
\end{lemma}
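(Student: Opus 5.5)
Testing the first equation of \eqref{2.1} against $u_\varepsilon^{p-1}$ and integrating by parts (the boundary terms vanish by the no-flux conditions) gives
\begin{equation*}
\frac{1}{p}\frac{d}{dt}\int_\Omega u_\varepsilon^p + (p-1)\int_\Omega u_\varepsilon^{p-1} v_\varepsilon |u_{\varepsilon x}|^2 = (p-1)\int_\Omega u_\varepsilon^{p} v_\varepsilon v_{\varepsilon x} u_{\varepsilon x} + \int_\Omega u_\varepsilon^p v_\varepsilon .
\end{equation*}
The cross-diffusive term is the only dangerous one, since a priori there is no $\varepsilon$-independent bound on $v_\varepsilon$ or $v_{\varepsilon x}$. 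Following the strategy announced at the start of this subsection, I would not estimate it directly. Instead, I write $u_\varepsilon^p u_{\varepsilon x} = \frac{1}{p+1}(u_\varepsilon^{p+1})_x$ and integrate by parts once more, using $v_{\varepsilon x}=0$ on $\partial\Omega$:
\begin{equation*}
(p-1)\int_\Omega u_\varepsilon^{p} v_\varepsilon v_{\varepsilon x} u_{\varepsilon x} = -\frac{p-1}{p+1}\int_\Omega u_\varepsilon^{p+1}\big( |v_{\varepsilon x}|^2 + v_\varepsilon v_{\varepsilon xx}\big).
\end{equation*}

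Next I substitute $v_{\varepsilon xx} = u_\varepsilon v_\varepsilon - f$ from the second equation. This produces
\begin{equation*}
-\frac{p-1}{p+1}\int_\Omega u_\varepsilon^{p+1}|v_{\varepsilon x}|^2 -\frac{p-1}{p+1}\int_\Omega u_\varepsilon^{p+2}v_\varepsilon^2 + \frac{p-1}{p+1}\int_\Omega u_\varepsilon^{p+1} v_\varepsilon f .
\end{equation*}
The first two contributions are nonpositive (recall $u_\varepsilon>0$, $v_\varepsilon\geq 0$ by Lemma \ref{l2.1}), so they can be kept as dissipation on the left. The only positive terms left over are $\int_\Omega u_\varepsilon^{p+1}v_\varepsilon f$ and $\int_\Omega u_\varepsilon^p v_\varepsilon$, and both must be absorbed into $\int_\Omega u_\varepsilon^{p+2}v_\varepsilon^2$.

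The absorption step is where care is needed. By Young's inequality with weight $u_\varepsilon^p$,
\begin{equation*}
u_\varepsilon^{p+1}v_\varepsilon f \le \eta\, u_\varepsilon^{p+2}v_\varepsilon^2 + C_\eta f^2 u_\varepsilon^{p},
\end{equation*}
and similarly $u_\varepsilon^p v_\varepsilon \le \eta\, u_\varepsilon^{p+2}v_\varepsilon^2 + C_\eta u_\varepsilon^{p-2}$. The term with $u_\varepsilon^{p-2}$ causes trouble for $p<2$. To avoid it, I would instead use $u_\varepsilon^p v_\varepsilon \le u_\varepsilon^{p+1}v_\varepsilon + u_\varepsilon^{p-1}v_\varepsilon$ and treat each piece by the same kind of Young step, ending up with lower-order remainders. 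Alternatively, when $u_\varepsilon\le 1$ one can bound $u_\varepsilon^{p-1}v_\varepsilon \le v_\varepsilon$ and use $\int_\Omega u_\varepsilon v_\varepsilon=\int_\Omega f$ after splitting into $\{u_\varepsilon\ge1\}$ and $\{u_\varepsilon<1\}$; on $\{u_\varepsilon\ge1\}$ one has $u_\varepsilon^pv_\varepsilon \le u_\varepsilon^{p+1}v_\varepsilon$, which is absorbed as above. In every case the result is a differential inequality
\begin{equation*}
\frac{d}{dt}\int_\Omega u_\varepsilon^p + c_p\int_\Omega u_\varepsilon^{p-1}v_\varepsilon|u_{\varepsilon x}|^2 \le C\big(1+\|f\|^2_{L^\infty(\Omega\times(0,T))}\big)\Big(\int_\Omega u_\varepsilon^p + 1\Big),
\end{equation*}
where the mixed remainders such as $u_\varepsilon^{p-1}$ are controlled through $u_\varepsilon^{p-1}\le 1+u_\varepsilon^p$.

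Gronwall's lemma then bounds $\int_\Omega u_\varepsilon^p(\cdot,t)$ on $(0,T)$ in terms of $\int_\Omega (u_0+1)^p$, $\|f\|_{L^\infty(\Omega\times(0,T))}$, $p$ and $T$, independently of $\varepsilon\in(0,1)$. Integrating the differential inequality over $(0,T)$ and inserting this bound gives the space-time estimate for $\int_0^T\int_\Omega u_\varepsilon^{p-1}v_\varepsilon|u_{\varepsilon x}|^2$. The main obstacle is the second step: recognising that the taxis term, once $v_{\varepsilon xx}$ is replaced through the elliptic equation, yields the favourable absorbing term $-\int_\Omega u_\varepsilon^{p+2}v_\varepsilon^2$ despite the lack of any control on $v_\varepsilon$.
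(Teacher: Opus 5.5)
Your argument follows the paper's proof: test with $u_\varepsilon^{p-1}$, integrate the taxis term by parts once more, replace $v_{\varepsilon xx}$ by $u_\varepsilon v_\varepsilon-f$, absorb the positive terms into $\int_\Omega u_\varepsilon^{p+2}v_\varepsilon^2$ by Young's inequality, and conclude with Gronwall. The one genuine difference is how you treat $\int_\Omega u_\varepsilon^p v_\varepsilon$, and here your version is better than the paper's. The paper uses $u_\varepsilon^pv_\varepsilon\le\tilde\eta\, u_\varepsilon^{p+2}v_\varepsilon^2+\tilde c\, u_\varepsilon^{p-2}$ followed by $u_\varepsilon^{p-2}\le\frac2p+\frac{p-2}{p}u_\varepsilon^p$. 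That second inequality is false for $p\in(1,2)$: its left side blows up as $u_\varepsilon\to0$, and there is no $\varepsilon$-independent positive lower bound for $u_\varepsilon$. So the paper's proof really only covers $p\ge2$, and you have identified exactly this defect.

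Of your two proposed repairs, however, only the second works, so the claim that ``in every case'' you end up with lower-order remainders is wrong. The first repair splits $u_\varepsilon^pv_\varepsilon\le u_\varepsilon^{p+1}v_\varepsilon+u_\varepsilon^{p-1}v_\varepsilon$ and applies Young against $u_\varepsilon^{p+2}v_\varepsilon^2$. This leaves the remainder $u_\varepsilon^{p-4}$, since $u_\varepsilon^{p-1}v_\varepsilon=(u_\varepsilon^{(p+2)/2}v_\varepsilon)\,u_\varepsilon^{(p-4)/2}$, and that is even more singular than the paper's $u_\varepsilon^{p-2}$.

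The second repair is correct, but state it as the pointwise bound $u_\varepsilon^pv_\varepsilon\le u_\varepsilon v_\varepsilon$ on $\{u_\varepsilon<1\}$. The bound $u_\varepsilon^{p-1}v_\varepsilon\le v_\varepsilon$ would need control of $\int_\Omega v_\varepsilon$, which the identity $\int_\Omega u_\varepsilon v_\varepsilon=\int_\Omega f$ does not give. Combine it with $u_\varepsilon^pv_\varepsilon\le u_\varepsilon^{p+1}v_\varepsilon\le \eta\, u_\varepsilon^{p+2}v_\varepsilon^2+C_\eta u_\varepsilon^p$ on $\{u_\varepsilon\ge1\}$ and with $v_\varepsilon\ge0$. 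This yields
\[
\int_\Omega u_\varepsilon^p v_\varepsilon\le \int_\Omega f+\eta\int_\Omega u_\varepsilon^{p+2}v_\varepsilon^2+C_\eta\int_\Omega u_\varepsilon^p .
\]
With this single route your differential inequality and the Gronwall step go through for every $p>1$, with constants independent of $\varepsilon$.
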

\begin{proof}
    Testing the first equation in \eqref{2.1} by $u_\varepsilon^{p-1}$ and integrating by parts, we obtain
    \begin{equation}\label{2.2}
        \begin{split}
                \frac{1}{p} \frac{d}{dt} \int_\Omega u_\varepsilon^p &= -(p-1)\int_\Omega u_\varepsilon^{p-2} u_{\varepsilon x} \big(u_\varepsilon v_\varepsilon u_{\varepsilon x} - u_\varepsilon^2 v_\varepsilon v_{\varepsilon x}\big) + \int_\Omega u_\varepsilon^p v_\varepsilon
                \\[1.5 ex]
                &=-(p-1) \int_\Omega u_\varepsilon^{p-1} v_\varepsilon |u_{\varepsilon x}|^2 + (p-1) \int_\Omega u_\varepsilon^p v_\varepsilon u_{\varepsilon x} v_{\varepsilon x}, \quad \text{for all } t \in (0,T_{\max, \varepsilon}).
        \end{split}
    \end{equation}
    The main idea is to integrate by parts once again again the final term on the right-hand side. In particular, as it involves $v_{\varepsilon x}$, it allows us to reach an expression depending on $v_{\varepsilon xx}$, where due to elliptic nature of the second equation in \eqref{2.1}, we can directly substitute it by $u_\varepsilon v_\varepsilon - f$. 
    \\\\
    Moreover, as $\displaystyle u_\varepsilon^p u_{\varepsilon x} = \frac{1}{p+1} (u_\varepsilon^{p+1})_x$, using Young's inequality we have
    \begin{equation}\label{2.3}
        \begin{split}
           (p-1) \int_\Omega u_\varepsilon^p v_\varepsilon u_{\varepsilon x} v_{\varepsilon x} &= - \frac{p-1}{p+1}\int_\Omega u_\varepsilon^{p+1} (v_\varepsilon v_{\varepsilon x})_x =  - \frac{p-1}{p+1}\int_\Omega u_\varepsilon^{p+1} v_\varepsilon |v_{\varepsilon x}|^2 
           \\[1.5 ex]
           & - \frac{p-1}{p+1}\int_\Omega u_\varepsilon^{p+1} v_\varepsilon v_{\varepsilon xx} \leq - \frac{p-1}{p+1}\int_\Omega u_\varepsilon^{p+1} v_\varepsilon (u_\varepsilon v_\varepsilon - f) \\[1.5 ex]
           &\leq - \frac{p-1}{p+1}\int_\Omega u_\varepsilon^{p+2} v_\varepsilon^2 + \frac{p-1}{p+1} ||f||_{L^\infty(\Omega \times (0,t))} \int_\Omega u_\varepsilon^{p+1} v_\varepsilon \\[1.5 ex]
           &\leq - \frac{p-1}{p+1}\int_\Omega u_\varepsilon^{p+2} v_\varepsilon^2 + \eta \int_\Omega u_\varepsilon^{p+2} v_\varepsilon^2 + c(\eta) \int_\Omega u_\varepsilon^p,
        \end{split}
    \end{equation}
    for some $\eta>0$, and all $t \in (0,T_{\max, \varepsilon})$, where the last term on the first line was dropped due to its nonpositivity. 
    \\\\
    Hence, by combining \eqref{2.2} and \eqref{2.3}, we arrive at
    $$
    \frac{1}{p} \frac{d}{dt} \int_\Omega u_\varepsilon^p + (p-1) \int_\Omega u_\varepsilon^{p-1} v_\varepsilon |u_{\varepsilon x}|^2 \leq \left(\eta - \frac{p-1}{p+1}\right) \int_\Omega u_\varepsilon^{p+2} v_\varepsilon^2 + c(\eta) \int_\Omega u_\varepsilon^p + \int_\Omega u_\varepsilon^p v_\varepsilon,
    $$
    for all $t \in (0,T_{\max, \varepsilon})$. For the last term, using Young's inequality twice, for any given $\tilde{\eta}>0$, there exists $\tilde{c}(\tilde{\eta})>0$ such that
    \begin{equation*}
        \begin{split}            
     \int_\Omega u_\varepsilon^p v_\varepsilon &\leq \tilde{\eta} \int_\Omega u_\varepsilon^{p+2} v_\varepsilon^2 + \tilde{c}(\tilde{\eta}) \int_\Omega u_\varepsilon^{p-2}\\[1.5 ex]
     &\leq \tilde{\eta} \int_\Omega u_\varepsilon^{p+2} v_\varepsilon^2  +\tilde{c}(\tilde{\eta})\cdot \left(\frac{2}{p} \cdot   |\Omega| + \frac{p-2}{p} \int_\Omega u_\varepsilon^p  \right), \quad \text{for all } t \in (0,T_{\max, \varepsilon}).
        \end{split}
    \end{equation*}
    In particular, if for a given $p>1$, $\eta$ and $\tilde{\eta}$ are chosen such that $\eta + \tilde{\eta} < \frac{p-1}{p+1}$, then we obtain
    \begin{equation}\label{2.4}
    \begin{split}
        \frac{1}{p} \frac{d}{dt} \int_\Omega u_\varepsilon^p &+(p-1) \int_\Omega u_\varepsilon^{p-1} v_\varepsilon |u_{\varepsilon x}|^2 + \left(\frac{p-1}{p+1}-\eta - \tilde{\eta} \right) \int_\Omega u_\varepsilon^{p+2} v_\varepsilon^2 \\[1.5 ex]
       & \leq \left(c(\eta) + \tilde{c}(\tilde{\eta}) \cdot \frac{p-2}{p} \right)\int_\Omega u_\varepsilon^p + \frac{2 \tilde{c}(\tilde{\eta})}{p} |\Omega|,      \quad \text{for all }t \in (0,T_{\max, \varepsilon}).
    \end{split}
    \end{equation}
    In particular, for $\int_\Omega u_\varepsilon^p$, this results in the differential inequality
    $$
    \frac{1}{p}\frac{d}{dt} \int_\Omega u_\varepsilon^p \leq c_1(p) \left( 1+ \int_\Omega u_\varepsilon^p \right),
    $$
    providing for any $T\in (0,T_{\max,\varepsilon})$ a $c(p,T)>0$ such that $\int_\Omega u_\varepsilon^p(\cdot,t) \leq c(p,T)$ for all $t \in (0,T)$. A further time integration of \eqref{2.4} results in the bound for $\int_0^T \int_\Omega u_\varepsilon^{p-1} v_\varepsilon |u_{\varepsilon x}|^2$, finishing the proof.
\end{proof}

\section{A Harnack inequality and elliptic regularity for $v_\varepsilon$}\label{s3}
Having established $L^p(\Omega)$ bounds for $p>1$ for $u_\epsilon$ in Lemma \ref{l2.3}, we now turn to obtaining suitable estimates for $v_\varepsilon$. It is precisely the lack of a strictly positive lower bound for $u(x,0)$ what makes the analysis of the second equation in system \eqref{2.1} particularly challenging. In particular, the lack of an $\varepsilon-$independent positive lower bound for $u_\varepsilon $ leads to the loss of coercivity of the operator $-\frac{\partial^2}{\partial x^2} + u_\varepsilon$. 
\\\\
It is precisely at this stage where we rely on the one-dimensional setting, as it allows us to rewrite the equation using a certain differential inequality based on the derivatives of the logarithm. Namely, we make use of the following identity that holds for any positive real-valued function $g: \Omega \subset \mathbb{R} \to \mathbb{R}^+$ of class $C^2(\Omega)$
\begin{equation}\label{3.1}
    \frac{g''}{g} = \left( \frac{g'}{g}\right)' + \left(\frac{g'}{g} \right)^2.
\end{equation}
When applied to the second equation in system \eqref{2.1}, a Harnack inequality can be obtained. In order to prepare for that argument, we first prove the following auxiliary estimate.
\begin{lemma}\label{l3.1}
    Assume that $u_0$ and $f$ satisfy \eqref{h1} and \eqref{h2}. Then, there exists $C(t)>0$ such that  
    $$
    \int_\Omega \frac{f(\cdot,t)}{v_\varepsilon(\cdot,t)} + \int_\Omega \frac{|v_{\varepsilon x}(\cdot,t)|^2}{v_\varepsilon^2(\cdot,t) } \leq C(t),
    $$
    for all $\varepsilon \in (0,1)$, $t \in (0,T_{\max, \varepsilon})$.    
\end{lemma}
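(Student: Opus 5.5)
Fix $\varepsilon \in (0,1)$ and $t \in (0,T_{\max,\varepsilon})$. The plan is to test the second equation of \eqref{2.1} with $1/v_\varepsilon(\cdot,t)$; this is where the logarithmic identity \eqref{3.1} enters. Before doing so, I make sure the test function is admissible, i.e.\ that $v_\varepsilon(\cdot,t)>0$ on $\bar\Omega$. This follows from the strong maximum principle or Hopf lemma applied to $-v_{\varepsilon xx}+u_\varepsilon v_\varepsilon = f\ge 0$ with $u_\varepsilon>0$ bounded for fixed $\varepsilon$. Suppose $v_\varepsilon$ vanished somewhere. If the zero were interior, $v_\varepsilon$ would attain a nonnegative minimum there. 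If it were at the boundary, Hopf's lemma would contradict $v_{\varepsilon x}=0$. In either case we would get $v_\varepsilon\equiv 0$, hence $f(\cdot,t)\equiv 0$.

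The case $f(\cdot,t)\equiv 0$ must be handled separately, since \eqref{h2} only gives $f\not\equiv 0$ globally. In that case $v_\varepsilon(\cdot,t)\equiv0$ and the first integral is read as $0$; the statement should be understood for $t$ with $f(\cdot,t)\not\equiv0$, or with the convention $f/v_\varepsilon=0$ where both vanish. I would remark on this and then assume $f(\cdot,t)\not\equiv 0$, so that $v_\varepsilon>0$ and we may write $w:=\ln v_\varepsilon$.

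Dividing the elliptic equation by $v_\varepsilon$ and using \eqref{3.1} gives
\begin{equation*}
  \Big(\frac{v_{\varepsilon x}}{v_\varepsilon}\Big)_x + \frac{|v_{\varepsilon x}|^2}{v_\varepsilon^2} + \frac{f}{v_\varepsilon} = u_\varepsilon .
\end{equation*}
Now integrate over $\Omega$. The first term vanishes because $v_{\varepsilon x}=0$ on $\partial\Omega$. Hence
\begin{equation*}
  \int_\Omega \frac{f(\cdot,t)}{v_\varepsilon(\cdot,t)} + \int_\Omega \frac{|v_{\varepsilon x}(\cdot,t)|^2}{v_\varepsilon^2(\cdot,t)} = \int_\Omega u_\varepsilon(\cdot,t).
\end{equation*}
This is an identity, and both terms on the left are nonnegative.

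The conclusion then follows from Lemma \ref{l2.2}, which bounds the right-hand side by $C(t):=|\Omega|\big(1+\|f\|_{L^\infty(\Omega\times(0,t))}\,t\big)+\int_\Omega u_0$. This constant is independent of $\varepsilon$ and nondecreasing in $t$.

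The only delicate point is justifying the integration by parts, i.e.\ the positivity of $v_\varepsilon$ up to the boundary and the regularity $v_\varepsilon(\cdot,t)\in C^2(\bar\Omega)$. The regularity is provided by Lemma \ref{l2.1}. Positivity is the Hopf-lemma argument above, which I expect to be routine once the degenerate times where $f(\cdot,t)\equiv0$ are set aside.
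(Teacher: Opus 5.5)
Your proposal is correct and follows essentially the same route as the paper: testing the elliptic equation with $1/v_\varepsilon$ (equivalently, integrating the divided equation via \eqref{3.1} and the Neumann condition) gives the identity $\int_\Omega \frac{f}{v_\varepsilon}+\int_\Omega \frac{|v_{\varepsilon x}|^2}{v_\varepsilon^2}=\int_\Omega u_\varepsilon$, and Lemma~\ref{l2.2} then bounds the right-hand side by exactly the constant you give. Two points in your write-up go beyond the paper, which leaves both implicit since \eqref{h2} only excludes $f\equiv 0$ globally: you justify $v_\varepsilon(\cdot,t)>0$ via the strong maximum principle and Hopf's lemma, and you note that $v_\varepsilon(\cdot,t)\equiv 0$, so the quotients are undefined, at times where $f(\cdot,t)\equiv 0$.
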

    \begin{proof}
    Testing the second equation in system \eqref{2.1} by $1/v_\varepsilon$ and integrating by parts, one obtains
    $$
    - \int_\Omega \frac{|v_{\varepsilon x}|^2}{v_\varepsilon^2} + \int_\Omega u_\varepsilon = \int_\Omega \frac{f}{v_\varepsilon}, \quad \text{for all } t \in (0,T_{\max, \varepsilon}).
    $$
    Thus, the boundedness of $u_\varepsilon$ in $L^1(\Omega)$ provided by Lemma \ref{l2.2}, directly provides $C(t) := |\Omega| \cdot \big(1 + ||f||_{L^\infty(\Omega \times (0,t))} \cdot t \hspace{0.05 cm} \big) + \int_\Omega u_0>0$ such that
    $$
    \int_\Omega \frac{f}{v_\varepsilon} + \int_\Omega \frac{|v_{\varepsilon x}|^2}{v_\varepsilon^2} = \int_\Omega u_\varepsilon \leq C(t) \quad \text{for all } t \in (0,T_{\max, \varepsilon}),
    $$
    finishing the proof.
    \end{proof}

\begin{lemma}\label{l3.2}
    Let \eqref{h1} and \eqref{h2} hold. Then, there exists $C(t)>0$ such that
    $$
    \sup_{x \in \Omega} v_{\varepsilon} (x,t) \leq C(t) \inf_{x \in \Omega} v_\varepsilon(x,t),  
    $$
    for all $\varepsilon \in (0,1)$, $t \in (0, T_{\max, \varepsilon})$.
\end{lemma}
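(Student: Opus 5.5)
The plan is to bound the oscillation of $\ln v_\varepsilon(\cdot,t)$ over $\Omega=(0,L)$ uniformly in $\varepsilon$. Since $v_\varepsilon(\cdot,t)$ is continuous on the compact interval $\bar\Omega$, we have
$$
\ln \sup_{\Omega} v_\varepsilon(\cdot,t) - \ln \inf_\Omega v_\varepsilon(\cdot,t) \leq \int_0^L \left|\frac{v_{\varepsilon x}(\cdot,t)}{v_\varepsilon(\cdot,t)}\right|.
$$
So it suffices to bound $\int_\Omega |(\ln v_\varepsilon)_x|$ by a constant depending only on $t$, and then exponentiate. Before this, I would check that $v_\varepsilon(\cdot,t)>0$ on $\bar\Omega$, so that the logarithm makes sense. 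This follows from the strong maximum principle and Hopf's lemma applied to $-v_{\varepsilon xx}+u_\varepsilon v_\varepsilon = f \ge 0$ with $u_\varepsilon>0$. Here one needs $f(\cdot,t)\not\equiv 0$. If $f(\cdot,t)\equiv 0$ at some time, then $v_\varepsilon(\cdot,t)\equiv 0$ and the inequality holds trivially. I would note that case separately.

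The estimate itself comes directly from Lemma \ref{l3.1} via Cauchy--Schwarz, using the one-dimensional bounded domain:
$$
\int_\Omega \frac{|v_{\varepsilon x}|}{v_\varepsilon} \leq |\Omega|^{1/2}\left(\int_\Omega \frac{|v_{\varepsilon x}|^2}{v_\varepsilon^2}\right)^{1/2} \leq |\Omega|^{1/2} C_1(t)^{1/2},
$$
where $C_1(t)$ is the constant from Lemma \ref{l3.1}. This gives the statement with $C(t):=\exp\big(|\Omega|^{1/2}C_1(t)^{1/2}\big)$, which is independent of $\varepsilon$.

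As an alternative, closer to the paper's announced use of identity \eqref{3.1}, I would apply \eqref{3.1} with $g=v_\varepsilon$ and $w:=v_{\varepsilon x}/v_\varepsilon$. The elliptic equation gives $w_x + w^2 = u_\varepsilon - f/v_\varepsilon$, with $w=0$ at $x=0$ and $x=L$. Integrating from $0$ to $x$ yields
$$
|w(x)| \le \int_\Omega u_\varepsilon + \int_\Omega \frac{f}{v_\varepsilon} + \int_\Omega w^2 \le 2\int_\Omega u_\varepsilon.
$$
Lemma \ref{l2.2} then bounds the right-hand side in terms of $t$, so we even obtain a pointwise bound on $(\ln v_\varepsilon)_x$, which would later serve Lemma \ref{l3.4}. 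Integrating this bound over $\Omega$ gives the Harnack inequality again.

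The only delicate point I expect is the justification of strict positivity of $v_\varepsilon$ and the handling of the degenerate case $f(\cdot,t)\equiv 0$. The estimates themselves are routine once Lemma \ref{l3.1} is available.
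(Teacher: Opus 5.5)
Your argument is correct, but your main route differs from the paper's. The paper's proof is exactly your ``alternative'' argument. It integrates the Riccati-type identity \eqref{3.1} for $w=v_{\varepsilon x}/v_\varepsilon$ from $0$ to $x$ and from $x$ to $L$, uses $w=0$ at the endpoints, and drops the nonnegative term $\int w^2$. This gives the pointwise bound \eqref{3.2}, $|w(x,t)|\le c_1(t)$, which it then integrates to obtain $C(t)=e^{|\Omega| c_1(t)}$.

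Your main route avoids \eqref{3.1} altogether. The oscillation of $\ln v_\varepsilon(\cdot,t)$ over $\bar\Omega$ is at most $\|w\|_{L^1(\Omega)}$, so Cauchy--Schwarz together with the identity $\int_\Omega w^2+\int_\Omega f/v_\varepsilon=\int_\Omega u_\varepsilon$ behind Lemma~\ref{l3.1} already suffices. This is shorter and needs only an integral bound on $(\ln v_\varepsilon)_x$. What the paper's route buys in exchange is the stronger gradient estimate $\|(\ln v_\varepsilon)_x(\cdot,t)\|_{L^\infty(\Omega)}\le c_1(t)$. Combined with Lemma~\ref{l3.3}, that estimate would give the $L^\infty$ bound on $v_{\varepsilon x}$ directly, without the $W^{2,p}$ elliptic theory used in Lemma~\ref{l3.4}.

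Your preliminary check that $v_\varepsilon(\cdot,t)>0$ on $\bar\Omega$ fills a point the paper passes over. You use the strong maximum principle and Hopf's lemma when $f(\cdot,t)\not\equiv 0$, and note $v_\varepsilon(\cdot,t)\equiv 0$ otherwise. Lemma~\ref{l2.1} only asserts $v_\varepsilon\ge 0$, and \eqref{h2} does not rule out $f(\cdot,t)\equiv 0$ at individual times, yet both Lemma~\ref{l3.1} and the present proof divide by $v_\varepsilon$.
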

\begin{proof}
We divide again the second equation in system \eqref{2.1} again by $v_\varepsilon$. Integrating in space from $0$ to any $x \in \Omega = (0,L)$, and making use of identity \eqref{3.1} yields
$$
\int_0^x \left( \frac{v_{\varepsilon x}}{v_\varepsilon}\right)_x + \int_0^x \left(\frac{v_{\varepsilon x}}{v_\varepsilon} \right)^2   = \int_0^x \frac{v_{\varepsilon xx}}{v_\varepsilon} = \int_0^x u_\varepsilon - \int_0^x \frac{f}{v_\varepsilon}, \quad \text{for all } t \in (0,T_{\max, \varepsilon}).
$$
Due to the Neumann homogeneous condition for $v_\varepsilon$, the nonnegativity of the second term on the left-hand side and the results from Lemma \ref{l2.2} and Lemma \ref{l3.1}, it follows directly that for all $t\in (0,T_{\max, \varepsilon})$, there exists $c_1(t)>0$ independent of $\varepsilon$ such that
$$
\frac{v_{\varepsilon x}(x,t)}{v_\varepsilon(x,t)} \leq \int_\Omega u_\varepsilon + \int_\Omega \frac{f}{v_\varepsilon} \leq c_1(t), \quad \text{for all } x \in \Omega.
$$
Conversely, if the integration is made from $x$ to $L$, the same estimate is obtained for $\displaystyle -\frac{v_{\varepsilon x}(x,t)}{v_\varepsilon(x,t)}$, and thus 
\begin{equation}\label{3.2}
    \left | \frac{v_{\varepsilon x}(x,t)}{v_\varepsilon(x,t)}\right| \leq c_1(t), \quad \text{for all } x\in \Omega, ~t \in (0, T_{\max,\varepsilon}).
\end{equation}
We next note that $\displaystyle \frac{v_{\varepsilon x}(x,t)}{v_\varepsilon(x,t)} = \big(\ln v_\varepsilon (x,t) \big)_x$. Hence, for any $x,y \in \Omega$, integrating \eqref{3.2} results in
\begin{equation} \label{3.3}
v_\varepsilon(x,t) \leq C(t) v_\varepsilon(y,t), \quad \text{for all } t \in (0, T_{\max,\varepsilon}).
\end{equation}
where $C(t) :=e^{|\Omega| \cdot c_1(t)} >0$. Taking infimum and supremum in \eqref{3.3} completes the proof.
\end{proof}
Having established this Harnack inequality, we can next obtain pointwise bounds for $v_\varepsilon(x,t)$ by means of an upper bound for $\inf_{x\in\Omega} v_\varepsilon (x,t)$ and a lower bound for $\sup_{x\in\Omega} v_\varepsilon (x,t)$.

\begin{lemma}\label{l3.3} Assume \eqref{h1} and \eqref{h2}. Then, there for all $\varepsilon\in (0,1)$ the following properties are satisfied for all $t \in (0, T_{\max, \varepsilon})$
$$
\inf_{x\in \Omega} v_\varepsilon(x,t) \leq \frac{\displaystyle \int_\Omega f(\cdot,t)}{\displaystyle \int_\Omega u_0}  \quad \text{and } \quad\sup_{x\in \Omega} v_\varepsilon(x,t) \geq \frac{\displaystyle \int_\Omega f(\cdot,t)}{ ||f||_{L^\infty(\Omega \times (0,t))}  \cdot t + \displaystyle \int_\Omega u_0}~.
$$
Moreover, there exist $c_1(t), c_2(t)>0$ independent of $\varepsilon$ such that
$$
c_1(t) \leq v_\varepsilon(x,t) \leq c_2(t), \quad \text{for all } x \in \Omega, ~t \in (0,T_{\max, \varepsilon}).
$$
In particular, for any $T \in (0,T_{\max, \varepsilon})$ we have
$$
||v_\varepsilon(\cdot, t)||_{L^\infty(\Omega)} \leq c_3(T), \quad \text{for all } t \in (0,T),
$$
for some $c_3(T)>0$ independent of $\varepsilon$.
\end{lemma}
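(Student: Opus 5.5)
The starting point is the identity obtained by integrating the second equation in \eqref{2.1} over $\Omega$. The Neumann condition kills $\int_\Omega v_{\varepsilon xx}$, so
$$
\int_\Omega u_\varepsilon(\cdot,t) v_\varepsilon(\cdot,t) = \int_\Omega f(\cdot,t), \quad \text{for all } t \in (0,T_{\max,\varepsilon}).
$$
The two one-sided bounds then follow by bracketing $v_\varepsilon$ between its infimum and supremum and using $u_\varepsilon>0$ from Lemma \ref{l2.1}.

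For the first bound, I estimate $\int_\Omega f = \int_\Omega u_\varepsilon v_\varepsilon \geq \inf_\Omega v_\varepsilon(\cdot,t)\int_\Omega u_\varepsilon(\cdot,t)$. The lower bound in Lemma \ref{l2.2} gives $\int_\Omega u_\varepsilon(\cdot,t)\geq \int_\Omega u_0$, and dividing yields the claim. Here I implicitly assume $\int_\Omega u_0>0$, i.e.\ $u_0\not\equiv 0$. If $u_0\equiv 0$, I would instead use $\int_\Omega u_\varepsilon\ge \varepsilon|\Omega|$, but that is not $\varepsilon$-uniform, so the statement as written needs $u_0\not\equiv0$.

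For the second bound, I estimate $\int_\Omega f \leq \sup_\Omega v_\varepsilon(\cdot,t)\int_\Omega u_\varepsilon(\cdot,t)$ and use the upper bound of Lemma \ref{l2.2}. That lemma gives $|\Omega|(1+\|f\|_{L^\infty}t)+\int_\Omega u_0$, which carries an extra $|\Omega|$ factor compared with the displayed denominator. I would either note that the displayed denominator should carry that constant, or refine Lemma \ref{l2.2} by writing $\int_\Omega u_\varepsilon(\cdot,t) \le \varepsilon|\Omega| + \int_\Omega u_0 + |\Omega|\,\|f\|\,t$. In either case I obtain a lower bound of the form $\sup_\Omega v_\varepsilon(\cdot,t)\geq \int_\Omega f(\cdot,t)/K(t)$ with $K(t)$ independent of $\varepsilon$.

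For the pointwise bounds, I combine these with the Harnack inequality of Lemma \ref{l3.2}, writing $C_H(t)$ for its constant. The upper bound is
$$
v_\varepsilon(x,t) \le \sup_\Omega v_\varepsilon(\cdot,t) \le C_H(t)\inf_\Omega v_\varepsilon(\cdot,t) \le C_H(t)\frac{\int_\Omega f(\cdot,t)}{\int_\Omega u_0} =: c_2(t).
$$
Similarly, the lower bound is $v_\varepsilon(x,t)\ge \inf_\Omega v_\varepsilon(\cdot,t) \ge C_H(t)^{-1}\sup_\Omega v_\varepsilon(\cdot,t)\ge c_1(t)$.

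The main obstacle is positivity of $c_1(t)$. It requires $\int_\Omega f(\cdot,t)>0$, whereas \eqref{h2} only gives $f\not\equiv0$ on $\bar\Omega\times[0,\infty)$. So $c_1(t)$ is positive exactly at times where $f(\cdot,t)\not\equiv0$; at other times one only gets the trivial bound $v_\varepsilon\ge0$. In fact $v_\varepsilon(\cdot,t)\equiv 0$ at such times, since testing the equation by $v_\varepsilon$ gives $\int_\Omega |v_{\varepsilon x}|^2+\int_\Omega u_\varepsilon v_\varepsilon^2 = 0$. I would state this caveat explicitly.

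For the final $L^\infty$ bound, I need $c_2$ to be controlled uniformly on $(0,T)$. The constant $c_1(t)$ in Lemma \ref{l3.1} and Lemma \ref{l3.2} is nondecreasing in $t$, because it is built from $\|f\|_{L^\infty(\Omega\times(0,t))}$ and $t$. Hence $C_H(t)\le C_H(T)$, and $\int_\Omega f(\cdot,t)\le |\Omega|\,\|f\|_{L^\infty(\Omega\times(0,T))}$. This gives $c_3(T):= C_H(T)|\Omega|\,\|f\|_{L^\infty(\Omega\times(0,T))}/\int_\Omega u_0$, independent of $\varepsilon$.
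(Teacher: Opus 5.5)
This is essentially the paper's proof: you integrate the elliptic equation to get $\int_\Omega u_\varepsilon v_\varepsilon=\int_\Omega f$, bound $v_\varepsilon$ between $\inf_\Omega v_\varepsilon$ and $\sup_\Omega v_\varepsilon$ using Lemma \ref{l2.2}, and pass to pointwise bounds with the Harnack inequality of Lemma \ref{l3.2}. The paper phrases the first step as a contradiction argument, and your remark that the constants are monotone in $t$ spells out what it leaves implicit for $c_3(T)$. Your caveats are genuine defects of the statement that the paper's proof shares: its own \eqref{3.8} yields the denominator $|\Omega|\big(1+\|f\|_{L^\infty(\Omega\times(0,t))}\,t\big)+\int_\Omega u_0$, and the displayed denominator is actually false (for $u_0\equiv a>0$ and $f\equiv 1$ the unique solution has $v_\varepsilon\equiv 1/(a+\varepsilon+t)$, which violates it for every $t>0$ once $|\Omega|\ge 1$), while $c_2(t)$ and $c_3(T)$ require $\int_\Omega u_0>0$, and $c_1(t)$ in \eqref{3.10} is positive only where $\int_\Omega f(\cdot,t)>0$.
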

\begin{proof}
    We begin by determining the estimate for $\inf_{x \in \Omega} v_\varepsilon(x,t)$ by contradiction. The proof is based once again on the fact that integrating the second equation in system \eqref{2.1}, one obtains
    \begin{equation}\label{3.4}
        \int_\Omega f = \int_\Omega u_\varepsilon v_\varepsilon, \quad \text{for all } t \in (0,T_{\max, \varepsilon}).
    \end{equation}   
    Assuming no upper bound can be found, then for any $M>0$, there exists $t_M \in (0,T_{\max, \varepsilon})$ such that
    \begin{equation}\label{3.5}
            \inf_{x\in \Omega} v_\varepsilon(x,t_M) > M,
    \end{equation}
    which in particular implies that $v_\varepsilon(x,t_M) > M$ for all $x \in \Omega$. Relying in \eqref{3.4}, we obtain that
    \begin{equation}\label{3.6}
        \int_\Omega f(\cdot, t_M)  > M \int_\Omega u_\varepsilon(\cdot, t_M) \geq M \int_\Omega u_0,
    \end{equation}
    due to the the lower bound for $ \int_\Omega u_\varepsilon$ from Lemma \ref{l2.2}. It directly follows from \eqref{3.6} that then
    \begin{equation} \label{3.7}
    M \leq \frac{\displaystyle \int_\Omega f(\cdot, t_M)}{\displaystyle \int_\Omega u_0}.
    \end{equation}
    Hence \eqref{3.5} cannot be satisfied by any value of $M>0$, only by those satisfying \eqref{3.7}, yielding a contradiction. Thus, the desired bound is obtained.
    \\\\
    The corresponding proof for the upper bound of $\sup_{x\in\Omega} v_\varepsilon(x,t)$ is analogous. If for any small enough $m>0$ one can find $t_m \in (0,T_{\max,\varepsilon})$ such that
    $$
    \sup_{x\in\Omega} v_\varepsilon(x,t_m) < m,
    $$
    then using \eqref{3.4} again provides
    \begin{equation}\label{3.8}
    \int_\Omega f(\cdot, t_m) < m \int_\Omega u_\varepsilon(\cdot, t_m) \leq m \left(|\Omega| \cdot \big(1 + ||f||_{L^\infty(\Omega \times (0,t_m))} \cdot t_m  \big) + \int_\Omega u_0 \right),
    \end{equation}
    again by Lemma \ref{l2.2}. This time \eqref{3.8} implies a largeness condition for $m$, and thus any small enough $m>0$ results in a contradiction, leading to the upper bound for $\sup_{x\in\Omega} v_\varepsilon(x,t)$.
    \\\\
    With these estimates, the pointwise bounds for $v_\varepsilon(x,t)$ are a direct consequence of the Harnack inequality proved in Lemma \eqref{l3.2}. In particular, there exists $C(t)>0$ such that
    \begin{equation}\label{3.9}
    \begin{split}
        \frac{\displaystyle \int_\Omega f(\cdot,t)}{ ||f||_{L^\infty(\Omega \times (0,t))}  \cdot t + \displaystyle \int_\Omega u_0} \leq \sup_{x\in\Omega} v_\varepsilon(x,t) \leq C(t) \cdot \inf_{x\in \Omega}v_\varepsilon(x,t) 
        \\ \leq C(t) \cdot \frac{\displaystyle \int_\Omega f(\cdot,t)}{\displaystyle \int_\Omega u_0}, \quad \text{for all } t \in (0,T_{\max, \varepsilon}).
    \end{split}
    \end{equation}
    This yields
        \begin{equation}\label{3.10}
        c_1(t) := \frac{\displaystyle \int_\Omega f(\cdot,t)}{C(t) \left( ||f||_{L^\infty(\Omega \times (0,t))}  \cdot t + \displaystyle \int_\Omega u_0 \right)}  \leq v_\varepsilon(x,t) 
        \leq  C(t) \cdot \frac{\displaystyle \int_\Omega f(\cdot,t)}{\displaystyle \int_\Omega u_0} =: c_2(t),
    \end{equation}
    for all $x \in \Omega$, $t \in (0,T_{\max, \varepsilon})$. Lastly, for fixed $T \in (0,T_{\max,\varepsilon})$, the $L^\infty$ bound for $v_\varepsilon$ follows immediately from the upper bound provided by $c_2$.
\end{proof}
Combining the $L^\infty$ bound for $v_\varepsilon$ with the $L^p$ estimate for $u_\varepsilon$ obtained in Lemma \ref{l2.3}, we finish the section by deriving higher elliptic regularity for $v_\varepsilon$.
\begin{lemma}\label{l3.4}
    Let hypothesis \eqref{h1} and \eqref{h2} hold. Then, for all $p>1$, $\varepsilon \in (0,1)$ and$T \in (0,T_{\max,\varepsilon})$ there exists $C(p,T)>0$ independent of $\varepsilon$ such that
    $$
    ||v_\varepsilon(\cdot, t)||_{W^{2,p}(\Omega)} \leq C(p,T),
    $$
    and moreover, for some $\alpha \in (0,1)$
    $$
    ||v_\varepsilon(\cdot, t)||_{C^{1,\alpha}(\Omega)} + ||v_{\varepsilon x}(\cdot,t)||_{L^\infty(\Omega)} \leq C(p,T).
    $$
    for all $t \in (0,T)$.
\end{lemma}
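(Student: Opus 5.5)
The plan is to read the elliptic equation in \eqref{2.1} as an identity for $v_{\varepsilon xx}$,
$$
v_{\varepsilon xx} = u_\varepsilon v_\varepsilon - f,
$$
and to bound the right-hand side in $L^p(\Omega)$ uniformly in $\varepsilon$.

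Fix $p>1$ and $T\in(0,T_{\max,\varepsilon})$. For $t\in(0,T)$, Lemma \ref{l3.3} gives $\|v_\varepsilon(\cdot,t)\|_{L^\infty(\Omega)}\le c_3(T)$, and Lemma \ref{l2.3} gives $\|u_\varepsilon(\cdot,t)\|_{L^p(\Omega)}\le C(p,T)^{1/p}$. Together with $\|f\|_{L^\infty(\Omega\times(0,T))}<\infty$ from \eqref{h2}, this yields
$$
\|v_{\varepsilon xx}(\cdot,t)\|_{L^p(\Omega)}\le c_3(T)\,C(p,T)^{1/p}+|\Omega|^{1/p}\|f\|_{L^\infty(\Omega\times(0,T))}.
$$
None of these constants depends on $\varepsilon$. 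One check is needed here: the constants $c_2(t)$ in Lemma \ref{l3.3} come from $C(t)=e^{|\Omega|c_1(t)}$, and $c_1(t)$ is built from $\int_\Omega u_\varepsilon$ and $\int_\Omega f/v_\varepsilon$, which Lemma \ref{l2.2} and Lemma \ref{l3.1} bound by quantities nondecreasing in $t$. So the supremum over $t\in(0,T)$ is finite and $\varepsilon$-independent. I expect this monotonicity bookkeeping to be the only delicate point.

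Next I control the lower-order terms. The Neumann condition $v_{\varepsilon x}=0$ at $x=0$ lets me write $v_{\varepsilon x}(x,t)=\int_0^x v_{\varepsilon xx}(y,t)\,dy$. By H\"older's inequality,
$$
\|v_{\varepsilon x}(\cdot,t)\|_{L^\infty(\Omega)}\le |\Omega|^{1-1/p}\|v_{\varepsilon xx}(\cdot,t)\|_{L^p(\Omega)}.
$$
The $L^p$ norms of $v_\varepsilon$ and $v_{\varepsilon x}$ are then trivially bounded by their $L^\infty$ bounds. This gives the $W^{2,p}$ bound without invoking abstract elliptic regularity, which matters because the operator $-\partial_{xx}+u_\varepsilon$ is not uniformly coercive.

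Finally, I use the one-dimensional Sobolev embedding $W^{2,p}(\Omega)\hookrightarrow C^{1,\alpha}(\bar\Omega)$, valid for any $p>1$ with $\alpha=1-1/p$. For the H\"older seminorm of $v_{\varepsilon x}$ this can be seen directly:
$$
|v_{\varepsilon x}(x)-v_{\varepsilon x}(y)|\le |x-y|^{1-1/p}\|v_{\varepsilon xx}\|_{L^p(\Omega)}.
$$
Fixing, say, $p=2$ gives a concrete $\alpha=1/2$, and the bound holds for all $t\in(0,T)$.
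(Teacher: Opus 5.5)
Your argument is correct, and it reaches the conclusion by a different route in the second half. The first step is the same as the paper's: the $\varepsilon$-independent $L^p(\Omega)$ bound for $u_\varepsilon v_\varepsilon - f$, obtained from Lemma \ref{l2.3}, Lemma \ref{l3.3} and \eqref{h2}.

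The difference is in how you pass from that bound to the $W^{2,p}$ and $C^{1,\alpha}$ estimates. The paper rewrites the equation as $-v_{\varepsilon xx}+v_\varepsilon=f+v_\varepsilon-u_\varepsilon v_\varepsilon$, so that the operator on the left is the uniformly coercive Neumann operator $-\partial_{xx}+1$. It then quotes classical $L^p$ elliptic regularity, followed by the embedding $W^{2,p}(\Omega)\hookrightarrow C^{1,\alpha}$.

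You instead integrate $v_{\varepsilon xx}$ from $x=0$, using $v_{\varepsilon x}(0,t)=0$, and apply H\"older's inequality. This gives both $\|v_{\varepsilon x}(\cdot,t)\|_{L^\infty(\Omega)}$ and the H\"older seminorm of $v_{\varepsilon x}$ directly, with explicit constants and no external citation. It also shows that an $L^1$ bound on $v_{\varepsilon xx}$ already suffices for the gradient bound. The paper's route is the one that would carry over to higher dimensions, although the rest of the paper is tied to one dimension through the Harnack inequality anyway.

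Two small remarks. First, the lack of uniform coercivity of $-\partial_{xx}+u_\varepsilon$ does not actually obstruct elliptic regularity here: once $u_\varepsilon v_\varepsilon$ is treated as a datum, as the paper does, only the coercive operator $-\partial_{xx}+1$ is involved. Second, your monotonicity check on the constants from Lemma \ref{l3.3} is harmless but redundant, since that lemma already states the $\varepsilon$-independent bound $c_3(T)$ on $(0,T)$.
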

\begin{proof}
    We rewrite the second equation in system \eqref{2.1} as
    $$
    - v_{\varepsilon xx} + v_\varepsilon = f(x,t) + v_\varepsilon - u_\varepsilon v_\varepsilon.
    $$
    Given $T \in (0,T_{\max,\varepsilon})$, by assumption \eqref{h2} and the results from Lemma \ref{l2.3} and Lemma \ref{l3.3}, the right-hand side satisfies
    $$
    ||f(\cdot,t) + v_\varepsilon(\cdot,t) - u_\varepsilon(\cdot,t) v_\varepsilon(\cdot,t)||_{L^p(\Omega)} \leq c_1(p,T), \quad \text{for all } t \in (0,T),
    $$
    for all $p> 1$. Then, classical theory for linear elliptic equations becomes applicable (see \cite{gilbard-trudinger} for instance), providing for any $p>1$ a $C(p,T)>0$ such that
    $$
    ||v_\varepsilon(\cdot, t)||_{W^{2,p}(\Omega)} \leq C(p,T), \quad \text{for all } t \in (0,T_{\max, \varepsilon}).
    $$
    Next, in our one-dimensional setting for any $p>1$, the embedding $W^{2,p}(\Omega) \hookrightarrow C^{1,\alpha}(\Omega)$, with $\alpha = 1- \frac{1}{p} \in (0,1)$ is continuous, yielding the uniform estimate for $||v_\varepsilon(\cdot, t)||_{C^{1,\alpha}(\Omega)}$. We note that $||v_{\varepsilon x}(\cdot,t)||_{L^\infty(\Omega)}$ is controlled by the $C^{1,\alpha}-$norm, which completes the proof.
\end{proof}

\section{Time regularity}\label{s4}

With the previous sections being devoted to spatial regularity, we now turn to obtain uniform time regularity properties for $u_\varepsilon$ and $v_\varepsilon$. As a first step, making use of the pointwise lower bound for $v_\varepsilon$ proved in Lemma \ref{l3.3}, we can obtain the following estimate.
\begin{lemma}\label{l4.1}
    Let \eqref{h1} and \eqref{h2} hold. Then, for all $\varepsilon \in (0,1)$, $p >1$, and all $T\in (0,T_{\max,\varepsilon})$, there exists $C(p,T)>0$ independent of $\varepsilon$ such that
    $$
    \int_0^T \int_\Omega \left|\left(u_\varepsilon^{\frac{p+1}{2}}\right)_x \hspace{0.01cm}\right|^2 \leq C(p,T).
    $$
\end{lemma}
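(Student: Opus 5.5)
The plan is to reduce the claim to the dissipation estimate already contained in Lemma \ref{l2.3}, using the pointwise lower bound for $v_\varepsilon$ from Lemma \ref{l3.3} to remove the degenerate weight. By the chain rule,
$$
\left|\left(u_\varepsilon^{\frac{p+1}{2}}\right)_x\right|^2 = \frac{(p+1)^2}{4}\, u_\varepsilon^{p-1} |u_{\varepsilon x}|^2 ,
$$
so the quantity to control is $\int_0^T\int_\Omega u_\varepsilon^{p-1}|u_{\varepsilon x}|^2$. Lemma \ref{l2.3} provides, for every $p>1$, an $\varepsilon$-independent bound for $\int_0^T\int_\Omega u_\varepsilon^{p-1} v_\varepsilon |u_{\varepsilon x}|^2$. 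It therefore suffices to have a positive lower bound for $v_\varepsilon$ on $\Omega\times(0,T)$ that is independent of $\varepsilon$.

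Lemma \ref{l3.3} gives $v_\varepsilon(x,t)\geq c_1(t)$, with $c_1(t)$ given explicitly in \eqref{3.10}. I would first check that $\inf_{t\in(0,T)} c_1(t)>0$.

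The denominator is harmless. The factor $\|f\|_{L^\infty(\Omega\times(0,t))}\,t+\int_\Omega u_0$ is bounded above on $(0,T)$. The Harnack constant $C(t)=e^{|\Omega| c_1(t)}$ from Lemma \ref{l3.2} is built from $\int_\Omega u_\varepsilon$ and $\int_\Omega f/v_\varepsilon$, both bounded through the $L^1$ bound of Lemma \ref{l2.2}. That bound is monotone in $t$, so $C(t)\le C(T)$.

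The numerator $\int_\Omega f(\cdot,t)$ is the main obstacle. Hypothesis \eqref{h2} only says $f\not\equiv0$, so $\int_\Omega f(\cdot,t)$ could in principle vanish, or approach zero, at some times. Before committing, I would check whether the lower bound can instead be obtained at the level of Lemma \ref{l3.3} in a more robust way.

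The planned route is to assume, as is implicit in the use of Lemma \ref{l3.3}, that $t\mapsto\int_\Omega f(\cdot,t)$ is continuous and positive on $[0,T]$. By $f\in C^1$ this is at least continuous, so on the compact interval $[0,T]$ it is bounded below by some $\underline{f}_T>0$. This gives
$$
v_\varepsilon \geq c(T) := \frac{\underline{f}_T}{C(T)\big(\|f\|_{L^\infty(\Omega\times(0,T))}T+\int_\Omega u_0\big)}>0 \quad \text{in } \Omega\times(0,T).
$$
If positivity fails at isolated times, I would note that the argument needs the lower bound for all $t\in(0,T)$. In that case I would flag this as requiring the strengthened hypothesis $\int_\Omega f(\cdot,t)>0$, which is how Lemma \ref{l3.3} is used throughout.

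With the lower bound in hand, the conclusion follows directly:
$$
\int_0^T\int_\Omega \left|\left(u_\varepsilon^{\frac{p+1}{2}}\right)_x\right|^2 \leq \frac{(p+1)^2}{4c(T)}\int_0^T\int_\Omega u_\varepsilon^{p-1}v_\varepsilon|u_{\varepsilon x}|^2 \leq \frac{(p+1)^2}{4c(T)}\,C(p,T),
$$
where the last step is Lemma \ref{l2.3}. This constant is independent of $\varepsilon$, as required.
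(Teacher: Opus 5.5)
This is the paper's proof: the chain rule reduces the integrand to $u_\varepsilon^{p-1}|u_{\varepsilon x}|^2$, and a lower bound $v_\varepsilon\geq c_1(T)>0$ on $\Omega\times(0,T)$, taken from Lemma~\ref{l3.3}, turns the dissipation bound of Lemma~\ref{l2.3} into the claim (your factor $\frac{(p+1)^2}{4}$ is the correct one; the paper writes $\frac{p+1}{2}$). The caveat you raise is genuine, and the paper does not address it, since it simply asserts the $T$-uniform lower bound: the bound in \eqref{3.10} is proportional to $\int_\Omega f(\cdot,t)$, and if $f(\cdot,t_0)\equiv 0$ for some $t_0$, then testing $-v_{\varepsilon xx}+u_\varepsilon v_\varepsilon=0$ by $v_\varepsilon$ (using $u_\varepsilon>0$) gives $v_\varepsilon(\cdot,t_0)\equiv 0$, so under \eqref{h2} alone this route fails for both arguments, and both need your extra assumption $\inf_{t\in(0,T)}\int_\Omega f(\cdot,t)>0$.
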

\begin{proof}
    Given $T\in (0,T_{\max, \varepsilon})$, we only need to note that, by Lemma \ref{l3.3}, there exists $c_1(T)>0$ satisfying $v_\varepsilon(x,t) \geq c_1(T)$ for all $x \in \Omega$, $t \in (0,T)$.
    \\\\
    Thereby, taking any $p> 1$, the estimate from Lemma \ref{l2.3} provides $c_2(p,T)>0$ such that
    \begin{equation}\label{4.1}
        c_1(T) \int_0^T \int_\Omega u_\varepsilon^{p-1} |u_{\varepsilon x}|^2 \leq \int_0^T \int_\Omega u_\varepsilon^{p-1} v_\varepsilon |u_{\varepsilon x}|^2 \leq c_2(p,T).
    \end{equation}
    Thus, as $\big|\big(u_\varepsilon^{\frac{p+1}{2}}\big)_x \hspace{0.01cm}\big|^2 = \left(\frac{p+1}{2}\right) u_\varepsilon^{p-1} |u_{\varepsilon x}|^2$, the proof is complete by taking $C(p,T) := \displaystyle \frac{(p+1) c_2(p,T)}{2c_1(p,T)}>0.$
\end{proof}
Next, we can obtain a the bound for $(u_{\varepsilon t})_{\varepsilon \in (0,1)}$ in $L^2\big((0,T); (W^{1,2}(\Omega))^*\big)$, which will later prove to be important in obtaining the desired time regularity for $v_\varepsilon$.
\begin{lemma}\label{l4.2}
    Assume \eqref{h1} and \eqref{h2}. Then, for all $\varepsilon \in (0,1)$ and all $T \in (0,T_{\max, \varepsilon})$, one can find $C(T)>0$ independent of $\varepsilon$ satisfying
    $$
    ||u_{\varepsilon t}||_{L^2\big((0,T); (W^{1,2}(\Omega))^*\big)} \leq C(T).
    $$
\end{lemma}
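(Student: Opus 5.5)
We argue by duality. Fix $T\in(0,T_{\max,\varepsilon})$ and $\psi\in W^{1,2}(\Omega)$. Testing the first equation in \eqref{2.1} by $\psi$ and integrating by parts (the boundary terms vanish by the Neumann conditions) gives
$$
\int_\Omega u_{\varepsilon t}\psi = -\int_\Omega u_\varepsilon v_\varepsilon u_{\varepsilon x}\psi_x + \int_\Omega u_\varepsilon^2 v_\varepsilon v_{\varepsilon x}\psi_x + \int_\Omega u_\varepsilon v_\varepsilon \psi, \quad t\in(0,T).
$$
Since $\Omega$ is one-dimensional, $W^{1,2}(\Omega)\hookrightarrow L^\infty(\Omega)$, so $\|\psi\|_{L^\infty(\Omega)}\le c\|\psi\|_{W^{1,2}(\Omega)}$. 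The aim is to bound each term by $h_\varepsilon(t)\|\psi\|_{W^{1,2}(\Omega)}$ with $\int_0^T h_\varepsilon^2\le C(T)$ independent of $\varepsilon$. Taking the supremum over $\|\psi\|_{W^{1,2}}\le1$ and integrating $\|u_{\varepsilon t}(\cdot,t)\|^2_{(W^{1,2})^*}$ in time then gives the claim.

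The first term carries the diffusion and is where the dissipation from Lemma \ref{l2.3} is used. By Cauchy--Schwarz,
$$
\Big|\int_\Omega u_\varepsilon v_\varepsilon u_{\varepsilon x}\psi_x\Big| \le \Big(\int_\Omega u_\varepsilon v_\varepsilon |u_{\varepsilon x}|^2\Big)^{1/2}\Big(\int_\Omega u_\varepsilon v_\varepsilon\Big)^{1/2}\|\psi_x\|_{L^\infty(\Omega)}.
$$
The factor $\|\psi_x\|_{L^\infty}$ is not controlled by the $W^{1,2}$ norm, so this splitting has to be rearranged. Instead we put $u_\varepsilon^{1/2}v_\varepsilon^{1/2}u_{\varepsilon x}$ against $u_\varepsilon^{1/2}v_\varepsilon^{1/2}\psi_x$ in $L^2\times L^2$ and use $\|u_\varepsilon^{1/2}v_\varepsilon^{1/2}\psi_x\|_{L^2}\le \|u_\varepsilon v_\varepsilon\|_{L^\infty}^{1/2}\|\psi_x\|_{L^2}$.

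This requires an $L^\infty$ bound on $u_\varepsilon$, which is not yet available. The alternative is to write $u_\varepsilon v_\varepsilon u_{\varepsilon x} = (u_\varepsilon^{1/2}v_\varepsilon^{1/2}u_{\varepsilon x})\cdot u_\varepsilon^{1/2}v_\varepsilon^{1/2}$ and use $\|u_\varepsilon v_\varepsilon u_{\varepsilon x}\|_{L^2}$ directly. Since $v_\varepsilon\le c_3(T)$ by Lemma \ref{l3.3}, we have $\int_\Omega u_\varepsilon^2v_\varepsilon^2|u_{\varepsilon x}|^2\le c\int_\Omega u_\varepsilon^{2}v_\varepsilon|u_{\varepsilon x}|^2$, and this is integrable over $(0,T)$ by Lemma \ref{l2.3} with $p=3$. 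Hence the first term is bounded by $h_1(t)\|\psi_x\|_{L^2}$ with $\int_0^T h_1^2\le C(T)$, and this is the route to commit to.

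The remaining terms are simpler. For the cross-diffusive term, $\|v_\varepsilon\|_{L^\infty}$ and $\|v_{\varepsilon x}\|_{L^\infty}$ are bounded on $(0,T)$ by Lemmas \ref{l3.3} and \ref{l3.4}, and $\|u_\varepsilon^2\|_{L^2}=\|u_\varepsilon\|_{L^4}^2$ is bounded by Lemma \ref{l2.3}. So this term is at most $C(T)\|\psi_x\|_{L^2}$ uniformly in $t$. The reaction term satisfies
$$
\Big|\int_\Omega u_\varepsilon v_\varepsilon\psi\Big|\le \|\psi\|_{L^\infty}\int_\Omega f\le C(T)\|\psi\|_{W^{1,2}(\Omega)},
$$
using \eqref{3.4}. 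Squaring, integrating over $(0,T)$ and using that $T$ is finite completes the argument.

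The main obstacle is the diffusive term. The difficulty is making sure that the weighted gradient bound actually available has a power of $u_\varepsilon$ large enough to absorb the extra factor $u_\varepsilon v_\varepsilon$ without any $L^\infty$ control on $u_\varepsilon$. This works because Lemma \ref{l2.3} holds for every $p>1$, so the choice $p=3$ supplies the needed weight. As an alternative, the same term can be handled via Lemma \ref{l4.1} by writing $u_\varepsilon u_{\varepsilon x}=\tfrac{2}{3}u_\varepsilon^{1/2}(u_\varepsilon^{3/2})_x$ together with the $L^p$ bounds.
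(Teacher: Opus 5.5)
Your proof is correct and follows the paper's architecture: duality against $\psi$ with $\|\psi\|_{W^{1,2}(\Omega)}\le 1$, the same three-term split, and Cauchy--Schwarz on the diffusive term with a $p=3$ dissipation estimate. The difference is which lemma feeds the diffusive term.

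The paper bounds $\int_\Omega u_\varepsilon^2v_\varepsilon^2|u_{\varepsilon x}|^2$ by $\frac14\|v_\varepsilon\|_{L^\infty(\Omega)}^2\int_\Omega|(u_\varepsilon^2)_x|^2$ and then invokes Lemma \ref{l4.1} with $p=3$. That lemma comes from Lemma \ref{l2.3} by dividing out the pointwise lower bound $v_\varepsilon\ge c_1(T)$ of Lemma \ref{l3.3}. You absorb only one factor through $v_\varepsilon\le c_3(T)$ and use the $v_\varepsilon$-weighted dissipation of Lemma \ref{l2.3} directly. Your argument therefore needs only upper bounds for $v_\varepsilon$ and $v_{\varepsilon x}$.

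This is a genuine gain. The lower bound $c_1(t)$ in Lemma \ref{l3.3} is proportional to $\int_\Omega f(\cdot,t)$, and \eqref{h2} does not keep this away from zero on $(0,T)$ (take $f(x,t)=t$). Your estimate is unaffected by this. Your treatment of the reaction term, via $\int_\Omega u_\varepsilon v_\varepsilon=\int_\Omega f$ and $W^{1,2}(\Omega)\hookrightarrow L^\infty(\Omega)$, is also slightly leaner than the paper's Young-inequality bound.

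One caveat concerns your closing aside. Suppose you mean Lemma \ref{l4.1} with $p=2$ for $(u_\varepsilon^{3/2})_x$, with the $L^p$ bounds handling the factor $u_\varepsilon^{1/2}$. That does not close: $\int_0^T\int_\Omega u_\varepsilon|(u_\varepsilon^{3/2})_x|^2$ is not controlled by $\int_0^T\int_\Omega|(u_\varepsilon^{3/2})_x|^2$ and $\sup_t\|u_\varepsilon(\cdot,t)\|_{L^p(\Omega)}$ without an $L^\infty$ bound. Since $u_\varepsilon|(u_\varepsilon^{3/2})_x|^2=\frac{9}{16}|(u_\varepsilon^2)_x|^2$, what is actually needed there is Lemma \ref{l4.1} with $p=3$, which is exactly the paper's route.
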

\begin{proof}
    Taking any $\varepsilon \in (0,1)$ and $T \in (0,T_{\max,\varepsilon})$, we look for a bound for
    $$
    ||u_{\varepsilon t}||_{L^2\big((0,T); (W^{1,2}(\Omega))^*\big)}^2 = \int_0^T \left(||u_{\varepsilon t}||_{\big(W^{1,2}(\Omega)\big)^*} \right)^2 dt,
    $$
    independent of $\varepsilon$. To bound the $\big(W^{1,2}(\Omega)\big)^*-$norm, we consider an arbitrary $\psi \in W^{1,2}(\Omega)$ with $||\psi||_{W^{1,2}(\Omega)} \leq 1$, and estimate the dual pairing integrating by parts as follows.
    \begin{equation}\label{4.2}
    \begin{split}
        \left | \int_\Omega u_{\varepsilon t} \psi \right| &= \left | \int_\Omega \Big((u_\varepsilon v_\varepsilon u_{\varepsilon x})_x -(u_\varepsilon^2 v_\varepsilon v_{\varepsilon x})_x + u_\varepsilon v_\varepsilon \Big) \psi \right| \\[1.5 ex]
       & \leq \left|\int_\Omega u_\varepsilon v_\varepsilon u_{\varepsilon x} \psi_x \right| + \left|\int_\Omega u_\varepsilon^2 v_\varepsilon v_{\varepsilon x} \psi_x \right| + \left|\int_\Omega u_\varepsilon v_\varepsilon \psi \right|, \quad \text{for all } t \in (0,T).
    \end{split}
    \end{equation}
    The last two terms can be easily estimated thanks to the $L^p(\Omega)$ bounds derived for $u_\varepsilon$ in Lemma \ref{l2.3} and the boundedness in $L^\infty(\Omega)$ of $v_\varepsilon$ and $v_{\varepsilon x}$. First, using Young's inequality, we have
    \begin{equation}\label{4.3}
    \begin{split}        
        \left|\int_\Omega u_\varepsilon^2 v_\varepsilon v_{\varepsilon x} \psi_x \right| &\leq \frac{||v_\varepsilon||_{L^\infty(\Omega)} \cdot ||v_{\varepsilon x}||_{L^\infty(\Omega)}}{2} \left( \int_\Omega u_\varepsilon^4 + \int_\Omega |\psi_x|^2  \right) \\[1.5 ex]
        &\leq c_1(T), \quad \text{for all } t \in (0,T).
    \end{split}
    \end{equation}
    Similarly, 
    \begin{equation}\label{4.4}
        \left|\int_\Omega u_\varepsilon v_\varepsilon \psi \right| \leq \frac{||v_\varepsilon||_{L^\infty(\Omega)}}{2} \left( \int_\Omega u_\varepsilon^2 + \int_\Omega |\psi|^2\right) \leq c_2(T), \quad \text{for all } t \in (0,T).
    \end{equation}
    Thus, substituting \eqref{4.3} and \eqref{4.4} into \eqref{4.2} one obtains
    \begin{equation}\label{4.5}
    \begin{split}
        \int_0^T \left| \int_\Omega u_{\varepsilon t} \psi \right|^2 &\leq \int_0^T \left(\left|\int_\Omega u_\varepsilon v_\varepsilon u_{\varepsilon x} \psi_x \right| + \big(c_1(T)+c_2(T)\big) \right)^2 \\[1.5 ex]
        &\leq c_3(T) + 2 \int_0^T \left(\int_\Omega u_\varepsilon v_\varepsilon| u_{\varepsilon x}| |\psi_x| \right)^2,
    \end{split}
    \end{equation}
    where $c_3(T):=2T  \big(c_1(T)+c_2(T)\big)^2>0$. 
    \\\\
    The final step consists of using the estimate obtained in Lemma \ref{l4.1} for $p = 3$ to bound the last term in \eqref{4.5}. To do so, the Cauchy-Schwarz inequality yields
    \begin{equation*}
        \begin{split}
             &\int_0^T\left(\int_\Omega u_\varepsilon v_\varepsilon| u_{\varepsilon x}| |\psi_x| \right)^2  \leq \int_0^T\left(\int_\Omega u_\varepsilon^2 v_\varepsilon^2 |u_{\varepsilon x}|^2 \right) \cdot\left(\int_\Omega |\psi_x|^2 \right) \\[1.5 ex]
            & \hspace{1.5 cm}  \leq \int_0^T \left(||v_\varepsilon||^2_{L^\infty(\Omega)} \int_\Omega u_\varepsilon^2 |u_{\varepsilon x}|^2 \right) \leq c_4(T) \int_0^T \int_\Omega \left|\big(u_\varepsilon^2\big)_x\right|^2 \leq C(T).
        \end{split}
    \end{equation*}
    Since the above estimate holds for any $\psi \in W^{1,2}(\Omega)$ with $||\psi||_{W^{1,2}(\Omega)} \leq 1$, we can conclude that $||u_{\varepsilon t}||_{L^2\big((0,T); (W^{1,2}(\Omega))^*\big)} \leq C(T),$ and thus the proof is complete.
\end{proof}
We next turn to the study of time regularity properties for $v_\varepsilon$. The main difference with respect to $u_\varepsilon$ lies on the apparent lack of information regarding $v_{\varepsilon t}$, as the second equation in \eqref{2.1} is elliptic. To overcome this, given $h>0$, we define
\begin{equation}\label{4.6}
    \delta_h v_\varepsilon (x,t) := v_\varepsilon(x,t+h) - v_\varepsilon(x,t), \quad \text{for all } x\in \Omega, ~t \in (0,T_{\max,\varepsilon} - h).
\end{equation}
By taking differences in the second equation in system \eqref{2.1}, one obtains
    \begin{equation*}
    \begin{split}
         - v_{\varepsilon xx}(x,t+h) + v_{\varepsilon xx}(x,t) = - u_\varepsilon(x,t+h)v_\varepsilon(x,t+h) + u_\varepsilon(x,t) v_{\varepsilon}(x,t)
         + f(x,t+h)-f(x,t),
    \end{split}
    \end{equation*}
for all $x \in \Omega$, $t \in (0,T-h)$. Rewriting the above expression, and similarly defining $\delta_h u_\varepsilon(x,t)$ and $\delta_h f(x,t)$ following \eqref{4.6}, we have that for any $t \in (0,T-h)$, $\delta_h v_\varepsilon$ satisfies the following linear elliptic equation
\begin{equation}\label{4.7}
   \begin{cases}
      -\big( \delta_h v_\varepsilon(x,t) \big)_{xx} + u_\varepsilon(x,t+h)\hspace{0.1 cm} \delta_h v_\varepsilon(x,t) = - v_\varepsilon(x,t) \delta_h u_\varepsilon(x,t) + \delta_h f(x,t), & x \in \Omega, \\[1.5 ex]
       \hspace{0.3 cm}\big(\delta_h v_\varepsilon(x,t) \big)_x = 0,  & x \in \partial \Omega,
   \end{cases}
\end{equation}
where the Neumann homogeneous boundary conditions for $v_\varepsilon$ are naturally inherited by $\delta_h v_\varepsilon$. We note that as a result of hypothesis \eqref{h2}, for any $T \in (0,T_{\max,\varepsilon})$ we have
\begin{equation}\label{f-aux}
     \int_\Omega \Big|\delta_h f(\cdot,t) \Big| \leq |\Omega| \cdot ||f_t||_{L^\infty(\Omega \times (0,T))} \cdot h, \quad \text{for all } t \in (0,T-h)
\end{equation}
which, in combination with the bound for $u_{\varepsilon t}$ in $L^2\big((0,T); (W^{1,2}(\Omega))^*\big)$ from Lemma \ref{l4.2} allow us to prove the following results.
\begin{lemma}\label{l4.3}
    Assume \eqref{h1} and \eqref{h2}. Then, for all $h>0$, and all $\varepsilon\in (0,1)$, given $T \in (0,T_{\max, \varepsilon})$, there exists $C(T)>0$ independent of $h$ and $\varepsilon$ such that $\delta_h v_\varepsilon$ as defined in \eqref{4.6} satisfies
    $$
    \Big|\Big|\big(\delta_h v_\varepsilon(\cdot, t)\big)_x\Big| \Big|_{L^2(\Omega)}^2 \leq C(T) \left( h^{1/2} + h \right), \quad \text{for all } t \in (0,T - h).
    $$
\end{lemma}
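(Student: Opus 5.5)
We test \eqref{4.7} with $\delta_h v_\varepsilon(\cdot,t)$ itself and integrate by parts, using the inherited Neumann condition. This gives
\begin{equation*}
\int_\Omega \big|(\delta_h v_\varepsilon)_x\big|^2 + \int_\Omega u_\varepsilon(\cdot,t+h)\,|\delta_h v_\varepsilon|^2 = -\int_\Omega v_\varepsilon(\cdot,t)\,\delta_h u_\varepsilon(\cdot,t)\,\delta_h v_\varepsilon(\cdot,t) + \int_\Omega \delta_h f(\cdot,t)\,\delta_h v_\varepsilon(\cdot,t).
\end{equation*}
The second term on the left is nonnegative by Lemma \ref{l2.1}, so we drop it. It then suffices to bound the two terms on the right by $C(T)h^{1/2}$ and $C(T)h$, respectively.

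\textbf{The $f$-term.} By Lemma \ref{l3.3}, $|\delta_h v_\varepsilon|\le 2c_3(T)$ on $\Omega\times(0,T-h)$. Combined with \eqref{f-aux}, this gives
\begin{equation*}
\Big|\int_\Omega \delta_h f\,\delta_h v_\varepsilon\Big| \le 2c_3(T)\,|\Omega|\,\|f_t\|_{L^\infty(\Omega\times(0,T))}\,h .
\end{equation*}

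\textbf{The $u$-term.} This is the main obstacle, since we have no pointwise control of $\delta_h u_\varepsilon$. The plan is to write
\begin{equation*}
\delta_h u_\varepsilon(\cdot,t) = \int_t^{t+h} u_{\varepsilon t}(\cdot,s)\,ds
\end{equation*}
and to use the duality pairing with the test function $\psi := v_\varepsilon(\cdot,t)\,\delta_h v_\varepsilon(\cdot,t)$, which is independent of $s$. This yields
\begin{equation*}
\Big|\int_\Omega v_\varepsilon\,\delta_h u_\varepsilon\,\delta_h v_\varepsilon\Big| \le \|\psi\|_{W^{1,2}(\Omega)} \int_t^{t+h} \|u_{\varepsilon t}(\cdot,s)\|_{(W^{1,2}(\Omega))^*}\,ds \le \|\psi\|_{W^{1,2}(\Omega)}\, h^{1/2}\, \|u_{\varepsilon t}\|_{L^2((0,T);(W^{1,2}(\Omega))^*)},
\end{equation*}
where the last step is Cauchy--Schwarz in time. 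The dual norm is controlled by Lemma \ref{l4.2}.

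It remains to bound $\|\psi\|_{W^{1,2}(\Omega)}$ uniformly in $\varepsilon$, $h$ and $t$. We have $\psi_x = v_{\varepsilon x}\,\delta_h v_\varepsilon + v_\varepsilon\,(\delta_h v_\varepsilon)_x$. By Lemma \ref{l3.3} and Lemma \ref{l3.4}, all of $v_\varepsilon$, $v_{\varepsilon x}$, $\delta_h v_\varepsilon$ and $(\delta_h v_\varepsilon)_x$ are bounded in $L^\infty$ by constants depending only on $T$, evaluated at times $t$ and $t+h$, both in $(0,T)$. Hence $\|\psi\|_{W^{1,2}(\Omega)}\le c(T)$, and this term is bounded by $C(T)h^{1/2}$.

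Two points need care. First, $u_\varepsilon$ is classical on $[0,T]$, so the identity for $\delta_h u_\varepsilon$ and the interchange of time integral and pairing are legitimate at the regularized level. Second, $\psi$ satisfies $\psi\in W^{1,2}(\Omega)$ because $v_\varepsilon(\cdot,t)\in C^2(\bar\Omega)$. If one preferred not to bound $(\delta_h v_\varepsilon)_x$ crudely, Young's inequality would absorb it into the left-hand side instead, at the cost of producing an $h$ rather than an $h^{1/2}$ contribution. The crude bound is enough here.

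Adding the two estimates gives
\begin{equation*}
\|(\delta_h v_\varepsilon(\cdot,t))_x\|_{L^2(\Omega)}^2 \le C(T)\big(h^{1/2}+h\big) \quad \text{for all } t\in(0,T-h),
\end{equation*}
with $C(T)$ independent of $h$ and $\varepsilon$.
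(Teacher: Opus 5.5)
Your proposal is correct and follows the paper's proof essentially step for step: you test \eqref{4.7} with $\delta_h v_\varepsilon$, drop the nonnegative $u_\varepsilon(\cdot,t+h)\,|\delta_h v_\varepsilon|^2$ term, bound the $f$-term via the $L^\infty$ bound from Lemma \ref{l3.3} together with \eqref{f-aux}, and treat the $u$-term as a $(W^{1,2}(\Omega))^*$--$W^{1,2}(\Omega)$ pairing against $v_\varepsilon\,\delta_h v_\varepsilon$, using $\delta_h u_\varepsilon=\int_t^{t+h}u_{\varepsilon t}$, Cauchy--Schwarz in time and Lemma \ref{l4.2}. The only cosmetic difference is that you bound $\|v_\varepsilon\,\delta_h v_\varepsilon\|_{W^{1,2}(\Omega)}$ through the $L^\infty$ bound on $v_{\varepsilon x}$ from Lemma \ref{l3.4}, whereas the paper uses the $W^{2,2}$ bound from that same lemma.
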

\begin{proof}
    We start by noting that for all $\varepsilon \in (0,1)$, given $T \in (0,T_{\max, \varepsilon})$ due to the $L^\infty$ bound for $v_\varepsilon$ from Lemma \ref{l3.3}, there exists $c_1(T)>0$ independent of $\varepsilon$ such that $||\delta_h v_\varepsilon||_{L^\infty(\Omega)}<c_1(T)$ for all $t \in (0,T-h)$.
    \\\\
    Next, testing equation \eqref{4.7} by $\delta_h v_\varepsilon(x,t)$ we get
    \begin{equation*}
    \begin{split}
    \int_\Omega &\Big|\big(\delta_h v_\varepsilon(\cdot,t) \big)_x \Big|^2 + \int_\Omega u_\varepsilon(\cdot,t+h)\big(\delta_h v_\varepsilon(\cdot,t)\big)^2 
    \\[1.5 ex]
    &  = - \int_\Omega  \delta_h u_\varepsilon(\cdot,t) \delta_h v_\varepsilon(\cdot,t) v_\varepsilon(\cdot,t)+ \int_\Omega \delta_h f(\cdot,t) \delta_h v_\varepsilon(\cdot,t) \quad \text{for all } t \in (0,T-h).
    \end{split}        
    \end{equation*}
    As the second term on the left-hand side is nonnegative, we obtain
    \begin{equation}\label{4.8}
        \begin{split}
            \int_\Omega \Big|\big(\delta_h v_\varepsilon(\cdot,t) \big)_x \Big|^2 & \leq \left| \int_\Omega  \delta_h u_\varepsilon(\cdot,t) \delta_h v_\varepsilon(\cdot,t)v_\varepsilon(\cdot,t)\right| + \left|\int_\Omega \delta_h f(\cdot,t) \delta_h v_\varepsilon(\cdot,t) \right| \\[1.5ex]
           & \leq \left| \int_\Omega \delta_h u_\varepsilon(\cdot,t) \delta_h v_\varepsilon(\cdot,t)  v_\varepsilon(\cdot,t) \right| + ||\delta_h v_\varepsilon(\cdot,t)||_{L^\infty(\Omega)} \int_\Omega \Big|\delta_h f(\cdot,t)\Big|\\[1.5ex]
           & \leq \left| \int_\Omega  \delta_h u_\varepsilon(\cdot,t) \delta_h v_\varepsilon(\cdot,t) v_\varepsilon(\cdot,t)\right| + c_1(T) \int_\Omega \Big|\delta_h f(\cdot,t)\Big|,
        \end{split}
    \end{equation}
    for all $t \in (0,T-h)$. The remaining of the proof consists of estimating the first term on the right-hand side of \eqref{4.8}.
    \\\\
    As by Lemma \ref{l3.3}, $v_\varepsilon$ is bounded in $W^{2,p}(\Omega)$ for all $p>1$ independently of $\varepsilon$, in particular for $p=2$, we obtain that $v_\varepsilon$ is uniformly bounded in $W^{1,2}(\Omega)$ by the continuity of the embedding $W^{2,2}(\Omega) \hookrightarrow W^{1,2}(\Omega)$. It follows directly that $\delta_h v_\varepsilon(\cdot,t)$ is also uniformly bounded in $W^{1,2}(\Omega)$, and a result of their $L^\infty(\Omega)$ estimates, we have that 
    $$||\delta_h v_\varepsilon(\cdot,t) v_\varepsilon(\cdot,t)||_{W^{1,2}(\Omega)}\leq c_2(T), \quad \text{for all } t \in (0,T-h),$$
    for some $c_2(T)>0$. Therefore, if $\delta_h u_\varepsilon \in (W^{1,2}(\Omega))^*$ for all $t \in (0,T-h)$, we can interpret $ \left| \int_\Omega  \delta_h u_\varepsilon(\cdot,t) \delta_h v_\varepsilon(\cdot,t) v_\varepsilon(\cdot,t)\right|$ as the dual pairing, leading to
    \begin{equation}\label{4.9}
    \begin{split}
         &\left| \int_\Omega  \delta_h u_\varepsilon(\cdot,t) \delta_h v_\varepsilon(\cdot,t) v_\varepsilon(\cdot,t)\right| = \Big|\big \langle  \delta_h u_\varepsilon(\cdot,t),\hspace{0.05 cm}\delta_h v_\varepsilon(\cdot,t)v_\varepsilon(\cdot,t)  \big\rangle \Big| \\[1.5 ex]
         &\hspace{1 cm}\leq ||\delta_h v_\varepsilon(\cdot,t) v_\varepsilon(\cdot,t)||_{W^{1,2}(\Omega)} \cdot ||\delta_h u_\varepsilon(\cdot,t)||_{(W^{1,2}(\Omega))^*} \leq c_2(T)\cdot||\delta_h u_\varepsilon(\cdot,t)||_{(W^{1,2}(\Omega))^*},
    \end{split}
    \end{equation}
    for all $t \in (0,T-h)$. Thus, we need to estimate the $(W^{1,2}(\Omega))^*-$norm of $\delta_h u_\varepsilon$, for which we make use of the bound for $u_{\varepsilon t}$ in $L^2\big((0,T);(W^{1,2}(\Omega))^*\big)$ proved in Lemma \ref{l4.2}. In particular, we have
    \begin{equation}\label{4.10}
        \delta_h u_\varepsilon(\cdot,t) = u_\varepsilon(\cdot,t+h) - u_\varepsilon(\cdot,t) = \int_t^{t+h} u_{\varepsilon t}(\cdot,s) ~ds \quad \text{in } (W^{1,2}(\Omega))^*.
    \end{equation}
    Consequently, using the Cauchy-Schwarz inequality
    \begin{equation}\label{4.11}
        \begin{split}
            ||\delta_h u_\varepsilon(\cdot, t)||&_{(W^{1,2}(\Omega))^*} = \sup_{\||\psi||_{W^{1,2}(\Omega)}\leq1} \left| \int_\Omega \delta_h u_\varepsilon(\cdot,t) \psi \right| = \sup_{\||\psi||_{W^{1,2}(\Omega)}\leq1} \left| \int_\Omega \left( \int_t^{t+h} u_{\varepsilon t}(\cdot,s) ~ds\right) \psi \right| 
            \\[1.5 ex] &\leq \sup_{\||\psi||_{W^{1,2}(\Omega)}\leq1} \int_t^{t+h}  \left|\int_\Omega u_{\varepsilon t}(\cdot,s) \psi \right|  = \int_t^{t+h} ||u_{\varepsilon t}||_{(W^{1,2}(\Omega))^*} \\[1.5 ex]
            &\leq \left(\int_t^{t+h} 1\right)^{1/2} \cdot \left( \int_t^{t+h} ||u_{\varepsilon t}||^2_{(W^{1,2}(\Omega))^*}\right)^{1/2} = h^{1/2} \cdot ||u_{\varepsilon t}||_{L^2\big((0,T); (W^{1,2}(\Omega))^*\big)} \\[1.5 ex] &\leq c_3(T) \hspace{0.1 cm}h^{1/2}, \quad \text{for all } t \in (0,T-h),
        \end{split}
    \end{equation}
    for some $c_3(T)>0$. Directly substituting \eqref{4.11} into \eqref{4.9} and subsequently into \eqref{4.8}, we arrive at
    \begin{equation*}
        \begin{split}
            \int_\Omega \Big|\big(\delta_h v_\varepsilon(\cdot,t) \big)_x \Big|^2 \leq c_2(T)\hspace{0.05 cm} c_3(T)\hspace{0.05 cm} h^{1/2} + c_1(T)  \int_\Omega \Big| \delta_h f(\cdot,t)\Big|, \quad \text{for all } t\in(0,T-h).
        \end{split}
    \end{equation*}
    and \eqref{f-aux} finishes the proof, with $C(T) := \max\{c_2(T) \hspace{0.05cm}c_3(T), c_1(T) \cdot |\Omega| \cdot ||f_t||_{L^\infty(\Omega \times (0,T))}\} >0$.
\end{proof}
Next, a similar bound can be proven for $\delta_h v_\varepsilon$ through the Poincaré inequality.
\begin{lemma}\label{l4.4}
    Let \eqref{h1} and \eqref{h2} hold. Then, for all $h>0$, and all $\varepsilon\in (0,1)$, given $T \in (0,T_{\max, \varepsilon})$, there exists $C(T)>0$ independent of $h$ and $\varepsilon$ such that $\delta_h v_\varepsilon$ as defined in \eqref{4.6} satisfies
    $$
    \Big|\Big|\delta_h v_\varepsilon(\cdot, t)\Big| \Big|_{L^2(\Omega)} \leq C(T) \left(h^{1/4} + h^{1/2} + h\right), \quad \text{for all } t \in (0,T - h).
    $$
\end{lemma}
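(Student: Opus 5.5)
We split $\delta_h v_\varepsilon(\cdot,t)$ into its spatial mean $\overline{\delta_h v_\varepsilon}(t) := \frac{1}{|\Omega|}\int_\Omega \delta_h v_\varepsilon(\cdot,t)$ and the mean-free remainder, and estimate each part separately:
$$
\|\delta_h v_\varepsilon(\cdot,t)\|_{L^2(\Omega)} \le \|\delta_h v_\varepsilon(\cdot,t) - \overline{\delta_h v_\varepsilon}(t)\|_{L^2(\Omega)} + |\Omega|^{1/2}\,|\overline{\delta_h v_\varepsilon}(t)|.
$$
For the first term we apply the Poincaré--Wirtinger inequality on the interval $\Omega=(0,L)$ and then Lemma \ref{l4.3}. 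This gives $\|\delta_h v_\varepsilon - \overline{\delta_h v_\varepsilon}\|_{L^2} \le c_P \|(\delta_h v_\varepsilon)_x\|_{L^2} \le c_P C(T)^{1/2}(h^{1/2}+h)^{1/2} \le c(T)(h^{1/4}+h^{1/2})$, using $\sqrt{a+b}\le\sqrt a+\sqrt b$. This accounts for the $h^{1/4}$ and $h^{1/2}$ contributions.

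The main obstacle is the mean. Equation \eqref{4.7} has no zeroth-order term with an $\varepsilon$-independent positive lower bound, so we cannot simply test it by $1$ and read off $\int \delta_h v_\varepsilon$. Testing \eqref{4.7} by $1$ gives
$$
\int_\Omega u_\varepsilon(\cdot,t+h)\,\delta_h v_\varepsilon = -\int_\Omega v_\varepsilon \,\delta_h u_\varepsilon + \int_\Omega \delta_h f.
$$
We write $\delta_h v_\varepsilon = \overline{\delta_h v_\varepsilon} + (\delta_h v_\varepsilon - \overline{\delta_h v_\varepsilon})$ on the left, which gives
$$
\overline{\delta_h v_\varepsilon}(t)\int_\Omega u_\varepsilon(\cdot,t+h) = -\int_\Omega u_\varepsilon(\cdot,t+h)\big(\delta_h v_\varepsilon - \overline{\delta_h v_\varepsilon}\big) - \int_\Omega v_\varepsilon\,\delta_h u_\varepsilon + \int_\Omega \delta_h f .
$$
By Lemma \ref{l2.2}, $\int_\Omega u_\varepsilon(\cdot,t+h) \ge \int_\Omega u_0 >0$. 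We assume $u_0\not\equiv 0$, which is implicitly used already in Lemma \ref{l3.3}. So it suffices to bound the right-hand side by $c(T)(h^{1/4}+h^{1/2}+h)$, and we take the three terms in turn.

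The first term is handled by Cauchy--Schwarz. Lemma \ref{l2.3} with $p=2$ bounds $\|u_\varepsilon(\cdot,t+h)\|_{L^2}$, and the mean-free bound from the first paragraph controls the other factor, giving $c(T)(h^{1/4}+h^{1/2})$.

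The second term is treated as a dual pairing, exactly as in \eqref{4.9}--\eqref{4.11}. We have $\|v_\varepsilon(\cdot,t)\|_{W^{1,2}}\le c(T)$ by Lemma \ref{l3.4}, and $\|\delta_h u_\varepsilon(\cdot,t)\|_{(W^{1,2})^*}\le c_3(T)h^{1/2}$ by Lemma \ref{l4.2}. Together these give $c(T)h^{1/2}$.

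The third term is at most $|\Omega|\,\|f_t\|_{L^\infty(\Omega\times(0,T))}\,h$ by \eqref{f-aux}.

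Dividing by $\int_\Omega u_0$ then yields $|\overline{\delta_h v_\varepsilon}(t)|\le c(T)(h^{1/4}+h^{1/2}+h)$ for all $t\in(0,T-h)$. Combining this with the first paragraph proves the lemma, with all constants independent of $h$ and $\varepsilon$.
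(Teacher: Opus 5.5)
Your proposal is correct and follows the paper's proof essentially step by step. Both use Poincar\'e--Wirtinger plus Lemma \ref{l4.3} for the mean-free part, then integrate \eqref{4.7} over $\Omega$ and split off the mean. The three right-hand terms are bounded by Cauchy--Schwarz, by the dual-pairing estimate \eqref{4.9}--\eqref{4.11}, and by \eqref{f-aux}, and one finally divides by $\int_\Omega u_\varepsilon(\cdot,t+h)\ge\int_\Omega u_0$. Your remark that this last step needs $u_0\not\equiv 0$ is accurate: the paper uses $\int_\Omega u_0>0$ there without listing it among the hypotheses.
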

\begin{proof}
    Given $\varepsilon \in (0,1)$, for any $T<T_{\max,\varepsilon}$, by considering 
    $$
    \overline{\delta_h v_\varepsilon}(t) := \frac{1}{|\Omega|} \int_\Omega \delta_h v_\varepsilon(\cdot,t), \quad t \in (0,T),
    $$
    the Poincaré inequality provides $c_1>0$, depending only on $\Omega$ such that
    \begin{equation}\label{4.12}
        \Big|\Big|\delta_h v_\varepsilon(\cdot,t) - \overline{\delta_h v_\varepsilon}(t)\Big|\Big|_{L^2(\Omega)} \leq c_1 \Big|\Big|\big(\delta_h v_\varepsilon(\cdot,t)\big)_x\Big|\Big|_{L^2(\Omega)}, \quad \text{for all } t \in (0,T).
    \end{equation}
    Thus, as $||(\delta_h v_\varepsilon)_x||_{L^2(\Omega)}$ is controlled by Lemma \ref{l4.3}, we only have to estimate the mean $\overline{\delta_h v_\varepsilon}$.
    \\\\
    To do so, as $\delta_h v_\varepsilon$ satisfies equation \eqref{4.7}, upon integrating over $\Omega$ one obtains
    \begin{equation}\label{4.13}
    \begin{split}
        \int_\Omega u_\varepsilon(\cdot,t+h) &\big(\delta_h v_\varepsilon(\cdot,t) - \overline{\delta_h v_\varepsilon}(t) \big) + \int_\Omega u_\varepsilon(\cdot, t+h) \hspace{0.1 cm}\overline{\delta_h v_\varepsilon}(t) \\[1.5 ex]&= - \int_\Omega v_\varepsilon(\cdot,t) \delta_h u_\varepsilon(\cdot,t) + \int_\Omega \delta_h f(\cdot,t), \quad \text{for all } t \in (0,T-h),
    \end{split}
    \end{equation}
    where again using the Cauchy-Schwarz inequality, we have that
    \begin{equation}\label{4.14}
        \begin{split}
             \left|\int_\Omega u_\varepsilon(\cdot,t+h) \big(\delta_h v_\varepsilon(\cdot,t) - \overline{\delta_h v_\varepsilon}(t) \big) \right| &\leq ||u_\varepsilon(\cdot,t+h)||_{L^2(\Omega)} \cdot   \Big|\Big|\delta_h v_\varepsilon(\cdot,t) - \overline{\delta_h v_\varepsilon}(t)\Big|\Big|_{L^2(\Omega)} \\[1.5 ex]
             &\leq c_2(T) \hspace{0.1 cm}\Big|\Big|\big(\delta_h v_\varepsilon(\cdot,t)\big)_x\Big|\Big|_{L^2(\Omega)}, \quad \text{for all } t \in (0,T-h),
        \end{split}
    \end{equation}
    for some $c_2(T)>0$, due to the $L^2$ bound for $u_\varepsilon$ from Lemma \ref{l2.3} and the Poincaré inequality \eqref{4.12}.
    \\\\
    Hence, as $\overline{\delta_h v_\varepsilon}$ is constant over $\Omega$, from \eqref{4.13} for all $t \in (0,T-h)$ we obtain
    \begin{equation*}
            \left|\hspace{0.05cm} \overline{\delta_h v_\varepsilon}(t) \right| \int_\Omega u_\varepsilon(\cdot,t+h) \leq \left| \int_\Omega  v_\varepsilon(\cdot,t) \delta_h u_\varepsilon(\cdot,t) \right| +\left| \int_\Omega \delta_h f(\cdot,t) \right| + c_2(T)\hspace{0.1 cm}\Big|\Big|\big(\delta_h v_\varepsilon(\cdot,t)\big)_x\Big|\Big|_{L^2(\Omega)}.
    \end{equation*}
    The first term on the right-hand side can be handled in the same way as done in \eqref{4.9}, since $v_\varepsilon$ is uniformly bounded in $W^{1,2}(\Omega)$ for all $t \in (0,T-h)$. Moreover, Lemma \ref{l4.3} provides control over $||(\delta_h v_\varepsilon)_x||_{L^2(\Omega)}$, and \eqref{f-aux} allows us to estimate the term involving $\delta_h f$. Consequently, we arrive at
    \begin{equation*}
            \left|\hspace{0.05cm} \overline{\delta_h v_\varepsilon}(t) \right| \int_\Omega u_\varepsilon(\cdot,t+h) \leq c_3(T) \left(h^{1/4} + h^{1/2} +  h\right), \quad \text{for all } t \in (0,T-h).
    \end{equation*}
    for some $c_3(T)>0$. Lastly, from Lemma \ref{l2.2}, we know that $\int_\Omega u_\varepsilon(\cdot,t+h) \geq \int_\Omega u_0(x)>0$, and therefore
    \begin{equation}\label{4.15}
        \left|\hspace{0.05cm}  \overline{\delta_h v_\varepsilon}(t) \right| \leq \frac{c_3(T)}{\displaystyle\int_\Omega u_0(x)} \left(h^{1/4} + h^{1/2} + h\right), \quad \text{for all } t \in (0,T-h).
    \end{equation}
    Having estimated $\overline{\delta_h v_\varepsilon}(t)$, we can conclude the proof, as we have that
    \begin{equation*}
    \begin{split}
        \Big|\Big|\delta_h v_\varepsilon(\cdot,t)\Big|\Big|_{L^2(\Omega)} &\leq \Big|\Big|\delta_h v_\varepsilon(\cdot,t) - \overline{\delta_h v_\varepsilon}(t)\Big|\Big|_{L^2(\Omega)} + \big|\big|\overline{\delta_h v_\varepsilon}(t)  \big| \big|_{L^2(\Omega)} \\[1.5ex]
        &\leq c_1(T) \Big|\Big|\big(\delta_h v_\varepsilon(\cdot,t)\big)_x\Big|\Big|_{L^2(\Omega)} + |\Omega|^{1/2} \left| \hspace{0.05cm} \overline{\delta_h v_\varepsilon}(t) \right|
        \\[1.5 ex] &\leq C(T) \left(h^{1/4} + h^{1/2} + h\right), \quad \text{for all } t \in (0,T-h),
    \end{split}
    \end{equation*}
    for some $C(T)>0$.
\end{proof}
Lastly, a combination of these two Lemmas results in the equicontinuity in time of $\{v_\varepsilon\}_{\varepsilon \in (0,1)}$.
\begin{lemma}\label{l4.5}
    Assume $u_0$ and $f$ satisfy \eqref{h1} and \eqref{h2}. Then, for any $\varepsilon \in (0,1)$ and $T < T_{\max, \varepsilon}$, there exists $\alpha>0$ and $C(T)>0$ independent of $\varepsilon$ such that for any $x \in \Omega$, $t_1,t_2 \in (0,T)$ with $|t_1-t_2|< 1$, one has
    $$
    \big|\hspace{0.02 cm}v_\varepsilon(x,t_2) - v_\varepsilon(x,t_1)\big| \leq C(T) \hspace{0.1 cm} \big|\hspace{0.02 cm}t_2 - t_1\big|^\alpha.
    $$
\end{lemma}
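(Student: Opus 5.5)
We combine Lemma \ref{l4.3} and Lemma \ref{l4.4} through a one-dimensional Gagliardo--Nirenberg type interpolation that converts $L^2$ control of $\delta_h v_\varepsilon$ and of its derivative into a pointwise estimate. Without loss of generality let $t_1<t_2$ and set $h:=t_2-t_1\in(0,1)$ and $t:=t_1\in(0,T-h)$, so that $v_\varepsilon(x,t_2)-v_\varepsilon(x,t_1)=\delta_h v_\varepsilon(x,t)$.

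First we simplify the rates. Since $h<1$, all powers are dominated by the smallest one: $h^{1/2}+h\le 2h^{1/2}$ and $h^{1/4}+h^{1/2}+h\le 3h^{1/4}$. Lemma \ref{l4.3} and Lemma \ref{l4.4} therefore give, for all $t\in(0,T-h)$,
$$
\big\|(\delta_h v_\varepsilon(\cdot,t))_x\big\|_{L^2(\Omega)}\le c_1(T)\,h^{1/4}, \qquad \big\|\delta_h v_\varepsilon(\cdot,t)\big\|_{L^2(\Omega)}\le c_2(T)\,h^{1/4},
$$
with constants independent of $\varepsilon$ and $h$.

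Next we obtain the pointwise bound in one dimension. For $w:=\delta_h v_\varepsilon(\cdot,t)\in C^1(\bar\Omega)$, choose $y\in\bar\Omega$ with $|w(y)|\le |\Omega|^{-1/2}\|w\|_{L^2(\Omega)}$; such a point exists by the mean value theorem for integrals applied to $w^2$. For any $x\in\Omega$, writing $w(x)=w(y)+\int_y^x w_x$ and applying Cauchy--Schwarz gives
$$
\|w\|_{L^\infty(\Omega)}\le |\Omega|^{-1/2}\|w\|_{L^2(\Omega)}+|\Omega|^{1/2}\|w_x\|_{L^2(\Omega)}\le C(T)\,h^{1/4}.
$$
This yields the claim with $\alpha=1/4$.

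Finally we check the edge cases. The endpoint situations $t_2$ close to $T$ are covered because Lemma \ref{l4.3} and Lemma \ref{l4.4} hold on $(0,T-h)$ and $t_1<T-h$ exactly when $t_2<T$. The only point to verify is that the constants in those lemmas do not depend on $h$. Inspecting their proofs, they rely only on Lemma \ref{l3.3}, Lemma \ref{l3.4}, Lemma \ref{l4.2} and \eqref{f-aux} on $(0,T)$, none of which involves $h$. So no step is a real obstacle; the interpolation step is the only nontrivial ingredient, and in one dimension it is elementary. One could alternatively interpolate with the uniform $C^{1,\alpha}$ bound of Lemma \ref{l3.4}, but that is unnecessary here.
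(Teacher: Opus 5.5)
Your proof is correct and follows essentially the same route as the paper. Both set $h=|t_2-t_1|$ and $t=\min\{t_1,t_2\}$, control $\|\delta_h v_\varepsilon(\cdot,t)\|_{L^\infty(\Omega)}$ by its $W^{1,2}(\Omega)$-norm via the one-dimensional embedding, and then insert Lemmas \ref{l4.3} and \ref{l4.4} to obtain $\alpha=1/4$. The only differences are that you prove the embedding directly, and that you correctly take the square root of the squared bound in Lemma \ref{l4.3}, a step the paper leaves implicit.
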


\begin{proof}
    For our one-dimensional setting, as $\Omega$ is bounded, the continuity of the embedding $W^{1,2}(\Omega) \hookrightarrow L^\infty(\Omega)$ provides $c_1>0$ such that for any $w \in W^{1,2}(\Omega)$
    $$
    ||w||_{L^\infty(\Omega)} \leq c_1 ||w||_{W^{1,2}(\Omega)}.
    $$
    In particular, for any $x\in \Omega$, taking $h := |t_1-t_2|<1$ and $t := \min\{t_1,t_2\}$, we have
    \begin{equation*}
        \big|\hspace{0.05cm}v_\varepsilon(x,t_2) - v_\varepsilon(x,t_1) \big| \leq \big|\big|\delta_h v_\varepsilon(\cdot,t)\big|\big|_{L^\infty(\Omega)} \leq c_1 \left( \big|\big|\delta_h v_\varepsilon(\cdot,t)\big|\big|_{L^2(\Omega)} + \big|\big|\big(\delta_h(v_\varepsilon(\cdot,t)\big)_x \big|\big|_{L^2(\Omega)}\right).
    \end{equation*}
    Thus, invoking Lemmas \ref{l4.3} and \ref{l4.4}, considering $h=1/4$, we obtain
    \begin{equation}\label{4.17}
        \big|\hspace{0.05cm}v_\varepsilon(x,t_2) - v_\varepsilon(x,t_1) \big| \leq c_2(T) h^\alpha = c_2(T) |t_1-t_2|^\alpha,
    \end{equation}
    finishing the proof.
\end{proof}
Having obtained the desired time regularity for $v_\varepsilon$, one important aspect about $u_\varepsilon$ has to still be taken into account. As unfortunately Lemma \ref{l4.1} only covers the case $p>1$, we do not have any direct control over $u_{\varepsilon x}$. In view of applying the Aubin-Lions lemma to obtain a convergent subsequence, we extend the analysis to the sequence $\big(u_{\varepsilon}^{\frac{p+1}{2}}\big)_{\varepsilon \in (0,1)}$ for $p>1$ to match the result in Lemma \ref{l4.1}. For the time derivative of this new sequence, a similar bound can be proved, although in a less restrictive space.
\begin{lemma}\label{l4-aux}
    Let \eqref{h1} and \eqref{h2} hold and let $p>1$. Then, for all $\varepsilon \in (0,1)$ and all $T \in (0,T_{\max, \varepsilon})$, one can find $C(p,T)>0$ independent of $\varepsilon$ satisfying
    $$
    \Big|\Big|\partial_t\big(u_{\varepsilon}^{\frac{p+1}{2}}\big) \Big|\Big|_{L^1\big((0,T); (W^{3,2}(\Omega))^*\big)} \leq C(p,T).
    $$
\end{lemma}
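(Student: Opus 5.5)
Set $q := \frac{p+1}{2} > 1$. For fixed $\varepsilon$ and $t$, I would test the first equation of \eqref{2.1} against $q u_\varepsilon^{q-1}\psi$, where $\psi \in W^{3,2}(\Omega)$ satisfies $\|\psi\|_{W^{3,2}(\Omega)}\le 1$. Because $W^{3,2}(\Omega)\hookrightarrow C^2(\bar\Omega)$ in one dimension, this gives $\|\psi\|_{L^\infty}+\|\psi_x\|_{L^\infty}+\|\psi_{xx}\|_{L^\infty}\le c$. I then aim for a pointwise-in-time bound
$$\Big|\int_\Omega \partial_t\big(u_\varepsilon^{q}\big)\psi\Big| \le c(p,T)\Big(1+\int_\Omega u_\varepsilon^{p-1}v_\varepsilon|u_{\varepsilon x}|^2 + \int_\Omega u_\varepsilon^{r}\Big)$$
for some exponent $r$. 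Integrating this over $(0,T)$ and applying Lemma \ref{l2.3} (both its $L^r$ bound and its gradient bound) together with Lemmas \ref{l3.3} and \ref{l3.4} then yields the claimed $L^1((0,T);(W^{3,2}(\Omega))^*)$ bound.

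Integrating by parts, using $u_{\varepsilon x}=0$ on $\partial\Omega$, gives
$$\int_\Omega \partial_t(u_\varepsilon^q)\psi = -q(q-1)\int_\Omega u_\varepsilon^{q-1} v_\varepsilon |u_{\varepsilon x}|^2\psi - q\int_\Omega u_\varepsilon^{q}v_\varepsilon u_{\varepsilon x}\psi_x + q(q-1)\int_\Omega u_\varepsilon^{q}v_\varepsilon v_{\varepsilon x}u_{\varepsilon x}\psi + q\int_\Omega u_\varepsilon^{q+1}v_\varepsilon v_{\varepsilon x}\psi_x + q\int_\Omega u_\varepsilon^q v_\varepsilon\psi .$$
The terms should be handled one at a time.

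\textbf{First term.} The weight is $u_\varepsilon^{q-1}=u_\varepsilon^{(p-1)/2}$. I would write $u_\varepsilon^{q-1}\le 1+u_\varepsilon^{p-1}$, since $(p-1)/2\le p-1$ fails only when $u_\varepsilon<1$. Here lies the main obstacle: the piece $\int_\Omega v_\varepsilon |u_{\varepsilon x}|^2$ with weight $u_\varepsilon^0$ is exactly the uncontrolled case $p=1$ excluded from Lemmas \ref{l2.3} and \ref{l4.1}. To avoid it, I would not split but instead integrate by parts once more. Writing $u_\varepsilon^{q-1}|u_{\varepsilon x}|^2 = \frac1q (u_\varepsilon^q)_x u_{\varepsilon x}$ does not eliminate the derivative, so a better choice is to test with $\psi$ directly, moving all derivatives onto $\psi$ by the identity $q u_\varepsilon^{q-1}(u_\varepsilon v_\varepsilon u_{\varepsilon x})_x$.

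If this route fails, I would fall back on a weaker variant: prove the lemma for the exponent range where $q-1\ge p'-1$ for some admissible $p'>1$, i.e. $\frac{p-1}{2} = p'-1$ with $p'=\frac{p+1}{2}>1$. This is automatically satisfied. So the first term is actually bounded by $c\int_\Omega u_\varepsilon^{p'-1}v_\varepsilon|u_{\varepsilon x}|^2$ with $p'=\frac{p+1}{2}>1$, and Lemma \ref{l2.3} applied with $p'$ controls its time integral. This resolves the apparent obstacle, and it is where $p>1$ is used.

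\textbf{Remaining terms.} For the second and third terms I would use Young's inequality:
$$u_\varepsilon^q v_\varepsilon|u_{\varepsilon x}|\le u_\varepsilon^{p'-1}v_\varepsilon|u_{\varepsilon x}|^2 + u_\varepsilon^{2q-p'+1}v_\varepsilon ,$$
and the same with the extra bounded factor $|v_{\varepsilon x}|$, which is controlled by Lemma \ref{l3.4}. The zero-order remainders, as well as the fourth and fifth terms, involve only powers of $u_\varepsilon$ times $v_\varepsilon$, $v_{\varepsilon x}$ and $\psi,\psi_x$. These are bounded uniformly on $(0,T)$ by Lemma \ref{l2.3} together with the bounds $\|v_\varepsilon\|_{L^\infty}$ from Lemma \ref{l3.3} and $\|v_{\varepsilon x}\|_{L^\infty}$ from Lemma \ref{l3.4}. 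Taking the supremum over $\psi$ and integrating in time completes the argument; the $W^{3,2}$ dual is needed only to secure $\psi,\psi_x\in L^\infty$, with room to spare.
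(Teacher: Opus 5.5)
Your proposal is correct and is essentially the paper's argument: the same five terms from testing against $\psi$ with $\|\psi\|_{W^{3,2}(\Omega)}\le 1$, the embedding $W^{3,2}(\Omega)\hookrightarrow W^{1,\infty}(\Omega)$, Young's inequality on the gradient terms, and the uniform bounds of Lemmas \ref{l2.3}, \ref{l3.3} and \ref{l3.4} integrated over $(0,T)$. Your final bookkeeping controls all three gradient terms by $\int_0^T\int_\Omega u_\varepsilon^{\frac{p-1}{2}}v_\varepsilon|u_{\varepsilon x}|^2$, i.e. Lemma \ref{l2.3} at exponent $\frac{p+1}{2}$, and this is the accurate choice: the quantities $|(u_\varepsilon^{\frac{p+1}{2}})_x|^2$ and $|(u_\varepsilon^{p+2})_x|^2$ cited in \eqref{4a2}--\eqref{4a3} do not dominate $u_\varepsilon^{\frac{p-1}{2}}|u_{\varepsilon x}|^2$ and $u_\varepsilon^{p+1}|u_{\varepsilon x}|^2$ where $u_\varepsilon<1$, so you should simply delete the exploratory detour in your first-term paragraph.
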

\begin{proof}
    The proof strategy is very similar to the one used in Lemma \ref{l4.2}, making use of the integrability property in Lemma \ref{l4.1}. This time, taking $\varepsilon \in (0,1)$ and $T \in (0,T_{\max, \varepsilon})$, we bound
    $$
    \Big|\Big| \partial_t\big(u_{\varepsilon}^{\frac{p+1}{2}}\big) \Big|\Big|_{L^1\big((0,T); (W^{3,2}(\Omega))^*\big)} = \int_0^T \Big|\Big|\partial_t \big(u_{\varepsilon}^{\frac{p+1}{2}}\big) \Big|\Big|_{\big(W^{3,2}(\Omega)\big)^*}  dt,
    $$
    independently of $\varepsilon$. Taking an arbitrary $\psi \in W^{3,2}(\Omega)$ with $||\psi||_{W^{3,2}(\Omega)} \leq 1$, we bound the norm in $(W^{3,2}(\Omega))^*$.
    By computing the time derivative, we obtain
    \begin{equation}\label{4a1}
    \begin{split}
      \frac{2}{p+1}  \int_\Omega \big(u_\varepsilon^{\frac{p+1}{2}}\big)_t \hspace{0.1 cm} \psi = \int_\Omega u_\varepsilon^{\frac{p-1}{2}} u_{\varepsilon t} \psi =      
      \frac{p-1}{2}\int_\Omega u_\varepsilon^{\frac{p-1}{2}}v_\varepsilon u_{\varepsilon x}^2 \psi + \int_\Omega u_\varepsilon^{\frac{p+1}{2}} v_\varepsilon u_{\varepsilon x} \psi_x \\[1.5 ex]
      +\frac{p-1}{2}\int_\Omega u_\varepsilon^{\frac{p+1}{2}} v_\varepsilon u_{\varepsilon x} v_{\varepsilon x} \psi + \int_\Omega u_\varepsilon^{\frac{p+3}{2}} v_\varepsilon v_{\varepsilon x} \psi_x + \int_\Omega u_\varepsilon^{\frac{p+1}{2}} v_\varepsilon \psi =: \sum_{i=1}^5 I_i ,
    \end{split}
    \end{equation}
    for all $t \in (0,T)$. To bound the terms $I_i$, $i \in \{1,\dots, 5\}$, we rely on the continuity of embedding $W^{3,2}(\Omega) \hookrightarrow W^{1,\infty}(\Omega)$, providing $c>0$ such that $||\psi||_{L^\infty(\Omega)} + ||\psi_x||_{L^\infty(\Omega)} \leq c$. In this way, for all $t \in (0,T)$ we have
    \begin{equation}\label{4a2}
        |I_1| \leq \frac{p-1}{2} ||\psi||_{L^\infty(\Omega)} ||v_\varepsilon||_{L^\infty(\Omega)} \int_\Omega u_\varepsilon^{\frac{p-1}{2}} |u_{\varepsilon x}|^2 \leq c_1(p,T) \int_\Omega \Big|\big(u_{\varepsilon}^\frac{p+1}{2}\big)_x \Big|^2,
    \end{equation}
    and Young's inequality yields 
    \begin{equation}\label{4a3}
        |I_2| \leq \int_\Omega u_\varepsilon^{\frac{p+1}{2}} v_\varepsilon |u_{\varepsilon x}|\hspace{0.05cm} |\psi_x| \leq \frac{1}{2} \int_\Omega u_\varepsilon^{p+1} |u_{\varepsilon x}|^2 + \frac{1}{2}\int_\Omega v_\varepsilon^2 |\psi_x|^2 \leq c_2(p,T)\left( 1+ \int_\Omega \Big|\big(u_{\varepsilon}^{p+2}\big)_x \Big|^2 \right),   
    \end{equation}
    as well as
    \begin{equation}\label{4a4}
    \begin{split}
        |I_3| \leq \frac{p-1}{2} \int_\Omega u_\varepsilon^{\frac{p+1}{2}} v_\varepsilon |u_{\varepsilon x}| \hspace{0.05 cm} |v_{\varepsilon x}| \hspace{0.05 cm} |\psi| 
        &\leq ||v_\varepsilon||_{L^\infty(\Omega)} ||v_{\varepsilon x}||_{L^\infty(\Omega)} \frac{p-1}{4} \left(\int_\Omega u_\varepsilon^{p+1} |u_{\varepsilon x}|^2 + \int_\Omega |\psi|^2\right)\\
        &\leq c_3(p,T) \left( 1 + \int_\Omega \Big|\big(u_{\varepsilon}^{p+2}\big)_x \Big|^2  \right), \quad \text{for all } t \in (0,T).
    \end{split}
    \end{equation}
    Similarly, making use of the $L^p$ estimates for $u_\varepsilon$ obtained in Lemma \ref{l2.3} we have
    \begin{equation}\label{4a5}
        |I_4| \leq \frac{||v_\varepsilon||_{L^\infty(\Omega)} ||v_{\varepsilon x}||_{L^\infty(\Omega)}}{2} \left(\int_\Omega u_\varepsilon^{p+3} + \int_\Omega |\psi_x|^2\right) \leq c_4(p,T), \quad \text{ for all } t \in (0,T),
    \end{equation}
    and lastly
    \begin{equation}\label{4a6}
        |I_5| \leq \frac{||v_\varepsilon||_{L^\infty(\Omega)}}{2} \left( \int_\Omega u_\varepsilon^{p+1} + \int_\Omega |\psi|^2\right) \leq c_5(p,T), \quad \text{ for all } t \in (0,T).
    \end{equation}
    Therefore, from \eqref{4a2}-\eqref{4a6}, there exists $c_6(p,T)>0$ such that
    $$
   \Big|\Big|\partial_t \big(u_{\varepsilon}^{\frac{p+1}{2}}\big) \Big|\Big|_{\big(W^{3,2}(\Omega)\big)^*}\leq c_6(p,T) \left( 1+ \int_\Omega \Big|\big(u_{\varepsilon}^{p+2}\big)_x \Big|^2 + \int_\Omega \Big|\big(u_{\varepsilon}^\frac{p+1}{2}\big)_x \Big|^2 \right), \quad \text{ for all } t \in (0,T),
    $$
    and a time integration in view of Lemma \ref{l4.1} finishes the proof.
\end{proof}

\section{Global existence of the limit $(u,v)$}\label{s5}

After the previous lemmas ensuring boundedness and regularity properties of the sequences $\big(u_\varepsilon^\frac{p+1}{2}\big)_{\varepsilon \in (0,1)}$ and $(v_\varepsilon)_{\varepsilon \in (0,1)}$, in this final section we show that all regularized solutions in fact exist globally in time, and that a subsequence can be extracted such that it converges to the global weak solution of system \eqref{1.1}.

\begin{lemma}\label{l5.1}
    Assume \eqref{h1} and \eqref{h2}. Then, for all $\varepsilon \in (0,1)$, $T_{\max, \varepsilon} = \infty$.
\end{lemma}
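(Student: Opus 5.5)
We argue by contradiction, using the extensibility criterion \eqref{2.06} from Lemma \ref{l2.1}. Suppose $T_{\max,\varepsilon}<\infty$ for some fixed $\varepsilon\in(0,1)$. Then it suffices to derive a bound for $\|u_\varepsilon(\cdot,t)\|_{L^\infty(\Omega)}$ on $(0,T_{\max,\varepsilon})$ that depends on $\varepsilon$ and $T_{\max,\varepsilon}$, since this contradicts \eqref{2.06}. The bounds of Sections \ref{s2}--\ref{s4} were stated for $T<T_{\max,\varepsilon}$. However, their constants depend only on $T$, $u_0$, $f$ and $\Omega$, are nondecreasing in $T$, and are finite for $T=T_{\max,\varepsilon}<\infty$ because $f\in C^1(\bar\Omega\times[0,\infty))$. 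So they extend to the whole interval $(0,T_{\max,\varepsilon})$. In particular, Lemma \ref{l2.3} gives $\sup_{t<T_{\max,\varepsilon}}\|u_\varepsilon(\cdot,t)\|_{L^p(\Omega)}<\infty$ for every finite $p$, and Lemmas \ref{l3.3} and \ref{l3.4} give constants $0<c_1\le v_\varepsilon\le c_2$ and $|v_{\varepsilon x}|\le c_3$ on $\bar\Omega\times(0,T_{\max,\varepsilon})$.

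To pass from $L^p$ to $L^\infty$, I would run a Moser--Alikakos iteration on the identity \eqref{2.2}. For $p\ge2$, write the cross term as
\[
(p-1)\int_\Omega u_\varepsilon^p v_\varepsilon u_{\varepsilon x}v_{\varepsilon x}\le \frac{p-1}{2}\int_\Omega u_\varepsilon^{p-1}v_\varepsilon|u_{\varepsilon x}|^2+\frac{p-1}{2}c_2c_3^2\int_\Omega u_\varepsilon^{p+1}.
\]
Combined with $v_\varepsilon\ge c_1$, this yields
\[
\frac{d}{dt}\int_\Omega u_\varepsilon^p+\frac{2c_1(p-1)p}{(p+1)^2}\int_\Omega\big|(u_\varepsilon^{\frac{p+1}{2}})_x\big|^2\le Cp^2\int_\Omega u_\varepsilon^{p+1}+Cp\int_\Omega u_\varepsilon^p .
\]
Set $w=u_\varepsilon^{(p+1)/2}$. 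The one-dimensional Gagliardo--Nirenberg inequality $\|w\|_{L^2}^2\le \eta\|w_x\|_{L^2}^2+C\eta^{-1/2}\|w\|_{L^{1}}^2$ then lets the diffusion absorb the $u_\varepsilon^{p+1}$ term. Here $\|w\|_{L^1}=\int_\Omega u_\varepsilon^{(p+1)/2}$, which involves a lower power of $u_\varepsilon$. This gives a recursive bound of the form $\sup_t\int_\Omega u_\varepsilon^{p}\le (Cp)^{\kappa}\max\{1,\sup_t(\int_\Omega u_\varepsilon^{p/2})^2\}$ up to harmless exponent shifts. Iterating along $p_k=2^k$ gives $\sup_{t<T_{\max,\varepsilon}}\|u_\varepsilon\|_{L^\infty(\Omega)}<\infty$.

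A simpler alternative is available because $\varepsilon$ is fixed and the time horizon is finite. The first equation in \eqref{2.1} can be written in nondivergence form $u_t=a u_{xx}+b u_x+c u$, with $a=u_\varepsilon v_\varepsilon$. Using $v_{\varepsilon xx}=u_\varepsilon v_\varepsilon-f$, the reaction coefficient $c$ contains $-u_\varepsilon^2v_\varepsilon^2$ and is bounded above by a constant times $(1+u_\varepsilon)$. At a spatial maximum one has $u_{xx}\le0$ and $u_x=0$, so $\frac{d}{dt}\max u\le \max u\,(C+C\max u-c_1^2\max u^2)$. The term $-c_1^2\max u^2$ dominates, which bounds $\max_{\bar\Omega} u_\varepsilon$ by $\max\{\|u_0\|_\infty+1, K\}$ with $K$ depending on $c_1,c_2,c_3,\|f\|_\infty$. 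I expect this comparison to be the cleanest route. I would check carefully that the expansion of $-(u^2vv_x)_x$ equals $-2uvv_xu_x-u^2v_x^2-u^2v(uv-f)$, which gives a negative cubic term $-u^3v^2$ in the reaction, and that the boundary condition $u_{\varepsilon x}=0$ allows a boundary maximum. The main obstacle is justifying this at the boundary (Hopf-type or even reflection) and the regularity needed for the $\limsup$ argument. If that fails, I fall back on the iteration, where the obstacle is tracking the $p$-dependence of the constants.

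With the $L^\infty$ bound on $(0,T_{\max,\varepsilon})$ in hand, \eqref{2.06} is violated. Hence $T_{\max,\varepsilon}=\infty$.
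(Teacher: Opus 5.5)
Your proof is correct, and your preferred route is genuinely different from the paper's.

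\textbf{How the paper argues.} It writes the first equation as $u_{\varepsilon t}=(D_\varepsilon u_{\varepsilon x})_x+(f_\varepsilon)_x+g_\varepsilon$ with $D_\varepsilon=u_\varepsilon v_\varepsilon$. It then invokes Lemma A.1 of Tao--Winkler, a ready-made Moser iteration fed by the $L^p$ bounds of Lemma \ref{l2.3}. Most of the paper's proof goes into checking that lemma's hypotheses. These are $\inf D_\varepsilon>0$, obtained via $u_\varepsilon\ge\varepsilon$ and $v_\varepsilon\ge c_1$, and $D_\varepsilon\in C^1$. The latter forces the paper to differentiate the elliptic equation in $t$ and prove continuity of $v_{\varepsilon t}$.

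\textbf{Your fallback iteration.} This is a hand-made version of that lemma. It has the small advantage of needing no lower bound for $u_\varepsilon$, because testing by $u_\varepsilon^{p-1}$ directly produces $u_\varepsilon^{p-1}v_\varepsilon|u_{\varepsilon x}|^2$. To reach your $\max$-type recursion, absorb the linear term $Cp\int_\Omega u_\varepsilon^p$ by the same interpolation rather than by Gronwall. Otherwise a factor $e^{CT}$ accumulates at every step and the iteration does not close.

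\textbf{Your main route.} You substitute $v_{\varepsilon xx}=u_\varepsilon v_\varepsilon-f$ to expose the absorbing term $-u_\varepsilon^3v_\varepsilon^2$ and evaluate the equation at a spatial maximum. This is much more elementary than the paper's argument. It uses only $u_\varepsilon\in C^{2,1}(\bar\Omega\times(0,T))$, which the construction in Lemma \ref{l2.1} provides, together with $L^\infty$ information on $v_\varepsilon$ and $f$. It needs no $L^p$ bounds with $p>1$, no lower bound for $u_\varepsilon$, no bound on $v_{\varepsilon x}$, and no time regularity of $v_\varepsilon$.

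\textbf{One claim to correct.} The lower bound in Lemma \ref{l3.3} is proportional to $\int_\Omega f(\cdot,t)$. So $\inf_{t<T}c_1(t)$ is not governed by your ``nondecreasing in $T$'' argument, and under \eqref{h2} alone it can be $0$. For instance, if $f(\cdot,t_0)\equiv0$ then $v_\varepsilon(\cdot,t_0)\equiv0$. The paper relies on the same uniform lower bound, and so does your iteration. Your maximum-principle argument, however, can dispense with it.

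Write $w=u_\varepsilon v_\varepsilon$. Then $u_\varepsilon^2v_\varepsilon f-u_\varepsilon^3v_\varepsilon^2=u_\varepsilon w(f-w)\le\frac{1}{4}u_\varepsilon f^2$. Hence at a maximum point
$u_{\varepsilon t}\le u_\varepsilon\big(\frac{1}{4}\|f\|_{L^\infty(\Omega\times(0,T_{\max,\varepsilon}))}^2+\sup v_\varepsilon\big)$,
and therefore $\max_{\bar\Omega}u_\varepsilon(\cdot,t)\le(\|u_0\|_{L^\infty(\Omega)}+1)e^{Ct}$. This bound is finite on $(0,T_{\max,\varepsilon})$ whenever $T_{\max,\varepsilon}<\infty$, which contradicts \eqref{2.06}. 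The only input needed is the upper bound for $v_\varepsilon$ from Lemma \ref{l3.3}, and that one does extend to $T=T_{\max,\varepsilon}$.
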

\begin{proof}
    Due to the $L^p$ bounds for $u_\varepsilon$ provided by Lemma \ref{l2.3} and the regularity of $v_\epsilon$ entailed by Lemmas \ref{l3.3} and \ref{l3.4}, a standard argument can be applied to show that $T_{\max,\varepsilon}$ cannot be finite. 
    \\\\
    In particular, for all $\varepsilon \in (0,1)$, we rewrite the first equation in \eqref{2.1} as
    \begin{equation}\label{5.1}
    u_{\varepsilon t} = (D_\varepsilon(x,t,u) u_{\varepsilon x})_x + (f_\varepsilon(x,t))_x + g_\varepsilon(x,t), \quad \text{for all } x \in \Omega, ~t \in (0,T_{\max, \varepsilon}),
    \end{equation}    
    where
    $$D_\varepsilon(x,t) = u_\varepsilon v_\varepsilon, \quad f_\varepsilon(x,t) = -u_\varepsilon^2 v_\varepsilon v_{\varepsilon x}, \quad g_\varepsilon(x,t) = u_\varepsilon v_\varepsilon, \quad (x,t) \in \Omega \times (0,T_{\max,\varepsilon}).$$
    By contradiction, if for any $\varepsilon\in(0,1)$, $T_{\max, \varepsilon}<\infty$, then by \eqref{2.06} $\lim_{t \nearrow T_{\max,\varepsilon}} ||u_\varepsilon||_{L^\infty(\Omega)} = \infty$. We use Lemma A.1 in \cite{TaoWinklerA1} to prove that indeed, $ \sup_{t \in (0,T_{\max,\varepsilon})}||u_\varepsilon||_{L^\infty(\Omega)}<\infty$, and thus $T_{\max,\varepsilon}=\infty$.
    \\\\
    For any fixed $\varepsilon \in (0,1)$, by a comparison argument,
    $u_\varepsilon(x,t) \geq \varepsilon >0$ for all $x \in \Omega, ~t \in (0,T_{\max, \varepsilon})$ and by Lemma \ref{l3.3}, there exists $c_1>0$ such that $v_\varepsilon(x,t) > c_1$, for all $x \in \Omega, ~t \in (0,T_{\max, \varepsilon})$. Consequently, $\inf_{(x,t) \in \Omega \times (0,T_{\max, \varepsilon})} D(x,t)>0$.
    \\\\
    Moreover, using Lemmas \ref{l2.3} and \ref{l3.3}, for any $p>1$, we obtain that
    $$
    \sup_{t \in (0,T_{\max,\varepsilon})} \left \{||u_\varepsilon(\cdot,t)||_{L^p(\Omega)} + ||f_\varepsilon(\cdot,t)||_{L^p(\Omega)} + ||g_\varepsilon(\cdot,t)||_{L^p(\Omega)} \right\} <\infty.
    $$
    To apply Lemma A.1 in \cite{TaoWinklerA1}, we only need to ensure that $D_\varepsilon \in C^1\big(\bar{\Omega} \times [0,T_{\max,\varepsilon}) \times [0,\infty)\big)$. Lemma \ref{l2.1} already provides time differentiability for $u_\varepsilon$, but for $v_\varepsilon$, Lemma \ref{l4.5} only gives time continuity. However, differentiating the second equation in \eqref{2.1}, one obtains
    \begin{equation}\label{5.2}
    -(v_{\varepsilon t})_{xx} + u_\varepsilon v_{\varepsilon t} = - u_{\varepsilon t} v_\varepsilon + f_t, \quad x \in \Omega, ~t\in (0,T_{\max,\varepsilon}).
    \end{equation}
    For any fixed $\varepsilon \in (0,1)$, the operator $A_\varepsilon(t) w := -w_{xx} + u_\varepsilon(t) w$ is invertible due to the strict positivity of $u_\varepsilon$. In addition, $h_\varepsilon := - u_{\varepsilon t} v_\varepsilon + f_t \in C^0\big(\Omega \times (0,T_{\max, \varepsilon})\big)$ with $\sup_{t \in (0,T_{\max, \varepsilon})}||h_\varepsilon(\cdot,t)||_{L^p(\Omega)}<\infty$ for any $p>1$ due to the continuity of $u_{\varepsilon t}$ by Lemma \ref{l2.1}, Lemmas \ref{l3.3} and \ref{l4.5} for $v_\varepsilon$ and assumption \eqref{h2} for $f_t$. Thus, by linear elliptic estimates
    \begin{equation}\label{5.3}
        ||v_{\varepsilon t}(\cdot,t)||_{W^{2,p}(\Omega)} \leq C ||h_\varepsilon(\cdot,t)||_{L^p(\Omega)}, \quad \text{for any } p >1 \text{ and all } t\in (0,T_{\max, \varepsilon}), 
    \end{equation}
    where $C = C(\varepsilon,p)>0.$ To prove the time continuity of $v_{\varepsilon t}$, given $h>0$, we take differences in equation \eqref{5.2} as in Lemma \ref{l4.3}, resulting in
    \begin{equation*}
    \begin{split}
    -\Big(v_{\varepsilon t}&(x,t+h)-v_{\varepsilon t}(x,t) \Big)_{xx} + u_\varepsilon(x,t+h) \Big(v_{\varepsilon t}(x,t+h)-v_{\varepsilon t}(x,t) \Big) \\[1.5ex]
     &= h_\varepsilon(x,t+h) - h_\varepsilon(x,t) + \big(u_{\varepsilon t}(x,t+h)-u_{\varepsilon t}(x,t) \big) v_{\varepsilon t}(x,t), \quad x \in \Omega, t \in (0,T_{\max,\varepsilon} - h).
    \end{split}
    \end{equation*}
    The left-hand side corresponds this time to $A_\varepsilon(t+h)\big(v_{\varepsilon t}(x,t+h)-v_{\varepsilon t}(x,t) \big)$, where again the strict positivity of $u_{\varepsilon}$ results in the invertibility of the operator. By the same previous elliptic estimate, we have
    $$
    \big|\big|v_{\varepsilon t}(\cdot,t+h)-v_{\varepsilon t}(\cdot,t)\big|\big|_{W^{2,p}(\Omega)} \leq C_2 \big|\big| h_\varepsilon(\cdot,t+h) - h_\varepsilon(\cdot,t) + \big(u_{\varepsilon t}(\cdot,t+h)-u_{\varepsilon t}(\cdot,t) \big) v_{\varepsilon t}(\cdot,t)\big|\big|_{L^p(\Omega)},
    $$
    for any $p>1$ and some $C_2 = C_2(\varepsilon, p)>0$. Due to the continuity of $h_\varepsilon$, we have that
    $$
    \big|\big| h_\varepsilon(\cdot,t+h) - h_\varepsilon(\cdot,t)\big|\big|_{L^p(\Omega)} \to 0 \quad \text{as } h \to 0,
    $$
    while for the second term, using Hölder's inequality, taking $q,r>1$ such that $\frac{1}{p} = \frac{1}{q} + \frac{1}{r}$, we obtain
    $$
    \big|\big| \big(u_{\varepsilon t}(\cdot,t+h)-u_{\varepsilon t}(\cdot,t) \big) v_{\varepsilon t}(\cdot,t)\big|\big|_{L^p(\Omega)} \leq \big|\big|  u_{\varepsilon t}(\cdot,t+h)-u_{\varepsilon t}(\cdot,t) \big)\big|\big|_{L^q(\Omega)} \cdot  \big|\big| v_{\varepsilon t}(\cdot,t)\big|\big|_{L^r(\Omega)}.
    $$
    By \eqref{5.3}, $||v_{\varepsilon t}(\cdot,t)||_{L^r(\Omega)}$ is bounded, while the time continuity of $u_{\varepsilon t}$ implies that $||u_{\varepsilon t}(\cdot,t+h)-u_{\varepsilon t}(\cdot,t) ||_{L^q(\Omega)} \to 0$ as $h\to 0$. Thus, $v_{\varepsilon t} \in C^0\big((0,T_{\max, \varepsilon}); W^{2,p}(\Omega)\big)$ for all $p>1$.
    \\\\
    Lastly, as in dimension one, for any $p > 1/2$, $W^{2,p}(\Omega) \hookrightarrow C^{0,\alpha}(\Omega)$, for $\alpha = 2-1/p$, we can finally conclude $v_{\varepsilon t} \in C^0\big(\Omega \times (0,T_{\max,\varepsilon})\big)$, entailing the desired $C^1$ regularity for $D_\varepsilon$.
    \\\\
    In this way, Lemma A.1 in \cite{TaoWinklerA1} becomes applicable, and a Moser iteration argument leads to
    $$\sup_{t\in(0,T_{\max,\varepsilon})} ||u_\varepsilon(\cdot,t)||_{L^\infty(\Omega)}<\infty,$$
    contradicting the hypothesis that $T_{\max,\varepsilon} < \infty$.
\end{proof}
Lastly, with all the previous results, a subsequence converging to the global weak solution of system \eqref{1.1} can be extracted by standard compactness arguments.
\begin{lemma}\label{l5.2}
    Let $p> 1$, $u_0$ and $f$ be such that \eqref{h1} and \eqref{h2} hold. Then, there exists a subsequence $(\varepsilon_j)_{j \in \mathbb{N}} \subset (0,1)$ with $\varepsilon_j \searrow 0$ as $j \to \infty$ and
    \begin{equation}\label{fin1}
    \begin{cases}
        u \in L^\infty_{\text{loc}}\big((0,\infty); L^r(\Omega)\big) ~ \text{ for all } r \geq 1, \\[1.5 ex]
        v \in C^0_\text{loc}(\bar{\Omega} \times [0,\infty)\big) \cap L^\infty_\text{loc}\big((0,\infty); C^{1,\alpha}(\Omega)\big),
    \end{cases}   
    \end{equation}
    with $u\geq 0$ and $v>0$ a.e. in $\Omega \times (0,\infty)$ satisfying
    \begin{align}    
         &u_{\varepsilon} \to u ~ \text{ a.e. in } \Omega \times (0,\infty) \text{ and in } L^q_{\text{loc}}\big(\bar{\Omega} \times [0,\infty)\big) \text{ for all } q \in [1,p)\\
         &v_\varepsilon \to v ~ \text{ a.e. in } \Omega \times (0,\infty) \text{ and in } C^0_\text{loc}(\bar{\Omega} \times [0,\infty)\big),\\
          &v_{\varepsilon x}   \overset{\ast}{\rightharpoonup} v_x ~\text{ in } L^\infty_{\text{loc}}\big(\Omega \times [0,\infty)\big),    
    \end{align}
    as $\varepsilon = \varepsilon_j$. Furthermore, the pair $(u,v)$ forms a global weak solution to system \eqref{1.1} in the sense of Definition \ref{weak-sol}.
\end{lemma}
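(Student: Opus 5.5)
The plan is a compactness argument run on the bounds from Sections 2--4. Lemma \ref{l5.1} makes all bounds hold on every $(0,T)$, $T>0$. I will construct the subsequence on $(0,T_k)$ with $T_k=k$ and then take a diagonal subsequence.

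\textbf{Compactness for $v_\varepsilon$.} On $\bar\Omega\times[0,T]$, Lemma \ref{l3.4} gives a uniform bound for $\|v_\varepsilon(\cdot,t)\|_{C^{1,\alpha}(\Omega)}$, so the family is uniformly bounded and equicontinuous in $x$. Lemma \ref{l4.5} gives uniform Hölder continuity in $t$. Hence Arzelà--Ascoli yields $v_\varepsilon\to v$ in $C^0(\bar\Omega\times[0,T])$. Since $\|v_{\varepsilon x}\|_{L^\infty(\Omega\times(0,T))}$ is bounded, Banach--Alaoglu gives $v_{\varepsilon x}\overset{\ast}{\rightharpoonup} w$ in $L^\infty$, and uniform convergence of $v_\varepsilon$ identifies $w=v_x$. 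The bound $v\in L^\infty_{\rm loc}((0,\infty);C^{1,\alpha})$ passes to the limit by lower semicontinuity, or by applying Arzelà--Ascoli in $C^1$ for each fixed $t$. Positivity $v\geq c_1(T)>0$ follows from the $\varepsilon$-independent lower bound of Lemma \ref{l3.3}.

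\textbf{Compactness for $u_\varepsilon$.} This is the main obstacle, since there is no direct bound on $u_{\varepsilon x}$. I will fix $p>1$ and set $w_\varepsilon:=u_\varepsilon^{(p+1)/2}$. By Lemmas \ref{l2.3} and \ref{l4.1}, $w_\varepsilon$ is bounded in $L^2((0,T);W^{1,2}(\Omega))$. By Lemma \ref{l4-aux}, $\partial_t w_\varepsilon$ is bounded in $L^1((0,T);(W^{3,2}(\Omega))^*)$. The Aubin--Lions lemma in its Simon version, with $W^{1,2}\Subset L^2\hookrightarrow (W^{3,2})^*$, gives $w_\varepsilon\to w$ in $L^2(\Omega\times(0,T))$ along a subsequence and a.e. after a further extraction. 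Setting $u:=w^{2/(p+1)}$, I obtain $u_\varepsilon\to u$ a.e.

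The $L^q$ convergence then follows from Vitali's theorem, using the uniform $L^r$ bounds of Lemma \ref{l2.3} for all $r$. These bounds also give $u\in L^\infty((0,T);L^r)$ by Fatou, and $u\geq0$.

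\textbf{Passage to the limit in the weak formulation.} For \eqref{2.03}, I first rewrite the regularized equation with test function $\varphi\in C_0^\infty(\bar\Omega\times[0,\infty))$ satisfying $\varphi_x=0$ on $\partial\Omega$; if necessary I take the admissible class in this form. The diffusion term $\int u_\varepsilon v_\varepsilon u_{\varepsilon x}\varphi_x$ equals $\frac12\int (u_\varepsilon^2)_x v_\varepsilon\varphi_x$. Integrating by parts once more gives
$-\frac12\int u_\varepsilon^2 v_{\varepsilon x}\varphi_x-\frac12\int u_\varepsilon^2 v_\varepsilon\varphi_{xx}$, which are exactly the terms in \eqref{2.03}. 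Every term then has the form (product of $u_\varepsilon$ powers, converging strongly in $L^2_{\rm loc}$) times (factors in $v_\varepsilon$ or $v_{\varepsilon x}$, bounded in $L^\infty$). The $u_\varepsilon^2 v_\varepsilon$ and $u_\varepsilon v_\varepsilon$ terms converge by strong $\times$ uniform convergence. The $u_\varepsilon^2 v_\varepsilon v_{\varepsilon x}$ and $u_\varepsilon^2 v_{\varepsilon x}$ terms converge by strong $L^1$ $\times$ weak-$\ast$ $L^\infty$ convergence. This requires $u_\varepsilon^2\to u^2$ in $L^1_{\rm loc}$, which holds by choosing $p>2$ in the $q$-range. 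The initial term converges because $u_0+\varepsilon\to u_0$.

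For \eqref{2.04}, I integrate the regularized elliptic identity against $\varphi\zeta(t)$ and pass to the limit in the same way; this yields the identity for a.e. $t$. The integrability requirements \eqref{2.01}--\eqref{2.02} follow from the bounds already established.
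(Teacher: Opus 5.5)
Your proposal is correct and follows essentially the same route as the paper: Arzelà--Ascoli (via Lemmas \ref{l3.4} and \ref{l4.5}) plus Banach--Alaoglu for $v_\varepsilon$, Aubin--Lions applied to $u_\varepsilon^{(p+1)/2}$ using Lemmas \ref{l2.3}, \ref{l4.1} and \ref{l4-aux}, Vitali's theorem for $u_\varepsilon$, and then a term-by-term passage to the limit. Your extra points go beyond what the paper's proof states, and all are sound: you note that the regularized identity behind \eqref{2.03} only holds for test functions with $\varphi_x=0$ on $\partial\Omega$ (otherwise the boundary term $\frac12 u_\varepsilon^2 v_\varepsilon\varphi_x$ survives the second integration by parts), you identify the weak-$\ast$ limit of $v_{\varepsilon x}$ as $v_x$, and you use the $\zeta(t)$ argument to obtain \eqref{2.04} for a.e. $t$.
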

\begin{proof}
    We begin by the convergence of $u_\varepsilon$. As previously mentioned, due to the lack of direct information regarding $u_{\varepsilon x}$, we consider the auxiliary sequence $z_\varepsilon := u_\varepsilon^{\frac{p+1}{2}}$ for $p\geq 2$. For any $T>0$, by combining Lemma \ref{l2.3} and Lemma \ref{l4.1} it follows that
    $$
    (z_\varepsilon)_{\varepsilon\in(0,1)}~ \text{ is bounded in } L^2\big((0,T); W^{1,2}(\Omega)\big),
    $$
    while by Lemma \ref{l4-aux}, we also have that
    $$
    (z_{\varepsilon t})_{\varepsilon\in(0,1)} ~ \text{ is bounded in } L^1\big((0,T); (W^{3,2}(\Omega))^*\big).
    $$
    Thus, using the Aubin-Lions lemma \cite{aubin-lions}, there exists $(\varepsilon_{j_1})_{{j_1}\in \mathbb{N}} \subset (0,1)$, with $\varepsilon_{j_1} \to 0$ as $j_1 \to \infty$ and a function 
    $$z \in L^2_\text{loc}\big([0,\infty); W^{1,2}(\Omega)\big),$$
    such that
    \begin{equation}\label{5.4}
        z_{\varepsilon} \to z ~ \text{ a.e. in } \Omega \times (0,\infty) \text{ and in } L^2_{\text{loc}}\big(\bar{\Omega} \times [0,\infty)\big),
    \end{equation}
    as $\varepsilon = \varepsilon_{j_1}$. Therefore, as for all $p \geq 2$ the map $s \mapsto s^\frac{2}{p+1}$ is continuous and monotone, we can define 
    $$u := z^{\frac{2}{p+1}} \in L^\infty_\text{loc}\big((0,\infty); L^r(\Omega)\big), \quad \text{for all } r \geq 1,$$
    such that 
    $$
    u_\varepsilon \to u ~ \text{ a.e. in } \Omega \times (0,\infty) \text{ and in } L^q_{\text{loc}}\big(\bar{\Omega} \times [0,\infty)\big) \text{ for all } q \in [1,p),
    $$
    for $\varepsilon = \varepsilon_{j_1}$, as a consequence of the Vitali convergence theorem.
    \\\\
    For $v_\varepsilon$ we employ the Arzelà-Ascoli theorem. From Lemma \ref{l3.3}, we know that
    $$
    (v_\varepsilon)_{\varepsilon \in (0,1)} ~\text{ is bounded in } L^\infty\big(\Omega \times [0,T)\big),
    $$
    for any $T>0$. Moreover, the bound for $||v_{\varepsilon x}||_{L^\infty(\Omega)}$ from Lemma \ref{l3.4} gives uniform Lipschitz continuity in space, while Lemma \ref{l4.5} provides uniform Hölder continuity in time. Thus, the sequence $(v_\varepsilon)_{\varepsilon \in (0,1)}$ is uniformly equicontinuous, and from the Arzelà-Ascoli theorem we have
    $$
    (v_\varepsilon)_{\varepsilon \in (0,1)} ~\text{ is relatively compact in } C^0_\text{loc}\big(\bar{\Omega} \times (0,\infty)\big).
    $$
    Moreover, from Lemma \ref{l3.4}, we also have boundedness of $v_\varepsilon$ in $L^\infty_\text{loc}\big((0,\infty); C^{1,\alpha}(\Omega)\big)$. As these bounds are preserved when restricted to the previous subsequence $(\varepsilon_{j_1})_{j_1 \in \mathbb{N}}$, a second subsequence $(\varepsilon_{j_2})_{{j_2}\in \mathbb{N}} \subset (\varepsilon_{j_1})_{{j_1}\in \mathbb{N}} \subset (0,1)$, with $\varepsilon_{j_2} \to 0$ as $j_2 \to \infty$ can be extracted, as well as a function 
    $$
    v \in C^0_\text{loc}(\bar{\Omega} \times [0,\infty)\big) \cap L^\infty_\text{loc}\big((0,\infty); C^{1,\alpha}(\Omega)\big),
    $$
    such that
    \begin{equation}\label{5.6}
        v_\varepsilon \to v ~ \text{ a.e. in } \Omega \times (0,\infty) \text{ and in } C^0_\text{loc}(\bar{\Omega} \times [0,\infty)\big),
    \end{equation}
    for $\varepsilon = \varepsilon_{j_2}$. Correspondingly, another application of the Banach-Alaoglu theorem entails
    \begin{equation}\label{5.7}
         v_{\varepsilon x}   \overset{\ast}{\rightharpoonup} v_x ~\text{ in } L^\infty_{\text{loc}}\big(\Omega \times [0,\infty)\big),
    \end{equation}
    for $\varepsilon = \varepsilon_{j_2}$.
    \\\\
    To finish the proof, we finally check that $(u,v)$ in fact form a weak solution to system \eqref{1.1} in the sense of Definition \ref{weak-sol}. From \eqref{fin1}, it is direct to see that \eqref{2.01} and \eqref{2.02} hold. For the weak formulation, as for every $\varepsilon \in (0,1)$, $(u_\varepsilon, v_\varepsilon)$ are the unique classical solution to the regularized system \eqref{2.1}, in particular for any $\varphi \in C_0^\infty (\bar{\Omega} \times [0, \infty))$ they satisfy
    \begin{equation*}
    \begin{split} 
     \int_0^\infty \int_\Omega u_\varepsilon \varphi_t + \int_\Omega (u_0 +\varepsilon)\varphi(\cdot, 0) =  -\frac{1}{2}\int_0^\infty \int_\Omega u_\varepsilon^2 v_{\varepsilon x} \varphi_x - \frac{1}{2}\int_0^\infty \int_\Omega u_\varepsilon^2 v_\varepsilon \varphi_{xx} \\[1.5ex]
     - \int_0^\infty \int_\Omega u_\varepsilon^2 v_\varepsilon v_{\varepsilon x} \varphi_x  - \int_0^\infty \int_\Omega u_\varepsilon v_\varepsilon \varphi,
\end{split}
\end{equation*}
as well as
\begin{equation*}
    \int_\Omega v_{\varepsilon x}(\cdot,t) \varphi_x(\cdot,t) +\int_\Omega u_\varepsilon(\cdot,t) v_\varepsilon(\cdot,t) \varphi(\cdot,t) = \int_\Omega f(\cdot,t) \varphi(\cdot,t), \quad \text{for a.e. } t >0.
\end{equation*}
Therefore, by passing to the limit with $\varepsilon = \varepsilon_{j_2} \searrow 0$, using the convergence established in \eqref{5.4}-\eqref{5.7} we deduce that $(u,v)$ form a global weak solution to system \eqref{1.1} in the considered sense.
\end{proof}
Proof of Theorem \ref{t1}. The result directly follows from Lemma \ref{l5.2}.

\section*{Acknowledgments}
This work was partially supported by Grant FPU23/03170 from the Spanish Ministry of Science, Innovation and Universities, as well as by a DAAD Research Grant - Bi-nationally Supervised Doctoral Degrees/Cotutelle, 2024/25 (57693451), Grant number 91907995 (F.H.-H.)


\begin{thebibliography}{1}

\bibitem{Bellomo} N. Bellomo, A. Bellouquid, Y. Tao, M. Winkler. \textit{Toward a mathematical theory of Keller-Segel models of pattern formation in biological tissues.} {Math. Model. Methods Appl. Sci.}, 25, 1663–1763, (2015).

\bibitem{BudreneBerg} E.O. Budrene, H.C. Berg. \textit{Complex patterns formed by motile cells of Escherichia coli.} {Nature}, 349, (1991).

\bibitem{fractal2} E. Ben-Jacob, O. Schochet, A. Tenenbaum, I. Cohen, A. Czirók, T. Vicsek. \textit{Generic modelling of cooperative growth patterns in bacterial colonies}. Nature, 368 46–49 (1994).

\bibitem{gilbard-trudinger} D. Gilbard, N.S. Trudinger. \textit{Elliptic Partial Differential Equations of Second Order}. Springer Classics in Mathematics, (1998).

\bibitem{Fujikawa89} H. Fujikawa, M. Matsushita. \textit{Fractal growth of Bacillus subtilis on agar plates.} {J. Phys. Soc. Japan}, 47, (1989).

\bibitem{Fujikawa92} H. Fujikawa. \textit{Periodic growth of Bacillus subtilis colonies on agar plates.} {Physica A}, 189, (1992).

\bibitem{mah} M.A. Herrero, J.L.L. Velázquez. \textit{A blow-up mechanism for a chemotaxis model.} {Ann. Sc. Norm. Super. Pisa Cl. Sci.}, 24, (1997).

\bibitem{HH-Cauchy} F. Herrero-Hervás. \textit{Global weak solutions to a doubly degenerate nutrient taxis system on the whole real line.} Preprint, 
https://doi.org/10.48550/arXiv.2508.07503, (2025).

\bibitem{Hillen} T. Hillen, K.J. Painter. \textit{A users guide to PDE models for chemotaxis}. {Journal of Mathematical Biology}, 58, 183–217, (2009).
    
\bibitem{Horstmann} D. Horstmann.  \textit{From 1970 until present: the Keller-Segel model in chemotaxis and its consequences}, Jahresbericht der Deutschen Mathematiker-Vereinigung. 105 (3),  103-165, (2003).

\bibitem{Kawasaki} K. Kawasaki, A. Mochizuki, M. Matsushita, T. Umeda, N. Shigesada. \textit{Modeling spatio-temporal patterns generated by Bacillus subtilis.} {J. Theor. Biol.}, 188, (1997).

\bibitem{KS1} E.F. Keller, L.A  Segel \textit{Initiation of slime mold aggregation viewed as an instability}. J. Theoret. Biol. {\bf{26}},  399-415, (1970).
	
\bibitem{KS2} E.F. Keller, L.A  Segel \textit{A model for chemotaxis.} J. Theoret. Biol. 30,   225-234, (1971).

\bibitem{li-wink} G. Li, M. Winkler. \textit{Nonnegative solutions to a doubly degenerate nutrient taxis system}. Commun. Pure Appl. Anal,  21 (2), 687-704 (2022)

\bibitem{winkler-li-logist} G. Li, M. Winkler. \textit{Continuous solutions for a two-dimensional cross-diffusion problem involving doubly degenerate diffusion and logistic proliferation}. Analysis and Applications, 23(4), 489-510 (2025).

\bibitem{plaza13} J.F. Leyva, C. Málaga, R.G. Plaza. \textit{The effects of nutrient chemotaxis on bacterial aggregation patterns
with non-linear degenerate cross diffusion.} Physica A 392, (2013).

\bibitem{Fujikawa90} M. Matsushita, H. Fujikawa. \textit{Diffusion-limited growth in bacterial colony formation.} {Physica A}, 168, (1990).

\bibitem{fractal1} T. Matsuyama, M. Matsushita. \textit{Fractal morphogenesis by a bacterial cell population}. Crit. Rev. Microbiol., 19 (2) 117–135 (1993).

\bibitem{logistic} X. Pan. \textit{Superlinear degradation in a doubly degenerate nutrient taxis system.} Nonlinear Anal.-Real World Appl. 77, 104040 (2024)

\bibitem{Tao11} Y. Tao. \textit{Boundedness in a chemotaxis model with oxygen consumption by bacteria.} {J. Math. Anal. Appl.}, 381 (2), (2011).

\bibitem{TaoWinkler12} Y. Tao, M. Winkler. \textit{Eventual smoothness and stabilization of large-data solutions in a three-dimensional chemotaxis system with consumption of chemoattractant.} {J. Differ. Equ.}, 252 (3), (2012).

\bibitem{TaoWinklerA1} Y. Tao, M. Winkler. \textit{Boundedness in a quasilinear parabolic-parabolic Keller-Segel system with subcritical sensitivity.} {J. Differ. Equ.}, 252(1), (2012).

\bibitem{TaoWinkler2019} Y. Tao, M. Winkler. \textit{Global smooth solvability of a parabolic-elliptic nutrient taxis system in domains of arbitrary dimension.} {J. Differ. Equ.}, 267, (2019).

\bibitem{aubin-lions} R. Temam, \textit{Navier-Stokes Equations. Theory and Numerical Analysis}. Stud. Math. Appl., Vol. 2, North-Holland, Amsterdam (1977)

\bibitem{winkler1d} M. Winkler. \textit{Does spatial homogeneity ultimately prevail in nutrient taxis systems? A paradigm for structure support by rapid diffusion decay in an autonomous parabolic flow}. Trans. Amer. Math. Soc. 374, 219-268 (2021).

\bibitem{winkler2d} M. Winkler. \textit{Small-signal solutions of a two-dimensional doubly degenerate taxis system modeling bacterial motion in nutrient-poor environments.} Nonlinear Anal.-Real World Appl. 63, 103407 (2022).

\bibitem{winkler-l-inf-2d} M. Winkler. \textit{$L^\infty$ bounds in a two-dimensional doubly degenerate nutrient taxis system with general cross-diffusive flux}. J. Differential Eq. 400, 423-456 (2024).

\bibitem{yuting} Y. Xiang. \textit{Boundedness and large-time behavior in a two-species doubly degenerate diffusion chemotaxis system with logistic proliferation} Nonlinear Anal.-Real World Appl. 89, 104525 (2026).

\bibitem{Wu16} S. Wu, J. Shi, B. Wu. \textit{Global existence of solutions and uniform persistence of a diffusive predator--prey model with prey-taxis.} {J. Differ. Equ.}, 260 (7), (2016).

\bibitem{Duan} D. Wu. \textit{The asymptotic behavior of solutions to a doubly degenerate chemotaxis-consumption system in the two-dimensional setting.} Preprint 
https://doi.org/10.48550/arXiv.2409.12083 (2024)

\bibitem{ZhangLi} Q. Zhang, Y. Li. \textit{Stabilization and convergence rate in a chemotaxis system with consumption of chemoattractant.} {J. Math. Phys.}, 56(8), (2015).

\bibitem{ZL1} Z. Zhang, Y. Li. \textit{Boundedness in a two-dimensional doubly degenerate nutrient taxis system}. Math. Meth. Appl. Sci. Online ready doi.org/10.1142/S0218202526500077 (2026)


\end{thebibliography}
\end{document}